\documentclass[11pt, reqno]{amsart}
\usepackage[margin=1.4in]{geometry}
\usepackage{mathtools,amsmath, amsthm, amsfonts, amssymb, latexsym, amsthm,
  amscd, cancel, enumerate, rotating, comment, appendix, 
  makecell, paralist,times,graphicx,pgf,dirtytalk,
appendix,float}
\usepackage[colorlinks=true, pdfstartview=FitV, linkcolor=blue, citecolor=blue, urlcolor=blue]{hyperref}
\usepackage[capitalise, noabbrev]{cleveref}
\usepackage{mathrsfs}
\usepackage{paralist}
\usepackage{pgf}
\usepackage{times}
\usepackage{enumitem}
\usepackage{caption}
\usepackage{subcaption}
\usepackage[T1]{fontenc} 
\usepackage{url}

\numberwithin{equation}{section}

\newtheorem{theorem}{Theorem}[section]
\newtheorem{introtheorem}{Theorem}

\newtheorem*{introtheorem*}{Main Theorem}

\newtheorem{corollary}[theorem]{Corollary}

\newtheorem{lemma}[theorem]{Lemma}
\newtheorem{proposition}[theorem]{Proposition}

\theoremstyle{definition}
\newtheorem{definition}[theorem]{Definition}
\newtheorem*{defn*}{Definition}

\newtheorem{example}[theorem]{Example}

\newtheorem{question}[theorem]{Question}

\newtheorem*{ack}{Acknowledgements}
\newtheorem{remark}[theorem]{Remark}

\numberwithin{equation}{section}

\newcommand{\lcm}{\mathrm{lcm}}
\newcommand{\LCM}{\mathrm{LCM}}
\newcommand{\Tor}{\mathrm{Tor}}
\newcommand{\im}{\mathrm{im}}

\newcommand{\supp}{\mathrm{Supp}}

\newcommand{\Ass}{\mathrm{Ass}}

\newcommand{\ba}{\mathbf{a}}
\newcommand{\bb}{\mathbf{b}}

\newcommand{\bc}{\mathbf{c}}

\newcommand{\be}{\mathbf{e}}

\newcommand{\bj}{\mathbf{1}}
\newcommand{\bu}{\mathbf{u}}

\newcommand{\bz}{\mathbf{z}}
\newcommand{\bx}{\mathbf{x}}

\newcommand{\bm}{\mathbf{m}}

\newcommand{\by}{\mathbf{y}}

\newcommand{\m}{\mathbf{m}}

\newcommand{\bma}{\m^\ba}
\newcommand{\fp}{\mathfrak{p}}
\newcommand{\fq}{\mathfrak{q}}

\newcommand{\sfk}{\mathsf k}

\DeclareMathOperator{\conv}{conv}

\newcommand{\C}{\mathcal{C}}
\newcommand{\DD}{\mathcal{D}}
\newcommand{\E}{\mathcal{E}}

\newcommand{\N}{\mathcal{N}}

\newcommand{\J}{\mathcal{J}}
\newcommand{\K}{\mathcal{K}}

\newcommand{\Ll}{\mathcal{L}}

\newcommand{\NN}{\mathbb{N}}
\newcommand{\RR}{\mathbb{R}}

\newcommand{\Pp}{\mathcal{S}}
\newcommand{\CC}{\mathscr{C}}

\newcommand{\unsure}{\theta}

\newcommand{\ssm}{\smallsetminus}

\newcommand{\Erq}{{\E_q}^r}

\newcommand{\SqI}{S_{[q],I}} 

\newcommand{\Nrq}{\N^r_q}

\newcommand{\e}{\epsilon}
\newcommand{\pme}{{\pmb{\e}}}
\newcommand{\pmea}{\pme^{\ba}}

\newcommand{\qand}{\quad \mbox{and} \quad }
\newcommand{\qor}{\quad \mbox{or} \quad }
\newcommand{\qif}{\quad \mbox{if} \quad }
\newcommand{\qfor}{\quad \mbox{for} \quad }
\newcommand{\qwhere}{\quad \mbox{where} \quad }
\newcommand{\qwith}{\quad \mbox{with} \quad }
\newcommand{\qwhen}{\quad \mbox{when} \quad }

\newcommand{\qforall}{\quad \mbox{for all} \quad }

\newcommand{\compl}[1]{#1^{\sf{c}}}

\newcommand{\st}{\colon}

\newcommand{\divides}{\mathrel{|}}

\DeclareMathOperator{\ass}{Ass}

\DeclareMathOperator{\minimalprimes}{Min}
\DeclareMathOperator{\mingens}{Mingens}
\DeclareMathOperator{\sdefect}{sdefect}
\DeclareMathOperator{\idefect}{idefect}

\newcommand{\bfemph}[1]{\textbf{#1}}
\renewcommand{\emph}[1]{\bfemph{#1}} 

\title[Symbolic powers and integral closures]{ 
Symbolic powers and integral closures via extremal ideals}

\author{Trung Chau}
\author{Art Duval}
\author{Sara Faridi}
\author{Thiago Holleben}
\author{Susan Morey}
\author{Liana \c Sega}

\definecolor{DarkGreen}{RGB}{50,203,0}

\keywords{powers of ideals; monomial ideals; free resolutions; Betti numbers; resurgence; asymptotic resurgence; symbolic powers; integral closures; normal ideals; symbolic defect; integral defect}
\subjclass[2020]{13F55; 05E40; 13D02; 13C05; 52B20}

\begin{document}
\begin{abstract} 
 This paper demonstrates that extremal ideals can be used to great effect to compute integral closures of powers and symbolic powers of square-free monomial ideals. We show that the generators of these powers are images of the generators of the corresponding powers of extremal ideals under a specific ring homomorphism.
 Extremal ideals provide sharp bounds for a variety of invariants widely studied in the literature, including resurgence, asymptotic resurgence, and symbolic defect, as well as Betti numbers of symbolic powers and of integral closures of powers of square-free monomial ideals. 
When restricted to the class of extremal ideals, algebraic computations are reduced to problems of discrete geometry and linear programming, allowing the use of a wide variety of techniques.
 As a result, in situations where computations are feasible for extremal ideals, we provide concrete sharp bounds for many of these invariants. Our methods reduce finding homological invariants and algebraic constructions for infinitely many ideals to computations for a single highly symmetric ideal, based solely on the number of generators.

\end{abstract} 
\maketitle



\section{\bf Introduction }
The behavior of powers of ideals has been a subject of great
scrutiny and multiple conjectures over the past century. There are
often ways to describe this behavior asymptotically, or for certain
classes of ideals, but in general there is much we still do not understand. A common theme with such problems is that powers of square-free monomial ideals in a polynomial ring $R$ can exhibit unexpected properties.
This paper offers a way to study symbolic powers $I^{(r)}$ and integral closures of powers $\overline{I^r}$ of {\it all} square-free monomial ideals $I$ with $q$ generators via a single ideal, namely the {\it extremal ideal} $\E_q$ (\cref{d:extremal}) introduced in \cite{Lr}. 
The principle is that all the complications faced when dealing with powers of square-free monomial ideals are concentrated in the extremal ideals $\E_q$. The  ring homomorphism $\psi_I$ (\cref{d:psi}), which is known to satisfy $\psi_I(\Erq) = I^rR$,  then offers a way to study any $I$ via extremal ideals.

\begin{introtheorem}
[\cref{thm:integral-closure-extremal,thm:symbolic-extremal}]\label[theorem]{t:intro}
If $I$  is an ideal minimally generated by $q \ge 1$ square-free monomials in a polynomial ring $R$, and $r$ is a positive integer, then
$$
\overline{I^r}= \psi_I(\overline{\Erq})R
\qand
I^{(r)}=\psi_I({\E_q}^{(r)})R.
$$
\end{introtheorem}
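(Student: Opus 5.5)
The plan is to prove both equalities one monomial at a time, using the standard monomial descriptions of the two operations. First recall the setup. The ring of $\E_q$ has variables $x_A$ for nonempty $A\subseteq[q]$, and its generators are $\e_i=\prod_{A\ni i}x_A$. For a variable $x$ of $R$, let $A(x)=\{i : x\mid m_i\}$, where $m_1,\dots,m_q$ are the generators of $I$. Then $\psi_I(x_A)=\prod_{A(x)=A}x$, which equals $1$ if no variable has $A(x)=A$, and $\psi_I(\e_i)=m_i$. Since $\psi_I$ is a monomial map, it suffices to compare monomials.

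The two descriptions are as follows.
\begin{itemize}
\item Integral closure: $u\in\overline{J^r}$ if and only if $u^k\in J^{rk}$ for some $k\ge1$.
\item Symbolic powers of a square-free monomial ideal $J$: $u\in J^{(r)}$ if and only if $\sum_{x\in C}\deg_x u\ge r$ for every minimal vertex cover $C$ of $J$, since the minimal primes of $J$ are generated by such covers.
\end{itemize}
For $\E_q$, a minimal vertex cover is a family $\mathcal C$ of nonempty subsets of $[q]$ such that every $i$ lies in some member, minimal with this property.

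\textbf{Inclusion $\supseteq$.} For the closure, if $w^k\in\Erq{}^{k}$, then $\psi_I(w)^k=\psi_I(w^k)\in\psi_I(\E_q^{rk})R=I^{rk}$, using the known fact $\psi_I(\E_q^s)=I^sR$. For symbolic powers, let $w\in\E_q^{(r)}$ and let $C$ be a minimal vertex cover of $I$. Then $\{A(x):x\in C\}$ covers $[q]$, because each $m_i$ is divisible by some $x\in C$. Hence this family contains a minimal cover $\mathcal C$ of $\E_q$. Since $\deg_x\psi_I(w)=w_{A(x)}$, we get
$$\sum_{x\in C}\deg_x\psi_I(w)\ \ge\ \sum_{A\in\mathcal C}w_A\ \ge\ r .$$

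\textbf{Inclusion $\subseteq$.} This is the main step: for a monomial $u$ in $\overline{I^r}$ or in $I^{(r)}$, I will construct a preimage $w$ with $\psi_I(w)\mid u$. Define $w=\prod_A x_A^{w_A}$ by
\[
w_A=\min\{\deg_x u: A(x)=A\},
\]
and set $w_A=N$ for a huge integer $N$ whenever no variable of $R$ has $A(x)=A$. In that case $\psi_I(x_A)=1$, so the choice of $N$ costs nothing. By construction $\psi_I(w)\mid u$.

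For the closure, suppose $u^k$ is divisible by $\prod m_i^{c_i}=\psi_I(\e^{c})$ with $\sum c_i=rk$. Then for every $x$,
\[
\deg_{x_{A(x)}}\e^{c}=\sum_{i\in A(x)}c_i=\deg_x\textstyle\prod m_i^{c_i}\le k\deg_x u .
\]
Taking the minimum over $x$ in each class gives $\e^c\mid w^k$, so $w\in\overline{\Erq}$.

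For symbolic powers, let $\mathcal C$ be a minimal cover of $\E_q$. If some $A\in\mathcal C$ has an empty class, then the sum of $w_A$ over $\mathcal C$ is already at least $N\ge r$. Otherwise, for each $A\in\mathcal C$ choose a variable $x_A$ with $A(x_A)=A$ realizing the minimum. The set $C=\{x_A\}$ is a vertex cover of $I$, since each $i$ lies in some $A\in\mathcal C$ and then $x_A\mid m_i$. Therefore
$$\sum_{A\in\mathcal C}w_A=\sum_{x\in C}\deg_x u\ge r .$$
So $w\in\E_q^{(r)}$.

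The main obstacle is this reverse inclusion, in particular defining $w$ so that it works uniformly. The key points to check are that taking minima over each class preserves divisibility, and that the empty classes and variables of $R$ dividing no $m_i$ are handled correctly.
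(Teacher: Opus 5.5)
Your proof is correct, and it takes a genuinely different route from the paper.

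The paper uses separate machinery for the two operations. For integral closures, it factors $\psi_I=\psi_1\circ\psi_2$, where $\psi_2$ sends to $1$ the variables $y_A$ with $\unsure_I(A)=\emptyset$, and $\psi_1$ substitutes square-free monomials with pairwise disjoint supports. It then applies \cref{l:int-closure-1} (setting a variable equal to $1$ commutes with integral closure) to $\psi_2$, and an external substitution result \cite[Proposition~2]{IrenaYassemi} to $\psi_1$. For symbolic powers, it uses \cref{prop:primary-decomposition-powers-variables} and \cref{t:primary-to-primary} to show that $\psi_I$ carries the irredundant irreducible primary decomposition of $\Erq$ to that of $I^r$. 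It then uses that $\psi_I$ commutes with intersections (\cref{l:Lr-extremal}(6)) and matches the components with minimal radicals.

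You instead get $\psi_I(\overline{\Erq})R\subseteq\overline{I^r}$ and $\psi_I({\E_q}^{(r)})R\subseteq I^{(r)}$ by pushing forward. For the reverse inclusions you use one explicit lift $u\mapsto w$: classwise minima of exponents over the sets $\unsure_I(A)$, and exponent $N\ge r$ on the variables killed by $\psi_I$. You then check membership of $w$ directly against $u^k\in I^{rk}$ and against the vertex-cover inequalities.

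What your approach buys:
\begin{itemize}
\item It is self-contained: no external substitution result and no primary decompositions.
\item It handles both operations uniformly.
\item It shows why the theorem holds. Each membership criterion is a family of lower bounds on exponents in which a variable $x$ of $R$ enters only through the set $A(x)$, so passing to classwise minima loses nothing.
\end{itemize}
What the paper's route buys is modularity and by-products yours does not give. Most notably, it describes the irredundant irreducible primary decomposition of $I^r$, and hence its associated primes, in terms of that of $\Erq$.

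Two cosmetic points:
\begin{itemize}
\item Avoid using $x_A$ both for the variables of $S_{[q]}$ and for the chosen variables of $R$.
\item The cover $C$ you build in the symbolic step is in fact a minimal vertex cover of $I$, since each $A\in\mathcal C$ contains an $i$ lying in no other member. Any vertex cover would do anyway, because the sum only decreases on passing to a minimal subcover.
\end{itemize}
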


In other words,  $\psi_I$ {\it preserves} integral closures of powers $\overline{I^r}$ and symbolic powers $I^{(r)}$ in the sense that $\psi_I$ maps generators onto the generating set of the corresponding powers of $I$.
Although there are algorithms to compute the monomial ideals  $\overline{I^r}$ and $I^{(r)}$, the complexity skyrockets when $r$ increases. See for example \cite{integral-closure-algorithm} for a history, \cite{Vasconcelos-survey} for a more theoretical analysis, and \cite{symbolic-survey} for a survey of problems that highlight the difficulty of the computations. 

 In view of \cref{t:intro}, to 
 compute $\overline{I^{r}}$ and $I^{(r)}$ for any square-free
 monomial ideal $I$ with $q$ generators, it suffices to do so in
 the case $I=\E_q$ and then apply the map $\psi_I$.
In this paper, we study both $\overline{\Erq}$ and ${\E_q}^{(r)}$, and then examine the implications of our results for any square-free monomial ideal $I$ generated by $q$ elements.
 
\subsubsection*{\bf Integral closure:}  Given an ideal $I$ in a ring $R$, an element $r\in R$ is called \emph{integral over $I$} if there exists a polynomial $f(x) = x^t+a_{1}x^{t-1}+\cdots + a_t=0$ with $a_i \in I^i$ such that $f(r)=0$. The set of all integral elements of $I$ is called the \emph{integral closure} of $I$ (in $R$), denoted by $\overline{I}$. Integral closure has played an important role in number theory and algebraic geometry since the work of Krull in the 1930s \cite{Krull1930,Krull1932}. Specifically, the integral closure of an ideal is used in the study of singularities, Rees algebras, blow ups, and valuations. 

The integral closure of the $r$-th power $\overline{I^r}$ is of great interest to commutative algebraists. As with ordinary powers $I^r$, integral closures of powers also exhibit asymptotic behaviors. When $I$ is a monomial ideal in a polynomial ring $R$ over a field, it is known that the integral closure $\overline{I}$ is also a monomial ideal, and it contains monomials $m$ such that $m^t\in I^t$, for some integer $t$. The problem of finding the integral closure in this case can be translated into the language of integer programming or convex polytopes. In fact, in many programming languages, e.g., Normaliz~\cite{Normaliz} or Macaulay2~\cite{M2}, the integral closure is computed using convex geometry, not commutative algebra. 

In this paper we use the fact that $\overline{I^r}$ can be calculated via an application of $\psi_I$ to $\overline{\Erq}$ by \cref{t:intro}. This point of view allows us to describe a \say{test element} $g(r,q)$ (see \cref{e:newfrq}) which can be used to test the equality $\overline{I^r}=I^r$. For low $r$ and $q$, using $g(r,q)$ proves particularly effective (see \cref{t:general-I-ic}\eqref{i:g(2,4)}). Using this approach, we show the following.

\begin{introtheorem}[\cref{c:integralclosurepsi,t:general-I-ic,thm:integral-defect}]\label{t:intro-bounds-integral-closure}
 If $I$  is an ideal minimally generated by $q \ge 1$ square-free monomials  $m_1,\ldots,m_q$ in a polynomial ring $R$, and $r$ is a positive integer, then 
    \begin{enumerate}
        \item   $\overline{I^r}=I^r+        
    \psi_I(\overline{\Erq}\smallsetminus  \Erq)R$;
    \item  $\overline{I^r}\ne I^r$ if  $\psi_I(g(r,q))\notin I^r$ for $r \ge 2$ and $q \ge 4$; 
       \item $\overline{I^r}=I^r$ if $q \leq 3$;
 \item if $q=4$, then the following are equivalent:
 \begin{enumerate}
 \item $\overline{I^s}=I^s$ for all $s\ge 1$;
 \item $\overline{I^2}=I^2$;  
 \item $\gcd(m_a,m_b)\mid \lcm(m_c,m_d)$ for some $\{a,b,c,d\}=\{1,2,3,4\}$.
\end{enumerate}
\item  if $q =5$, then the following are equivalent:
\begin{enumerate}
\item $\overline{I^s}=I^s$ for all $s\ge 1$;
\item $\overline{I^2}=I^2$ and  $\overline{I^{3}}=I^3.$
\end{enumerate}
\end{enumerate}
\end{introtheorem}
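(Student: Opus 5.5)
The plan rests on \cref{t:intro}, $\overline{I^r}=\psi_I(\overline{\Erq})R$, together with the known identity $\psi_I(\Erq)=I^rR$. Part (1) is then formal. The minimal generators of $\overline{\Erq}$ split into those lying in $\Erq$ and those in $\overline{\Erq}\smallsetminus\Erq$, and applying $\psi_I$ to the first group lands in $I^r$. Part (2) follows the same way. I will show that the test element $g(r,q)$ lies in $\overline{\Erq}$; I expect this to be verified by exhibiting a power $g(r,q)^t\in {\E_q}^{rt}$ directly, or via the Newton polyhedron description. Then $\psi_I(g(r,q))\in\overline{I^r}$, so if it is not in $I^r$ the two ideals differ.

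For the positive statements I will work inside the extremal ideal, where the question becomes convex geometry. $\E_q$ is square-free with variables $x_A$ indexed by nonempty subsets $A\subseteq[q]$, and $\epsilon_i=\prod_{A\ni i}x_A$. A monomial $x^{\mathbf c}$ lies in $\overline{\Erq}$ iff $\mathbf c$ lies in $r\cdot\mathrm{NP}(\E_q)$. It lies in $\Erq$ iff $\mathbf c\ge \sum_i a_i\,\mathbf e(\epsilon_i)$ for some integer vector $\mathbf a$ with $|\mathbf a|=r$. So $\overline{\Erq}=\Erq$ for all $r$ is exactly normality of $\E_q$, i.e.\ the integer decomposition property for the polyhedron $\{\mathbf c\ge \sum \lambda_i\,\mathbf e(\epsilon_i)\}$. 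For part (3), with $q\le 3$, I will argue via total unimodularity. The constraints read $c_A\ge\sum_{i\in A}\lambda_i$ with $\sum\lambda_i=r$. Set $\lambda_1=r-\lambda_2-\lambda_3$ and check that the resulting constraint matrix, with rows indexed by subsets of $\{1,2,3\}$, is totally unimodular, or check integrality of vertices by hand for the seven subsets. Hence any real solution can be rounded to an integer one, and part (1) then gives $\overline{I^r}=I^r$. An alternative route is to use that $\E_3$ is a matroidal or otherwise known-normal ideal, but I would first try the direct LP argument.

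For parts (4) and (5), with $q=4,5$, I will first compute $\overline{\E_4^r}$ (resp.\ $\E_5$). I expect it to be generated over $\E_4^r$ by products of $g(2,4)$ with elements of $\E_4^{r-2}$; this is the kind of statement a Hilbert basis computation in Normaliz would certify. In the $q=5$ case the analogous extra generators should come from degrees $2$ and $3$. Granting this, (4)(a)$\Leftrightarrow$(b) follows from part (1): all new generators of $\overline{I^r}$ are multiples of $\psi_I(\overline{\E_4^2}\smallsetminus\E_4^2)$ by elements of $I^{r-2}$. Part (5) is handled the same way.

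For (4)(b)$\Leftrightarrow$(c) I will compute $\psi_I(g(2,4))$ explicitly as a product of gcd's and lcm's. I expect something like $\gcd(m_a,m_b)\,\lcm(m_c,m_d)$ summed over symmetric choices, compared against the products $m_im_j$. The divisibility $\gcd(m_a,m_b)\mid\lcm(m_c,m_d)$ should be exactly what makes this element divisible by some $m_im_j$, via squarefreeness and variable-by-variable comparison of the sets $A$ of generators each variable divides. The main obstacle is the structural claim that the integral closures of powers of $\E_4$ and $\E_5$ are generated in degrees at most $2$ (resp.\ $3$) over the ordinary powers. I would attack this through a reduction lemma: if $x^{\mathbf c}\in\overline{\Erq}$ with $r$ large, some $\epsilon_i$ divides it with quotient in $\overline{\E_q^{r-1}}$. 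This is a Carathéodory-type argument on the Newton polyhedron, supplemented by machine computation of the finitely many Hilbert basis elements.
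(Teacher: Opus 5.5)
Your proposal is correct. For parts (1), (2) and the computational core of (4)--(5) it matches the paper. The paper checks $g(2,q)^q\in\E_q^{2q}$ with $\ba=(2,\dots,2)$, and it relies on Macaulay2 for $\overline{\E_4^2}=\E_4^2+(g(2,4))$ and for $\overline{\E_q^{q-1}}=\overline{\E_q^{q-2}}\,\E_q$ when $q=4,5$.

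The differences are in the reductions.

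For (3), the paper invokes Singla's theorem with $\ell(\E_3)=3$ to reduce to $r=2$, then does a case analysis of the inequalities $b_A\ge\sum_{i\in A}\alpha_i$. Your total-unimodularity argument is more self-contained and handles all $r$ at once. After substituting $\lambda_1=r-\lambda_2-\lambda_3$, every constraint row is one of $0,\pm(1,0),\pm(0,1),\pm(1,1)$; this includes the rows coming from $\lambda_i\ge 0$, which you should list explicitly. Hence the feasible polygon has integer vertices. The elimination is essential: the full subset matrix is not totally unimodular, since the rows for $\{1,2\},\{1,3\},\{2,3\}$ give determinant $-2$. For $q=4$ this triangle survives elimination, consistent with the fractional witness $\lambda=(\tfrac12,\tfrac12,\tfrac12,\tfrac12)$ for $g(2,4)$.

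For (4)--(5), your reduction lemma can be made precise as a pigeonhole argument. If $r\ge q$, some $\lambda_i\ge 1$. Subtracting $\mathbf e_i$ from $\lambda$ and the exponent vector of $\e_i$ from $\mathbf c$ then gives $\overline{\Erq}=\E_q\,\overline{\E_q^{\,r-1}}$. This elementary argument replaces Singla's theorem, which the paper applies with $\ell(I)\le q$, and it yields the same bound $r\le q-1$. No argument of this type reaches $r\le q-2$, however. So $\overline{\E_4^3}=\E_4\overline{\E_4^2}$ and $\overline{\E_5^4}=\E_5\overline{\E_5^3}$ must still be obtained by computation, as in the paper.

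Lastly, in (4)(b)$\Leftrightarrow$(c) there is no ``sum'': $\psi_I(g(2,4))$ is a single monomial. For $i\ne j$, the exponent vector of $\e_i\e_j$ exceeds that of $g(2,4)$ only at $y_{ij}$ (2 versus 1). Since the $\psi_I(y_A)$ have disjoint supports, $m_im_j\mid\psi_I(g(2,4))$ holds if and only if $\psi_I(y_{ij})=1$; the case $i=j$ is subsumed. This condition is equivalent to $\gcd(m_i,m_j)\mid\lcm(m_c,m_d)$, where $\{c,d\}=[4]\smallsetminus\{i,j\}$, which is exactly the paper's argument.
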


\subsubsection*{\bf Symbolic powers:} 
The \emph{$r$-th symbolic power} of an ideal $I$ (of a polynomial ring $R$) is defined as 
\[
I^{(r)} =\bigcap_{P\in \minimalprimes(I)} (I^r R_P \cap R).
\]
For a prime ideal $P$ (in a polynomial ring over a field) and an integer $r$, the $r$-th symbolic power of $P$ consists of polynomials that vanish to order at least $r$ on the variety defined by $P$. In general, $I^{r}\subseteq I^{(r)}$, but equality does not always hold. Symbolic powers are often studied in geometry, where $\Ass(I)$ is used in place of $\minimalprimes(I)$ in the definition; however, $\Ass(I)=\minimalprimes(I)$ for square-free monomial ideals and so the two definitions of symbolic powers converge in our setting.  Symbolic powers directly connect commutative algebra and algebraic geometry. They have appeared as a fundamental tool in birational geometry, singularity theory, and intersection theory. 

In commutative algebra, symbolic powers form a topic that is interesting and useful in its own right. They have appeared as auxiliary tools in several important results in commutative
algebra, including Krull's Principal Ideal Theorem and Chevalley's Lemma \cite{HKV1995,Milne2020PrimerCA}. Symbolic powers of monomial ideals can be translated to the language of integer programming, as with their integral closure counterparts. 

We obtain an analog for \cref{t:intro-bounds-integral-closure} for symbolic powers of square-free monomial ideals.  We show that for each $q$, there is a test element $h(r,q)$, defined in \cref{p:not normally torsion free}, that can identify key properties of symbolic powers. Similar to integral closures of powers, the \say{test element} $h(r,q)$ proves particularly effective for low $q$ and $r$ (see \cref{ex:p3}). We provide a complete explicit description of all minimal generators of ${\E_q}^{(2)}$ for any $q$, and consequently obtain a set of generators for $I^{(2)}$ for any square-free monomial ideal $I$. 

\begin{introtheorem}[\cref{c:symbolic=ordinary,c:E2-symbolic,t:general-I-symbolic}]\label{t:intro-bounds-symbolic-powers}
  If $I$  is an ideal minimally generated by $q \ge 1$ square-free monomials in a polynomial ring $R$, and $r$ is a positive integer, then
    \begin{enumerate}
        \item   $I^{(r)}=I^r+        
    \psi_I({\E_q}^{(r)}\smallsetminus  \Erq)R$;
                \item $I^{(r)}\ne I^r$ if $\psi_I(h(r,q))\notin I^r$ for $r\geq 2$ and $q\geq 3$;
    \item $I^{(r)}=I^r$ for  $q\leq 2$;
    \item $I^{(2)}= \big( \psi_I(f_X  ) \colon \emptyset \neq X\subseteq [q] \big )$, where $f_X$ is as in \cref{eq:f-defn}.
\end{enumerate}
\end{introtheorem}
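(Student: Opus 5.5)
Since the statement is a summary of results proved later, the plan is to derive each part from \cref{t:intro}, i.e. $I^{(r)}=\psi_I({\E_q}^{(r)})R$. We use the standard facts that $\E_q$ is square-free (so ${\E_q}^{(r)}$ is the intersection of the $r$-th powers of its minimal primes) and that $\psi_I(\Erq)R=I^r$.

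\textbf{Part (1).} Write ${\E_q}^{(r)}=\Erq+({\E_q}^{(r)}\ssm\Erq)$ at the level of monomial generators. Applying $\psi_I$ and \cref{t:intro} gives
\[
I^{(r)}=\psi_I(\Erq)R+\psi_I({\E_q}^{(r)}\ssm\Erq)R=I^r+\psi_I({\E_q}^{(r)}\ssm\Erq)R .
\]

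\textbf{Part (2).} The intended argument is that $h(r,q)$ is a monomial of ${\E_q}^{(r)}$. To show this, we check for each minimal prime $P$ of $\E_q$ that $h(r,q)\in P^r$, i.e.\ that the degree of $h(r,q)$ in the variables of $P$ is at least $r$. The minimal primes of $\E_q$ are generated by the variables indexed by subsets meeting every generator, so this is a combinatorial count. Then $\psi_I(h(r,q))\in I^{(r)}$ by \cref{t:intro}, and if it is not in $I^r$ we conclude $I^{(r)}\neq I^r$. The requirement $q\ge 3$ is needed because $h(r,q)$ is built from three generators, in the style of the triangle monomial $x_{12}x_{13}x_{23}\in(x_{12},x_{13},x_{23})^{(2)}$.

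\textbf{Part (3).} It suffices, by (1), to show ${\E_q}^{(r)}=\Erq$ for $q\le2$. For $q=1$ the ideal $\E_1$ is principal. For $q=2$, after setting aside the common variable factor, $\E_2$ is a complete intersection of two monomials in disjoint variables. Symbolic powers equal ordinary powers for complete intersections, and multiplying by the common factor preserves this.

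\textbf{Part (4).} This is the main obstacle. We must find all minimal monomial generators of ${\E_q}^{(2)}$ and show that each is $f_X$ for some nonempty $X\subseteq[q]$; then \cref{t:intro} gives the stated generating set for $I^{(2)}$.
\begin{enumerate}
\item Show each $f_X\in{\E_q}^{(2)}$. The guess is $f_X=\prod_{A\cap X\neq\emptyset}x_A\cdot\prod_{|A\cap X|\ge2}x_A$, or a similar product. One then checks that every minimal prime, i.e.\ every transversal of the generators, meets $f_X$ with multiplicity at least $2$.
\item For the converse, take a monomial $m\in{\E_q}^{(2)}$ and let $X$ be the set of indices $i$ for which $m$ ``doubly covers'' the generator $\epsilon_i$. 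We show that $f_X\mid m$, or that $m\in\Erq$, which is the case $|X|$ small where $f_X$ is a product of generators. This uses minimal transversals avoiding chosen variables to force exponents of $m$ up.
\end{enumerate}
I expect this exponent-forcing argument to be delicate. The first thing I would try is induction on $q$ via restriction to subsets of generators.
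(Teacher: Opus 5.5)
\textbf{Parts (1)--(3).} Parts (1) and (3) are correct and agree with the paper. For (3), the paper simply writes ${\E_2}^{(r)}=(y_{12})^r\cap(y_1,y_2)^r=y_{12}^r(y_1,y_2)^r={\E_2}^r$.

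For (2), your plan to check $h(r,q)\in P^r$ for each minimal prime $P=(y_A\colon A\in\C)$, $\C\in\CC_q$, works. Unlike the paper's route through $f_{[q]}\in{\E_q}^{(2)}$ and ${\E_q}^{(2)}{\E_q}^{r-2}\subseteq{\E_q}^{(r)}$, it is independent of part (4). However, you must use the actual $h(r,q)=\by^{\bj+\be_{[q]}}\e_1^{r-2}$ of \cref{eq:newhrq}, which involves all $q$ generators, not three. The count is as follows. The cover $\{[q]\}$ gives degree $2+(r-2)=r$. Any other minimal cover has at least two members, one of which contains $1$, so it gives degree at least $2+(r-2)$. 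This holds for every $q$. The hypothesis $q\ge 3$ only makes $h(r,q)\notin\Erq$, i.e.\ makes the test non-vacuous.

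\textbf{Part (4): wrong formula for $f_X$.} The genuine gap is part (4), which is the substantive claim. Your guessed $f_X$ is wrong, and step 1 fails for it.
For $X=\{i\}$ your product is $\e_i$. This is not in ${\E_q}^{(2)}$, since it has degree $1$ in $y_{[q]}$ and $(y_{[q]})$ is a minimal prime.
For $X=[3]$ your product is $y_1y_2y_3y_{12}^2y_{13}^2y_{23}^2y_{123}^2$, which is a multiple of $\e_1\e_2$. So even after fixing the singletons, your family generates only ${\E_3}^2$ and misses $y_1y_2y_3y_{12}y_{13}y_{23}y_{123}^2\in{\E_3}^{(2)}\ssm{\E_3}^2$.

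The $f_X$ of \cref{eq:f-defn} assigns the following exponent to $y_A$: $2$ when $X\subseteq A$ (not when $|A\cap X|\ge 2$); $1$ when $A$ meets $X$ without containing it; and $0$ when $A\cap X=\emptyset$. Membership in ${\E_q}^{(2)}$ then follows from the criterion that $\by^{\bb}\in{\E_q}^{(2)}$ iff $\sum_{A\in\C}b_A\ge 2$ for all $\C\in\CC_q$. Indeed, a cover either has a member containing $X$ or has two members meeting $X$.

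\textbf{Part (4): the converse.} Your converse (indices ``doubly covered'' by $m$, or induction on $q$) does not identify the right structure. The missing idea is to study the zero set $\DD=\{A\colon b_A=0\}$ of a minimal generator $\by^{\bb}$ and prove that it is closed under unions (\cref{l:claim}).
Suppose $b_{A_1}=b_{A_2}=0<b_{A_1\cup A_2}$. Then $\by^{\bb}/y_{A_1\cup A_2}$ still satisfies every cover inequality. The reason is that, for a minimal cover $\{A_1\cup A_2\}\cup\K$, all sets of $\K$ survive in a minimal subcover of $\{A_1,A_2\}\cup\K$. Since $A_1$ and $A_2$ contribute $0$ there, already $\sum_{A\in\K}b_A\ge 2$. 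This contradicts minimality of $\by^{\bb}$.

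Hence a nonempty $\DD$ has a maximum $X$, and $X\ne[q]$ because $b_{[q]}\ge 2$. Using the minimal cover $\{X,T\}$, one gets $b_T\ge 2$ for all $T\supseteq\compl{X}$; for $T=[q]$ use the cover $\{[q]\}$ instead. Also $b_A\ge 1$ whenever $A\not\subseteq X$.
Now compare with the exponents of $f_{\compl{X}}$, which are $0$ exactly on subsets of $X$ and $2$ exactly on supersets of $\compl{X}$. This gives $f_{\compl{X}}\mid\by^{\bb}$.
If $\DD=\emptyset$, then $f_{[q]}=\by^{\bj+\be_{[q]}}$ divides $\by^{\bb}$.

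The relevant generator is indexed by the complement of the maximal zero set. No count of individually ``doubly covered'' indices will produce it.
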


The fact that $\psi_I$ preserves integral closures of powers and symbolic powers gives upper bounds on a multitude of their homological and algebraic invariants. 
One of the main results in~\cite{Lr} is an inequality between Betti numbers
$$
    \beta_i(I^r) \leq \beta_i({\E_q}^r) \qforall i,r \ge 1,
$$
where $I$ is an arbitrary square-free monomial ideal on $q$ generators. In particular, once $q$ and $r$ are fixed, $\Erq$ provides the maximum $i$-th Betti number for any $i$ among square-free monomial ideals with $q$ generators, justifying the choice of word \say{extremal}. In this paper, 
we show that $\E_q$ exhibits extremal  behavior with respect to other well known algebraic and numerical invariants, which are listed in \cref{t:intro-bounds}. Besides Betti numbers, \cref{t:intro-bounds} includes well-known numerical invariants related to measuring the difference between ordinary powers and symbolic powers such as the symbolic defect $\sdefect(r,I)$, resurgence $\rho(I)$, and asymptotic resurgence $\rho_a(I)$, as well as the integral defect $\idefect(r,I)$, which we define in this paper as the integral closure counterpart of symbolic defect.

\begin{introtheorem}[\cref{t:upperbound,thm:integral-defect,c:sdefectbound,c:rho,t:general-I-symbolic,p:resurgence-cases}]
\label{t:intro-bounds}
    If $I$  is an ideal minimally generated by $q \ge 1$ square-free monomials in a polynomial ring $R$, and $r$ is a positive integer, then
    \begin{enumerate}
        \item $\beta_i(\overline{I^r}) \leq \beta_i(\overline{\Erq}) \qand \beta_i(I^{(r)}) \leq \beta_i({\E_q}^{(r)})$  for any integer $i$;
        \item $\idefect(r, I) \leq \idefect(r, \E_q)\qand \sdefect(r, I) \leq \sdefect(r, \E_q)$;
        \item  $\sdefect(2,\E_q) = 2^q -1-q-\binom{q}{2}$;
        \item $\rho(I) \leq \rho(\E_q) \qand \rho_a(I) \leq \rho_a(\E_q)$;
        \item $\rho(\E_3)=\rho_a(\E_3)=\frac{4}{3}$,\, $\rho_a(\E_4)=\frac{3}{2}$ \,and\, $\rho_a(\E_5)=\frac{8}{5}$.
    \end{enumerate}
  \end{introtheorem}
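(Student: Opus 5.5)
This is a collection of results, so I will prove the five parts in turn, taking as the main input \cref{t:intro}: $\overline{I^r}=\psi_I(\overline{\Erq})R$ and $I^{(r)}=\psi_I({\E_q}^{(r)})R$.

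For part (1), I will use the Betti number argument of \cite{Lr}, which gives $\beta_i(I^r)\le\beta_i(\Erq)$. That argument rests on three facts. First, $\psi_I$ sends variables of the extremal ring to products of variables of $R$ (square-free monomials). Second, it maps a minimal generating set of $\Erq$ onto a generating set of $I^rR$. Third, it preserves lcm-lattice information, so the lcm lattice of $\psi_I(\Erq)$ is a join-preserving image of that of $\Erq$. Gasharov–Peeva–Welker type results then give that the minimal free resolution of $\Erq$, relabeled via $\psi_I$, is a (possibly nonminimal) free resolution of $I^rR$.

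I will repeat this argument verbatim with $\overline{\Erq}$ and ${\E_q}^{(r)}$ in place of $\Erq$. The only new ingredient needed is that $\psi_I$ maps the generators of these ideals onto generators of $\overline{I^r}$ and $I^{(r)}$, which is exactly \cref{t:intro}. Flat extension from $\psi_I(\cdot)$ to $\cdot R$ does not change Betti numbers.

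For part (2), I will express each defect as a count of minimal generators:
\[
\sdefect(r,I)=\mu(I^{(r)}/I^r),\qquad \idefect(r,I)=\mu(\overline{I^r}/I^r).
\]
By \cref{t:intro-bounds-symbolic-powers}(1) and \cref{t:intro-bounds-integral-closure}(1), $I^{(r)}/I^r$ is generated by the images of the elements of ${\E_q}^{(r)}\smallsetminus\Erq$ that are minimal generators, and similarly for the integral closure. Hence $\mu$ can only drop under $\psi_I$.

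For part (3), I will use the explicit description of the minimal generators $f_X$ of ${\E_q}^{(2)}$ for $\emptyset\ne X\subseteq[q]$ (\cref{t:intro-bounds-symbolic-powers}(4)). There are $2^q-1$ of them. I then remove those that lie in ${\E_q}^2$: I expect these to be exactly the $f_X$ with $|X|\le2$, namely the products $\epsilon_a\epsilon_b$ and $\epsilon_a^2$, which gives $q+\binom q2$ elements. The remaining $f_X$ with $|X|\ge3$ are distinct minimal generators not in ${\E_q}^2$. This needs a check that $f_X\notin{\E_q}^2$ for $|X|\ge3$, which I will do by exhibiting a variable of $f_X$ that appears with multiplicity too low for any product of two generators.

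For part (4), I will argue containment transfer. If ${\E_q}^{(s)}\subseteq{\E_q}^r$, then applying $\psi_I$ and \cref{t:intro} gives $I^{(s)}\subseteq I^r$. Therefore every ratio $s/r$ that violates containment for $I$ also violates it for $\E_q$, and taking suprema gives $\rho(I)\le\rho(\E_q)$. The same argument works for $\rho_a$, using the definition via $I^{(st)}\not\subseteq I^{rt}$ for $t\gg0$.

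Part (5) is the main obstacle. For $\rho_a$, I plan to use the Waldschmidt-type formula
\[
\rho_a(I)=\max_v \frac{v(I)}{\hat v(I)},
\]
the maximum over monomial valuations from the symbolic polyhedron, which for square-free monomial ideals reduces to linear programming over the vertices of the symbolic polyhedron of $\E_q$ versus its Newton polyhedron. I will compute the relevant vertices using the symmetry of $\E_q$ under $S_q$: a candidate vertex is the indicator-like point attached to subsets of size $k$, compared against the minimal order in $\E_q$. I expect the maximizers to give $4/3,3/2,8/5$ for $q=3,4,5$.

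For $\rho(\E_3)=4/3$, I will add an explicit check. The ideal $\E_3$ is small, so I can show $\E_3^{(s)}\subseteq\E_3^r$ whenever $s/r\ge4/3$ by describing $\E_3^{(r)}$ via the test element $h$. Then $\rho\ge\rho_a=4/3$ gives equality.
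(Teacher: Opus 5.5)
Parts (1)--(4) are correct and essentially follow the paper. Two small variants are harmless. In (4) you transfer containments through the $t\gg0$ definition of $\rho_a$ rather than through $\rho_a(I)=\sup\{s/r : I^{(s)}\not\subseteq\overline{I^r}\}$. In (3) you check $f_X\notin{\E_q}^2$ variable by variable instead of by degree. One caution for (1): images under $\psi_I$ of minimal generators of $\overline{\Erq}$ or ${\E_q}^{(r)}$ need not be minimal, nor distinct. So the lcm-lattice result you invoke must be the version for arbitrary generating sets. The paper instead uses the Tor-vanishing argument of \cite[Theorem~3.8]{extremal}, which needs only lcm preservation.

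The genuine gap is in (5), starting with your route to $\rho(\E_3)=4/3$. The element $h(r,3)=f_{[3]}\e_1^{r-2}$ is a single element of ${\E_3}^{(r)}\smallsetminus{\E_3}^r$. The paper only describes ${\E_3}^{(2)}={\E_3}^2+(h(2,3))$, and nothing controls ${\E_3}^{(s)}$ for $s\ge 3$. So you cannot verify ${\E_3}^{(s)}\subseteq{\E_3}^r$ for all $s/r\ge 4/3$ this way. You would first have to prove something like: the symbolic Rees algebra of $\E_3$ is generated by $\E_3$ and $f_{[3]}$. The missing idea is normality. By \cref{t:E32} (via Singla's criterion), $\overline{{\E_3}^r}={\E_3}^r$ for every $r$. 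Hence the characterization of $\rho_a$ from \cite{DFMS2019} literally becomes the definition of $\rho$, and $\rho(\E_3)=\rho_a(\E_3)$ follows at once, with no description of higher symbolic powers needed.

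For the three values of $\rho_a$, your framework (Rees valuations tested against the symbolic polyhedron) is the same kind of polyhedral computation the paper performs via \cite[Procedure~A.1, Theorem~3.7]{V2023}. However, ``I expect the maximizers to give'' is not an argument. Evaluating at particular, e.g.\ $S_q$-symmetric, points of $\Pp(\E_q)$ only produces lower bounds. The lower bound is in fact easy:
\begin{itemize}
\item $f_{[q]}^t\in{\E_q}^{(2t)}$.
\item Summing the inequalities of \cref{l:closure-r-alpha} over the $q$ subsets of size $q-1$ shows $f_{[q]}^t\notin\overline{{\E_q}^r}$ whenever $(q-1)r>qt$.
\item Hence $\rho_a(\E_q)\ge 2(q-1)/q$, which is exactly $4/3$, $3/2$, $8/5$ for $q=3,4,5$.
\end{itemize}
The real content of (5) is the matching upper bound. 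It amounts to $c\,\Pp(\E_q)\subseteq\conv(\Ll(\E_q))$ for $c=2(q-1)/q$. That requires checking $c\,p$ against the Newton polyhedron for every vertex $p$ of $\Pp(\E_q)\subseteq\RR^{2^q-1}$, not only the symmetric ones. This finite but substantial computation is what the paper delegates to Villarreal's procedure, and your proposal does not supply it.
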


All of these results show that computing invariants and bounds on these invariants for powers of arbitrary monomial ideals reduces to, or is greatly aided by, computing those invariants just once, for powers of the extremal ideal. These invariants are in general difficult to compute.  We will see in \cref{sec:integral-closure,sec:2nd-symbolic-powers} that for extremal ideals, these computations can be completely described in terms of linear programming.
However, while the number of generators of the extremal ideal is $q$, the number of variables in the extremal ideal is very large, growing exponentially in the number of generators.  As a result, even the linear programming problems grow quickly in $q$ and $r$.  Still, the universality of the extremal ideal computations make them worthwhile.

This article is structured as follows. In \cref{sec:preliminaries} we recall the definitions of extremal ideals and of the ring map $\psi_I$ and prove basic results about these ideals and maps. \cref{sec:integral-closure} focuses on integral closures of powers of extremal ideals. We prove the first main result of the paper, \cref{thm:integral-closure-extremal}, stating that $\psi_I$ preserves integral closures of powers. We introduce the integral defect, an invariant measuring how far $\overline{I^r}$ is from $I^r$, which is the integral closure counterpart of the symbolic defect. \cref{sec:symbolic} is a parallel study of symbolic powers, and \cref{sec:2nd-symbolic-powers} is dedicated to the second symbolic power. In particular, in \cref{c:explicit-bound} we provide an explicit bound dependent only on $q$ for the second symbolic defect of any square-free monomial ideal with $q$ generators. Finally, \cref{sec:betti-numbers} consists of results showing how free resolutions of $\overline{I^r}$ or $I^{(r)}$ interact with the map $\psi_I$, and how extremal ideals bound their Betti numbers.

\section{\bf  The map $\psi_I$ and extremal ideals} \label[section]{sec:preliminaries}

It is known that every square-free monomial ideal $I$  can be realized as the image of an extremal ideal under a ring homomorphism $\psi_I$ (\cite{Lr}). The premise of this paper and its sequel~\cite{AlgII} is that
the algebraic behavior of all powers of $I$ can be traced back to the powers of the extremal ideal via $\psi_I$.  
We begin by recalling definitions and basic properties related to extremal ideals and the ring homomorphism $\psi_I$.

\begin{definition}[{\bf Extremal ideals}]\label[definition]{d:extremal}
Let $q$ be a positive integer and consider the
polynomial ring
$$S_{[q]}=\sfk\big [ y_A \st \emptyset \neq A \subseteq [q]\big ]$$
over a field $\sfk$.  For each $i \in [q]$ define a square-free
monomial $\e_i$ in $S_{[q]}$ as
  $$\e_i= \prod_{\tiny \substack{\emptyset\ne A \subseteq [q]\\ i \in A}} y_A.$$ The square-free monomial ideal $\E_q =
  (\e_1,\ldots, \e_q)$ is called the {\bf $\pmb{q}$-extremal ideal}.
  \end{definition}

We will usually simplify the
notation by writing $y_{_1}$ for $y_{\{1\}}$, $y_{_{12}}$ for $y_{\{1,2\}}$,
etc., and refer to a $q$-extremal ideal simply as an extremal ideal.  Although we do not consider $y_{\emptyset}$ in general, when $y_{\emptyset}$ occurs due to set operations, we will use the convention that $y_{\emptyset}=1$.

\begin{example} When $q=4$, the ideal $\E_4$ is generated by the  monomials
  $$\begin{array}{ll} \e_1&=y_{_1} y_{_{12}} y_{_{13}} y_{_{14}} y_{_{123}} y_{_{124}}
    y_{_{134}}  y_{_{1234}}  ; \\ \e_2&=y_{_2} y_{_{12}} y_{_{23}}
    y_{_{24}} y_{_{123}} y_{_{124}} y_{_{234}} y_{_{1234}}
    ;\\ \e_3&=y_{_3} y_{_{13}} y_{_{23}} y_{_{34}} y_{_{123}} y_{_{134}} y_{_{234}}
    y_{_{1234}} ;\\ \e_4&=y_{_4} y_{_{14}} y_{_{24}} y_{_{34}}
    y_{_{124}} y_{_{134}} y_{_{234}} y_{_{1234}} \\
  \end{array}$$
in the polynomial ring $\sfk[y_{_1}, y_{_2}, y_{_3}, y_{_4}, y_{_{12}}, y_{_{13}}, y_{_{14}}, y_{_{23}}, y_{_{24}}, y_{_{34}}, y_{_{123}}, y_{_{124}}, y_{_{134}}, y_{_{234}}, y_{_{1234}}]$. 
\end{example}

Throughout this paper, unless stated otherwise, $R$ denotes the polynomial ring $\sfk[x_1,\dots, x_n]$ where $\sfk$ is a field, and $I$ is a square-free monomial ideal in $R$ minimally generated by monomials $m_1,\ldots,m_q$, with $q \ge 1$. The {\bf support} of a monomial $m$ is the set 
\[
\supp(m)=\{ x_k \st k \in [n] \qand x_k\mid m\}.
\]

We introduce a function $\unsure_I$ which keeps track of clusters of variables that always appear together in the generators of $I$.
For each  $\emptyset\ne A\subseteq [q]$, set
\begin{equation}\label{eq:gamma}
\unsure_I(A)=\bigcap_{j\in A}\supp(m_j)\smallsetminus  \bigcup_{j\in [q]\smallsetminus  A}\supp(m_j).
\end{equation}
Note that by the definition, the sets $\unsure_I(A)$ for $\emptyset \ne A \subseteq [q]$ that are nonempty form a partition of the set of variables appearing in the $m_i$, namely $\cup_{i=1}^q \supp(m_i)$.

\begin{example}\label[example]{e:psi} 
   Let $I$ be the ideal in $\sfk[x_1, \ldots , x_7]$ generated $q=3$ 
   square-free monomials
   $$m_1 = x_1x_2x_5x_7, \quad m_2 = x_2x_3x_7, \quad m_3 = x_3x_4x_6.$$ 
   Clustering the variables that always appear together, we rewrite
    $$m_1 = (x_2x_7)(x_1x_5), \quad 
    m_2 = (x_2x_7)(x_3), \quad 
    m_3 = (x_3)(x_4x_6).$$ 
   Then 
   $$\unsure_I(\{1,2\})=\{x_2,x_7\}, \quad 
   \unsure_I(\{1\})=\{x_1,x_5\}, \quad 
   \unsure_I(\{2,3\})=\{x_3\}, \quad 
   \unsure_I(\{3\})=\{x_4,x_6\}$$
   and $\unsure_I(A)=\emptyset$ for all other $A$.
   \end{example}

For convenience, we give an equivalent description of $\unsure_I$, which is straightforward to observe from \cref{eq:gamma}.

\begin{lemma}\label[lemma]{l:gamma}
If I is an ideal minimally generated by square-free monomials $m_1,\ldots,m_q$ in the polynomial ring $R=\sfk[x_1,\ldots,x_n]$, and $\emptyset \neq A \subseteq [q]$, then $\unsure_I(A)$ is the set consisting of all $x_k$ with $k\in [n]$ such that   
$$x_k \mid m_j\iff j\in A.$$

\end{lemma}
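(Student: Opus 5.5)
The plan is simply to unwind the definition in \cref{eq:gamma} and check both inclusions. The only ingredient needed is that, for a square-free monomial $m_j$, the statement $x_k \in \supp(m_j)$ is by definition the same as $x_k \mid m_j$.

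First, I show that every element of $\unsure_I(A)$ satisfies the stated condition. Take $x_k \in \unsure_I(A)$. Since $x_k \in \bigcap_{j\in A}\supp(m_j)$, we have $x_k \mid m_j$ for every $j \in A$; this is the implication ``$j\in A \Rightarrow x_k\mid m_j$''. Since $x_k \notin \bigcup_{j\in[q]\ssm A}\supp(m_j)$, we have $x_k \nmid m_j$ for every $j \notin A$. By contraposition this gives ``$x_k\mid m_j \Rightarrow j\in A$''. So $x_k \mid m_j \iff j\in A$.

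Second, I show the reverse inclusion. Suppose $x_k$, with $k\in[n]$, satisfies $x_k\mid m_j \iff j\in A$. Then $x_k\in\supp(m_j)$ for all $j\in A$, so $x_k$ lies in the intersection. Also $x_k\notin\supp(m_j)$ for all $j\notin A$, so $x_k$ avoids the union. Hence $x_k\in\unsure_I(A)$.

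One point deserves a remark. Because $A\neq\emptyset$, any $x_k$ in $\unsure_I(A)$ automatically lies in $\bigcup_i\supp(m_i)$, so both descriptions range over the same set of variables. There is no real obstacle here: the argument is purely a matter of restating set membership as divisibility.
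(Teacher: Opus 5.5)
Your proof is correct and is the same direct unwinding of the definition of $\unsure_I(A)$ that the paper intends. The paper gives no written proof and simply states that the lemma is straightforward to observe from the definition. One small remark: identifying $x_k\in\supp(m_j)$ with $x_k\mid m_j$ holds for any monomial, so square-freeness is not actually needed for this step.
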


\begin{definition}[{\bf The ring homomorphism $\psi_I$}]\label[definition]{d:psi}
  Let $I$ be an ideal minimally generated by
  square-free monomials $m_1,\ldots,m_q$ of the 
  polynomial ring $R=\sfk[x_1,\ldots,x_n]$. 
  Define $\psi_I$ to be the
  ring homomorphism $$\psi_I \colon S_{[q]} \to R \qwhere
  \psi_I(y_A)=\prod_{x_k\in \unsure_I(A)}x_k
 $$
where we use the convention is that $\psi_I(y_A)=1$ when $\unsure_I(A)=\emptyset$.  Alternatively, $\psi_I(y_A)$ can be thought as the square-free monomial with support equal to $\unsure_I(A)$.
\end{definition}

\begin{example} If $I$ is as in \cref{e:psi}, then    
$\psi_I: \sfk\big{[}y_A\colon \emptyset\ne A\subseteq \{1,2,3\}\big{]} \to \sfk[x_1,
  \ldots, x_7]$ is given by  
$$
  \psi_I(y_{_{12}}) = x_2x_7, \quad \psi_I(y_{_1}) = x_1x_5, \quad \psi_I(y_{_{23}}) = x_3,\quad \psi_I(y_{_3}) = x_4x_6,
$$  
and $\psi_I(y_A) = 1$ otherwise.
\end{example}

The ring homomorphism $\psi_I$ maps generators of $\Erq$ to generators of $I^r$, preserves least common multiples, and as a result it allows one to study free resolutions of $I^r$ using those of $\Erq$ (\cite{Lr,extremal}). We list some of the relevant properties of $\psi_I$ below. From here on, we use the notation
$$\Nrq=\{\ba=(a_1,\ldots,a_q) \in \NN^q \st a_1 + \cdots + a_q = r\} \qfor q,r \geq 1,
$$
so that $I^r$ is generated by 
$$
\bma=m_1^{a_1}\cdots m_q^{a_q} \qfor \ba=(a_1,\ldots,a_q) \in \Nrq.
$$
In particular $\Erq$ is generated by $\pmea=\e_1^{a_1}\cdots\e_q^{a_q}$ with $\ba=(a_1, \dots, a_q)\in \Nrq$.

 \begin{proposition}
 \label[proposition]{l:Lr-extremal}
 Let $q,r \geq 1$ and let $I$ be an ideal in the polynomial ring $R$, minimally generated by
   square-free monomials $m_1,\ldots,m_q$. Then the following statements hold.
   \begin{enumerate}
   \item $\Erq$ is minimally generated by  $\pmea$ for $\ba \in \Nrq$~\cite[Proposition 7.3(ii)]{Lr}; 
   \item $\psi_I(\pmea)=\bma$ for each $\ba\in \Nrq$~\cite[Lemma 7.7(i)]{Lr};
   \item\label{i:psi-intersection3}
   $\psi_I(\Erq)R  =I^r$~\cite[Lemma 7.7(ii)]{Lr};
   \item If $A\neq B$ are  subsets of $[q]$, then $\psi_I(y_A)$ and $\psi_I(y_B)$ have disjoint supports, that is,
   $$
   \gcd(\psi_I(y_A),\psi_I(y_B))=1 \qforall A\neq B \subseteq [q];
   $$
   \item  For any monomials $\mu,\nu$ in $S_{[q]}$, we have
    $$
    \psi_I(\lcm(\mu,\nu)) = \lcm(\psi_I(\mu),\psi_I(\nu)).
    $$
    In particular, $\psi_I$ provides a map between the lcm lattices of  $\Erq$ and $I^r$ (\cite[Lemma 7.7(iii)]{Lr}) via 
$$
   \psi_I(\lcm(\pme^{\ba_1}, \dots, \pme^{\ba_t}))=\lcm(\bm^{\ba_1}, \dots, \bm^{\ba_t}) \qforall \ba_1,\ldots,\ba_t \in \Nrq, \quad t\ge 1;
$$  
\item  If $\J$ and $\K$ are monomial ideals  in $S_{[q]}$, then 
    $$
    \psi_I(\mathcal{J}\cap \mathcal{K})R = \psi_I(\mathcal{J})R \cap \psi_I(\mathcal{K})R.
    $$

\end{enumerate}
 \end{proposition}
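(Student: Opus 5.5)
Items (1)--(3) and the second half of (5) are quoted from \cite{Lr}, so the plan concentrates on (4), on the first half of (5), and on (6). Throughout I use the fact that the nonempty sets $\unsure_I(A)$ are pairwise disjoint, as noted after \cref{eq:gamma}.

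For (4), the point is that $\psi_I(y_A)$ is by definition the square-free monomial whose support is $\unsure_I(A)$. Suppose $A\neq B$ and pick $j$ in their symmetric difference; say $j\in A\smallsetminus B$. By \cref{l:gamma}, every variable in $\unsure_I(A)$ divides $m_j$, while no variable in $\unsure_I(B)$ divides $m_j$. So the two supports are disjoint, and the gcd is $1$.

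For (5), write $\mu=\prod_A y_A^{\alpha_A}$ and $\nu=\prod_A y_A^{\beta_A}$. By (4), each variable $x_k$ lies in at most one set $\unsure_I(A)$. Hence the exponent of $x_k$ in $\psi_I(\mu)$ is $\alpha_A$ when $x_k\in\unsure_I(A)$, and $0$ when $x_k$ lies in no $\unsure_I(A)$. The same holds for $\nu$ with $\beta_A$. Taking exponentwise maxima, the exponent of $x_k$ in $\lcm(\psi_I(\mu),\psi_I(\nu))$ is $\max(\alpha_A,\beta_A)$, which is exactly its exponent in $\psi_I(\lcm(\mu,\nu))$. The lattice statement follows by induction on $t$, combined with (2).

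For (6), I use that $\mathcal J\cap\mathcal K$ is generated by the monomials $\lcm(\mu,\nu)$, where $\mu$ and $\nu$ range over the monomial generators of $\mathcal J$ and $\mathcal K$. Similarly, $\psi_I(\mathcal J)R\cap\psi_I(\mathcal K)R$ is generated by $\lcm(\psi_I(\mu),\psi_I(\nu))$. Here I note that $\psi_I(\mathcal J)R$ is generated by the images of the generators of $\mathcal J$, since $\psi_I$ is a ring map sending monomials to monomials. Applying (5) then shows that the two generating sets coincide, so the two ideals are equal. The only point needing care is this generator identification for images of ideals, which is routine.
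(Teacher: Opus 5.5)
Your proposal is correct and follows essentially the same route as the paper: (4) via \cref{l:gamma}, where you argue directly with an index $j\in A\smallsetminus B$ rather than by contradiction; (5) by comparing exponents using the disjointness from (4); and (6) by matching the lcm generators of the two sides via (5). Your termwise identification of the two generating sets in (6) simply combines the paper's two inclusions into one step.
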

 
\begin{proof}
    Statements $(1),(2),(3)$ are shown in \cite{Lr}.     
    For Statement $(4)$, let $A$, $B$ be subsets of $[q]$ and assume $k\in \unsure_I(A)\cap \unsure_I(B)$. By \cref{l:gamma} we have
    \[
    j\in A\iff x_k\mid m_j\iff j\in B
    \]
    and hence $A=B$. Thus, if $A\ne B$, we conclude that $\unsure_I(A)\cap \unsure_I(B)=\emptyset$, implying $\psi_I(A)$ and $\psi_I(B)$ have disjoint support. 

    For Statement $(5)$, set $\mu=\prod_{\tiny A\subseteq [q]} y_A^{a_A}$ and $\nu=\prod_{\tiny A\subseteq [q]} y_A^{b_A}$, using the convention that $y_\emptyset=1$. We have then 
$$\begin{array}{ll}
\psi_I(\lcm(\mu,\nu)) 
&= \displaystyle \psi_I\Big (\prod_{\tiny A\subseteq [q]} y_A^{\max\{a_A,b_A \}}\Big ) 
= \displaystyle \prod_{\tiny A\subseteq [q]}\psi_I( y_A)^{\max\{a_A,b_A\}}
=\displaystyle \prod_{\tiny A\subseteq [q]}\prod_{\tiny x_k\in \unsure_I(A)}x_k^{\max\{a_A,b_A\}}\\
&\\
&= \displaystyle \lcm\Big (\prod_{\tiny A\subseteq [q]}\psi_I( y_A)^{a_A},\prod_{\tiny A\subseteq [q]}\psi_I( y_A)^{b_A}\Big)
=\lcm(\psi_I(\mu),\psi_I(\nu)).\\
\end{array}
$$
where the first equality follows from the expressions of $\mu$ and $\nu$, the second equality follows from the fact that $\psi_I$ is a ring homomorphism, the third equality follows from the definition of $\psi_I$, the fourth equality follows from the definition of $\psi_I$ and the fact each variable of $R$ appears with a non-zero exponent in at most one term of each products  $\prod_{\tiny A\subseteq [q]}\psi_I( y_A)^{a_A}$ and $\prod_{\tiny A\subseteq [q]}\psi_I( y_A)^{b_A}$ by Statement (4), and the last equality follows from the expressions of $\mu$ and $\nu$ and the fact that $\psi_I$ is a ring homomorphism.

For Statement (6), observe that in order to prove an inclusion between two ideals, it suffices to show that the generators of one ideal are contained in the second ideal. The inclusion  
$$\psi_I(\mathcal{J}\cap \mathcal{K})R \subseteq \psi_I(\mathcal{J})R \cap \psi_I(\mathcal{K})R$$ 
thus follows from the inclusion $\psi_I(\mathcal{J}\cap \mathcal{K})\subseteq \psi_I(\mathcal{J})\cap \psi_I(\mathcal{K})$.

For the reverse inclusion, observe that the monomial generators of $\psi_I(\mathcal{J})R$ and $\psi_I(\mathcal{K})R$ have the form $\psi_I(\nu)$ and  $\psi_I(\mu)$ respectively, where $\nu,\mu$ are monomial generators of $\mathcal{J},\mathcal{K}$ respectively. Then the monomial 
generators of the ideal $\psi_I(\mathcal{J})R  \cap  \psi_I(\mathcal{K})R$ have the form  $\lcm(\psi_I(\nu), \psi_I(\mu))$, and the conclusion follows by applying Statement (5), as indicated below: 
\begin{gather*}
\lcm(\psi_I(\nu), \psi_I(\mu))=\psi_I(\lcm(\nu,\mu))\in \psi_I(\mathcal{J}\cap \mathcal{K}).\qedhere
\end{gather*}
\end{proof}

\section{\bf  Integral Closures via $\psi_I$} \label{sec:integral-closure}

 In this section we show that in order to determine whether the $r$-th power of a square-free monomial ideal $I$ generated by $q$ elements is integrally closed, one only has to examine the image of the generators of the integral closure of ${\E_q}^r$ (\cref{thm:integral-closure-extremal,thm:integral-defect}).  \cref{thm:integral-defect} also shows that the integral defect of $I^r$ is bounded by that of $\Erq$. Since our result applies to every power, extremal ideals turn out to be a powerful tool for studying Rees algebras and normality of square-free monomial ideals. As an application of our results in this section, we show that every square-free monomial ideal generated by at most $3$ elements is normal (\cref{t:general-I-ic}(2)). We provide a theoretical proof of this result, but note that our work allows us to reduce the problem about infinitely many monomial ideals to a setting where a practical computation using the Normaliz~\cite{NormalizSource} package in Macaulay2~\cite{M2} for a single power of a specific ideal could be used to prove the general result.

We start by introducing the necessary background on integral closures of monomial ideals. We refer to \cite{Huneke-Swanson2006} for more details.
Given an ideal $I \subset R = \sfk[x_1, \dots, x_n]$, an element $f$ is said to be \emph{integral over $I$} if for some integer $s$ there exist $a_i \in I^i$ for $1\le i\le s$ such that
$$
    f^s + f^{s-1}a_{1} + \dots + f a_{s - 1} + a_{s} = 0.
$$
The set $\overline I$ of all elements that are integral over $I$ is called the integral closure of $I$. It is well known that $\overline I$ is always an ideal. If $I = \overline I$, we say $I$ is \emph{integrally closed}, and if $I^r$ is integrally closed for all $r\ge 1$, we say $I$ is \emph{normal}.

For monomial ideals, it is possible to translate the problem of computing integral closure to a problem about convex geometry. The \emph{convex hull} of a set $\Delta \subseteq \RR^n$ is the set 
{\begin{equation}\label{d:conv}
    \conv(\Delta) = \{c_1 \ba_1 + \dots + c_s \ba_s \st \sum c_i = 1, \quad \ba_i \in \Delta \qand c_i \geq 0\}.
\end{equation}}
Using the notation  
$$\bx^{\ba}=x_1^{a_1}\cdots x_n^{a_n} \qwhere 
\ba = (a_1, \dots, a_n) \in \NN^n,
$$
we have the following characterization of the integral closure of monomial ideals.

\begin{theorem}[{\cite[Section 1.4]{Huneke-Swanson2006}}]\label[theorem]{l:integral-closure-basics}
Let $J$ be an ideal in a polynomial ring $R=\sfk[x_1, \dots, x_n]$ minimally generated by monomials $\bx^{\ba_1}, \dots, \bx^{\ba_s}$ 
with $s \geq 1$ and $\ba_j=(a_{j1},\ldots,a_{jn})\in \mathbb N^n$ for each $j=1,\ldots,s$.
Let  $\bb=(b_1, \ldots, b_n)\in \NN^n$. Then the following are equivalent: 
\begin{enumerate}
\item $\bx^\bb \in \overline J$;
\item $\bb \in \conv(\ba \in \NN^n \st \bx^{\ba} \in J)$;
\item There exists an integer $i$ such that $(\bx^\bb)^i \in J^i$; that is, there exist monomials $u_1, \dots, u_i$ in $J$ such that 
\[
(\bx^\bb)^i=u_1u_2\cdots u_i;
\]
\item\label{i:int-4} There exist rational non-negative numbers $\gamma_1, \dots, \gamma_s$ such that $\sum_{j=1}^s \gamma_j =1$ and
\[
(b_1, \dots, b_n)\succcurlyeq \sum_{j=1}^s \gamma_j(a_{j1}, \dots, a_{jn})
\]
where $(v_1,\ldots , v_n) \succcurlyeq (w_1, \ldots , w_n)$ if and only if $v_i \ge w_i$ for all $i$.
\end{enumerate}
\end{theorem}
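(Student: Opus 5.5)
The plan is to prove the cycle of implications $(1)\Rightarrow(3)\Rightarrow(2)\Rightarrow(4)\Rightarrow(3)\Rightarrow(1)$, with the last implication being the definition of integral closure. Throughout, write $J=(\bx^{\ba_1},\dots,\bx^{\ba_s})$ and $P=\conv(\ba\in\NN^n \st \bx^\ba\in J)$.

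\textbf{Step 1: $(3)\Rightarrow(1)$ and $(1)\Rightarrow(3)$.} If $(\bx^\bb)^i\in J^i$, then $f=\bx^\bb$ is a root of $x^i-(\bx^\bb)^i$, whose constant term lies in $J^i$. So $f$ is integral over $J$.

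Conversely, suppose $\bx^\bb$ is integral, with $f^t+a_1f^{t-1}+\dots+a_t=0$ and $a_k\in J^k$. Since $J^k$ is a monomial ideal, we may replace each $a_k$ by its term of the right multidegree: compare the coefficient of the monomial $\bx^{t\bb}$ on both sides. Some $a_k$ must contain $\bx^{k\bb}$ with nonzero coefficient, because the $f^t$ term must be cancelled. Since $J^k$ is monomial, $\bx^{k\bb}\in J^k$, which gives $(3)$ with $i=k$. This coefficient-comparison is the only genuinely algebraic step, and I expect it to be the main point needing care.

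\textbf{Step 2: $(3)\Rightarrow(2)$.} Write $(\bx^\bb)^i=u_1\cdots u_i$ with the $u_j=\bx^{\bc_j}\in J$. Then $i\bb=\sum_j \bc_j$, so $\bb=\sum_j \tfrac1i\bc_j$ is a convex combination of exponent vectors of monomials in $J$.

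\textbf{Step 3: $(2)\Rightarrow(4)$.} Each $\bx^{\bc}\in J$ is divisible by some generator, so $\bc\succcurlyeq \ba_{j(\bc)}$. Hence a convex combination $\bb=\sum_k c_k\bc_k$ dominates $\sum_k c_k\ba_{j(\bc_k)}$. Grouping terms gives real weights $\gamma_j\ge0$ with $\sum_j\gamma_j=1$ and $\bb\succcurlyeq\sum_j\gamma_j\ba_j$.

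We still need rational weights. The set of $\gamma\in\RR^s$ satisfying these linear inequalities with integer data is a nonempty polyhedron defined over $\mathbb Q$. Its vertices are therefore rational, so a rational solution exists.

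\textbf{Step 4: $(4)\Rightarrow(3)$.} Clear denominators: let $\gamma_j=g_j/i$ with $g_j\in\NN$ and $\sum_j g_j=i$. Then
\[
i\bb \succcurlyeq \sum_j g_j\ba_j .
\]
So $(\bx^\bb)^i$ is divisible by $\prod_j(\bx^{\ba_j})^{g_j}\in J^i$. Absorbing the leftover monomial factor into one of the $u$'s writes $(\bx^\bb)^i$ as a product of $i$ monomials in $J$, which is $(3)$.
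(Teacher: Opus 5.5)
Your proof is correct. The coefficient-of-$\bx^{t\bb}$ argument for $(1)\Rightarrow(3)$ is sound, as is the passage from real to rational weights in $(2)\Rightarrow(4)$ via rational vertices of the nonempty bounded polyhedron. The implications you prove do cover all four equivalences.

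The paper gives no proof of its own and simply cites Huneke--Swanson, Section~1.4, whose argument is essentially the one you give.
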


We prove in \cref{thm:integral-closure-extremal} that for any square-free monomial ideal $I$ and any $r\ge 1$, the map $\psi_I$ sends the generators of $\overline{\Erq}$ to the generators of $\overline{I^r}$.  We start by using the above characterizations of integral closures to show how integral closures of monomial ideals behave when deleting a variable.

\begin{lemma}\label[lemma]{l:int-closure-1}
For each positive integer $n$, consider the ring homomorphism 
$$\varphi_n\colon R=\sfk[z_1, \ldots, z_n] \to R'=\sfk[z_1, \ldots, z_{n-1}]
\qwhere
\varphi_n(z_i)=\begin{cases} 1&\text{if\, $i=n$}\\
z_i&\text{if\, $1\le i<n.$}\end{cases} 
$$
 If $J$ is a monomial ideal in $R$, then $\varphi_n(\overline{J})R'
 =\overline{\varphi_n(J) R'}$. 
\end{lemma}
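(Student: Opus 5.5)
Both sides are monomial ideals of $R'$: $\overline J$ is monomial, and $\varphi_n$ sends monomials to monomials. So it suffices to compare the monomials in each side. I would prove the two inclusions separately, using characterization (3) of \cref{l:integral-closure-basics} for the forward inclusion and characterization (4) for the reverse one. Throughout I use one elementary fact: a monomial $\bx^{\bc}\in R'$ lies in $\varphi_n(J)R'$ if and only if $\bx^{\bc} z_n^t\in J$ for some $t\ge 0$.

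\textbf{Inclusion $\varphi_n(\overline J)R'\subseteq \overline{\varphi_n(J)R'}$.} It is enough to check the generators $\varphi_n(u)$, where $u$ runs over the monomials of $\overline J$. By (3), there is an $i$ with $u^i=u_1\cdots u_i$ and each $u_j\in J$ a monomial. Since $\varphi_n$ is a ring homomorphism,
\[
\varphi_n(u)^i=\varphi_n(u_1)\cdots\varphi_n(u_i)\in (\varphi_n(J)R')^i .
\]
Applying (3) again, now in $R'$, gives $\varphi_n(u)\in\overline{\varphi_n(J)R'}$. This direction is routine.

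\textbf{Inclusion $\overline{\varphi_n(J)R'}\subseteq \varphi_n(\overline J)R'$.} Let $\bx^{\bb}\in\overline{\varphi_n(J)R'}$ with $\bb\in\NN^{n-1}$.
\begin{itemize}
\item The ideal $\varphi_n(J)R'$ is generated by the monomials $\varphi_n(\bx^{\ba_j})$. Their exponent vectors are the truncations $\ba_j'=(a_{j1},\ldots,a_{j,n-1})$.
\item By (4), there are rationals $\gamma_j\ge 0$ with $\sum_j\gamma_j=1$ and $\bb\succcurlyeq\sum_j\gamma_j\ba_j'$.
\item Choose an integer $N\ge\max_j a_{jn}$ and set $\bb^*=(\bb,N)\in\NN^n$.
\item Then $\bb^*\succcurlyeq\sum_j\gamma_j\ba_j$ coordinatewise. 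The first $n-1$ coordinates are given by the previous step. In the last coordinate, $N\ge \sum_j\gamma_j a_{jn}$ because this is a convex combination of numbers at most $N$.
\item By (4) in $R$, we get $\bx^{\bb^*}\in\overline J$.
\item Finally $\varphi_n(\bx^{\bb^*})=\bx^{\bb}$, so $\bx^{\bb}\in\varphi_n(\overline J)R'$.
\end{itemize}

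The only real point is this reverse inclusion. One has to lift an integrality certificate from $R'$ back to $R$, and padding with a large power of $z_n$ does exactly that. The convex-combination form (4) makes the padding transparent, whereas the power form (3) would require tracking products. No other obstacle is expected.
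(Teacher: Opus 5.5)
Your proof is correct and follows the paper's argument: characterization (3) for the forward inclusion, and characterization (4) with padding in the last coordinate for the reverse inclusion. The paper pads with $b_n=\sum_j a_{jn}$ where you use $N\ge\max_j a_{jn}$, and it sets $\gamma_j=0$ on truncations that are not minimal generators, a point you pass over; your uniform treatment also makes the paper's separate case of a pure power $z_n^t$ among the minimal generators unnecessary.
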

\begin{proof} If $z_n^t$ is a minimal generator of $J$ for some $t \ge 1$, then $$\varphi_n(\overline{J})R'= R'=\phi_n(J)R'
 =\overline{\varphi_n(J) R'}$$ and the result holds.
  Thus we may assume that $\bz^{\ba_1}, \dots, \bz^{\ba_s}$ are the minimal monomial generators of $J$, where $\ba_i=(a_{i1},\dots,a_{in}) \in \mathbb N^n$ for all $i\in [n]$ and $s\ge 1$, and $\supp(\bz^{\ba_i}) \neq \{z_n\}$ for all $i \in[s]$. Note that the ideal $\varphi_n(J)$ is generated by the monomials $\varphi_n(\bz^{\ba_i})=\bz^{\bc_i}$ where 
  \[
  \bc_i=(a_{i1}, \dots, a_{i\, (n-1)})\in \mathbb N^{n-1} 
  \]
for all $i\in [n]$. In particular, $\varphi_n(J)$ is minimally generated by a subset of $\{\bz^{\bc_1}, \dots, \bz^{\bc_s}\}$.

 To prove the inclusion $\subseteq$, let  $f=z_1^{b_1}\cdots z_n^{b_n}\in \overline J$. We will show $\varphi_n(f)\in \overline{\varphi_n(J)}$. By \cref{l:integral-closure-basics} there exists an integer $i$ such that $f^i=u_1\cdots u_i$ for monomials $u_1, \dots, u_i$ in $J$. Since $\varphi_n$ is a ring homomorphism and $f^i\in J^i$, it follows that 
\[
(\varphi_n(f))^i=\varphi_n(f^i)=\varphi_n(u_1\cdots u_i)=\varphi_n(u_1)\cdots \varphi_n(u_i)\in \varphi_n(J)^i.
\]
It then follows from \cref{l:integral-closure-basics} that  $\varphi_n(f)\in \overline{\varphi_n(J)}$.

For the reverse inclusion, let $g=z_1^{b_1}\cdots z_{n-1}^{b_{n-1}}\in \overline{\varphi_n(J)}$. By \cref{l:integral-closure-basics}\eqref{i:int-4}, using the convention that $\gamma_j=0$ if 
$\bz^{\bc_j} = z_1^{a_{j1}}\cdots z_{n-1}^{a_{j(n-1)}}$ is not a minimal generator of $\varphi_n(J)$, there exist non-negative rational numbers $\gamma_1, \dots, \gamma_{s}$ such that
\[
\sum_{j=1}^{s}\gamma_j=1\qand (b_1, \dots, b_{n-1})\succcurlyeq \sum_{j=1}^s \gamma_j(a_{j1}, \dots, a_{j\, (n-1)}).
\] 
Set
\[
b_n=\sum_{j=1}^s a_{jn} \qand f=z_1^{b_1}\cdots z_{n}^{b_{n}}.
\]
We have then $\varphi_n(f)=g$ and 
\[
(b_1, \ldots, b_n)\succcurlyeq \sum_{j=1}^s \gamma_j(a_{j1}, \dots, a_{jn}).
\]
Using \cref{l:integral-closure-basics}, we see that $f\in \overline J$ and we conclude $g\in \varphi_n(\overline J)$. This finishes the proof. 
\end{proof}

Using \cref{l:int-closure-1} we now show how the ring homomorphism $\psi_I$ can be used to compute integral closures of powers of $I$.

\begin{theorem}\label{thm:integral-closure-extremal}
    Let $I$ be an ideal in the
  polynomial ring $R=\sfk[x_1,\ldots,x_n]$ minimally generated by $q\ge 1$ square-free monomials. Then for all $r\ge 1$,  
    \[
    \psi_I(\overline{\Erq})R = \overline{I^r}.
    \]
\end{theorem}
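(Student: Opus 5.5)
The plan is to factor $\psi_I$ into elementary ring maps of two kinds and check that each preserves integral closures: maps that send a variable to $1$, which is exactly \cref{l:int-closure-1}, and maps that rename a variable as a product of new variables with disjoint supports.

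Concretely, $\psi_I$ is the composite of two maps. First comes $\alpha\colon S_{[q]}\to S_{[q]}$ (or onto a smaller polynomial ring), which sends $y_A\mapsto 1$ whenever $\unsure_I(A)=\emptyset$ and fixes the other $y_A$. Then comes $\beta$, which sends each surviving $y_A$ to the square-free monomial $\prod_{x_k\in\unsure_I(A)}x_k$. By \cref{l:Lr-extremal}(4) the images of distinct surviving variables are pairwise coprime. The variables of $R$ not appearing in any $m_j$ play no role: $\overline{I^r}$ is extended from the subring generated by the variables in $\bigcup_j\supp(m_j)$, since integral closure of monomial ideals commutes with adjoining variables, as is clear from \cref{l:integral-closure-basics}(4). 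So we may ignore them. Applying \cref{l:int-closure-1} once for each variable sent to $1$, in succession, gives $\alpha(\overline{\Erq})=\overline{\alpha(\Erq)}$ (extended to the appropriate ring).

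It remains to show $\beta(\overline{J})R=\overline{\beta(J)R}$ for any monomial ideal $J$ in the variables $y_A$ that survive $\alpha$. Once this is done, \cref{l:Lr-extremal}(3) gives $\beta(\alpha(\Erq))R=I^r$, and the theorem follows.

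For $\beta$, I will work with exponent vectors via \cref{l:integral-closure-basics}. The map $\beta$ sends an exponent vector $(c_A)_A$ to the vector $\bb$ with $b_k=c_A$ for all $x_k\in\unsure_I(A)$. This map is linear and injective, so it preserves convex combinations and the componentwise order $\succcurlyeq$.

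The inclusion $\subseteq$ follows from criterion (3): if $f^i=u_1\cdots u_i$ with $u_j\in J$, then applying $\beta$ gives $\beta(f)^i=\beta(u_1)\cdots\beta(u_i)$ with each $\beta(u_j)\in\beta(J)R$.

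For $\supseteq$, take a monomial $\bx^\bb\in\overline{\beta(J)R}$. By criterion (4), $\bb\succcurlyeq\sum\gamma_j\beta(\ba_j)$, where the $\bx^{\beta(\ba_j)}$ are the generators $\beta(\bz^{\ba_j})$. For each surviving $A$, set $c_A=\min_{x_k\in\unsure_I(A)}b_k$. Then $\bc\succcurlyeq\sum\gamma_j\ba_j$ coordinatewise, because all $x_k\in\unsure_I(A)$ carry the same value $\sum_j\gamma_j a_{jA}$ on the right-hand side. Hence $\by^\bc\in\overline J$, and $\beta(\by^\bc)$ divides $\bx^\bb$, which gives $\bx^\bb\in\beta(\overline J)R$.

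The main obstacle I expect is the bookkeeping in this last step. One has to make sure that the "min over the cluster" construction really reproduces a monomial of $\overline J$ dividing the given one. One also has to handle generators of $\beta(J)R$ that become non-minimal. As in \cref{l:int-closure-1}, this is handled by allowing all images $\beta(\bz^{\ba_j})$ in the convex combination, with $\gamma_j=0$ for those that are not minimal. A secondary point is to check that applying \cref{l:int-closure-1} repeatedly is legitimate, i.e. that after each deletion we still have a monomial ideal in a polynomial ring. This is immediate.
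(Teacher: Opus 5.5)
Your proposal is correct and follows the paper's proof. You use the same factorization of $\psi_I$: a map sending the $y_A$ with $\unsure_I(A)=\emptyset$ to $1$, handled by repeated use of \cref{l:int-closure-1}, followed by a map sending each surviving $y_A$ to a square-free monomial, with pairwise disjoint supports by \cref{l:Lr-extremal}(4). The only difference is that the paper cites \cite[Proposition~2]{IrenaYassemi} for this second map, whereas your ``minimum over each cluster'' argument via \cref{l:integral-closure-basics}\eqref{i:int-4} proves it directly and correctly. (Your worry about non-minimal generators is moot there, since $\beta$ reflects divisibility of monomials.)
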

\begin{proof}
 Let $\SqI=\sfk[y_A\colon \psi_I(y_A)\ne 1]$ be a subring of $S_{[q]}$. We write $\psi_I = \psi_1 \circ \psi_2$ where
         \begin{align*}
             \psi_2\colon S_{[q]}\to \SqI  &\quad {\mbox{\rm is given by}} \quad \psi_2(y_A)=\begin{cases} 
                 1 & \text{if } \psi_I(y_A)=1\\
                 y_A & \text{otherwise;}
             \end{cases}\\
             \psi_1\colon \SqI \to R &\quad {\mbox{\rm is given by}} \quad   \psi_1(y_A) = \psi_I(y_A).
         \end{align*}
Let $\mathcal J=\psi_2(\Erq)$ and note that $\mathcal J$ is an ideal in $\SqI$ since $\psi_2$ is surjective. 
A repeated application of \cref{l:int-closure-1} gives 
\begin{equation}
\label{R1}
\psi_2(\overline{\Erq})=\overline{\mathcal J}.
\end{equation}
By \cref{l:Lr-extremal}(4), $\psi_1$ takes each of the variables of $S_{[q],I}$ into a product of distinct variables
in $R$, and different variables of $S_{[q]}$ are mapped to products with disjoint support. Hence \cite[Proposition~2]{IrenaYassemi} 
gives \begin{equation}
\label{R2}
\psi_1(\overline{\mathcal J})R=\overline{\psi_1(\mathcal J)R}.
\end{equation}
Finally, \cref{l:Lr-extremal}\eqref{i:psi-intersection3} gives
\begin{equation}
\label{R3}
\psi_1(\mathcal J)R=\psi(\Erq)R=I^r\,.
\end{equation}
The desired conclusion follows from combining \cref{R1,R2,R3}. 
 \end{proof}

The following corollary shows how $\psi_I$ can be used to check the equality, or lack thereof, of ordinary powers and their integral closures of a given monomial ideal. Since the statement follows immediately from \cref{thm:integral-closure-extremal} using basic facts from \cref{l:Lr-extremal}, we do not include a proof. 

\begin{corollary}\label[corollary]{c:integralclosurepsi}
Let $I$ be an ideal in the ring $R$ minimally generated by $q\ge 1$ square-free monomials and let $r\ge 1$. Then 
$\overline{I^r}=I^r$ if and only if        
    $\psi_I(m) \in I^r$ for all  $m\in \mingens(\overline{\Erq}) \setminus \mingens(\Erq)$. In particular, 
$$\overline{I^r}=I^r+        
    \psi_I(\overline{\Erq}\smallsetminus  \Erq)R.$$
\end{corollary}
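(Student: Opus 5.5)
The plan is to derive everything from \cref{thm:integral-closure-extremal} together with parts (1)--(3) of \cref{l:Lr-extremal}, proving the displayed equality first and then reading off the equivalence from it.

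\textbf{Step 1 (decomposition of $\overline{\Erq}$).} Since $\overline{\Erq}$ is a monomial ideal containing $\Erq$, every minimal generator of $\overline{\Erq}$ either lies in $\Erq$ or does not. A minimal generator $m$ of $\overline{\Erq}$ that lies in $\Erq$ must itself be a minimal generator of $\Erq$. Indeed, $m$ is divisible by some generator $\pmea$ of $\Erq$. That generator lies in $\overline{\Erq}$, so minimality of $m$ in $\overline{\Erq}$ forces $m=\pmea$. Hence
\[
\overline{\Erq}=\Erq+\big(\mingens(\overline{\Erq})\setminus\mingens(\Erq)\big),
\]
and the generators in the second summand are exactly the monomials of $\overline{\Erq}\smallsetminus\Erq$ that are minimal. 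More crudely, $\overline{\Erq}=\Erq+(\overline{\Erq}\smallsetminus\Erq)$ holds trivially, since the right side is generated by all monomials of $\overline{\Erq}$.

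\textbf{Step 2 (apply $\psi_I$).} Since $\psi_I$ is a ring homomorphism, the extension of the image of a sum of ideals generated by monomials is the sum of the extensions. This gives
\[
\overline{I^r}=\psi_I(\overline{\Erq})R=\psi_I(\Erq)R+\psi_I(\overline{\Erq}\smallsetminus\Erq)R=I^r+\psi_I(\overline{\Erq}\smallsetminus\Erq)R,
\]
using \cref{thm:integral-closure-extremal} for the first equality and \cref{l:Lr-extremal}(3) for the last. This is the displayed formula. The same computation applied to the decomposition of Step 1 gives
\[
\overline{I^r}=I^r+\big(\psi_I(m)\colon m\in\mingens(\overline{\Erq})\setminus\mingens(\Erq)\big)R.
\]

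\textbf{Step 3 (the equivalence).} If $\psi_I(m)\in I^r$ for every $m\in\mingens(\overline{\Erq})\setminus\mingens(\Erq)$, the last display collapses to $\overline{I^r}=I^r$. Conversely, if $\overline{I^r}=I^r$, then every $\psi_I(m)$ with $m\in\overline{\Erq}$ lies in $\psi_I(\overline{\Erq})R=\overline{I^r}=I^r$, in particular for the listed $m$.

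There is no real obstacle here. The only point needing care is the bookkeeping in Step 1: the minimal generators of $\overline{\Erq}$ that lie in $\Erq$ are exactly the generators $\pmea$. This ensures that the generating set of $\overline{\Erq}$ splits cleanly, so that only the "new" generators need to be tested.
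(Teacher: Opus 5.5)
Your proof is correct and is exactly the argument the paper has in mind: the paper omits the proof, saying only that the statement follows immediately from \cref{thm:integral-closure-extremal} together with \cref{l:Lr-extremal}(3), and these are precisely the two ingredients you combine. The bookkeeping in your Step 1 is harmless but not actually needed for the decomposition. A minimal generator of $\overline{\Erq}$ either lies in $\mingens(\Erq)\subseteq\Erq$ or lies in $\mingens(\overline{\Erq})\setminus\mingens(\Erq)$, so the splitting holds automatically.
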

 
In general, it is well-known that $I^r \subseteq \overline{I^r}$. As a way to measure how the integral closure of a power differs from the ordinary power, we define an invariant that is the integral closure counterpart to the better-known symbolic defect introduced in~\cite{GGSVT2019}. A similar notion of integral symbolic defect was introduced by Oltsik in~\cite{O2024}.

\begin{definition}\label[definition]{d:idefect}
Let $I$ be a homogeneous ideal in a polynomial ring $R$ and let $r$ be a positive integer. The \emph{$r$-th integral defect} of $I$ is 
$$
    \idefect(r, I) = \mu\left(\overline{I^r}/I^r\right),
$$
where $\mu$ denotes the minimal number of generators as an $R$-module.
\end{definition}
In this paper, we will only consider the case where $I$ is a monomial ideal.
We now show how we can use extremal ideals and the map $\psi_I$ to determine the additional generators of $\overline{I^r}$ and provide a uniform bound for the integral defect.

\begin{theorem}\label{thm:integral-defect}
    Let $I$ be an ideal in the ring $R$ minimally generated by $q$ square-free monomials, and let $r\ge 1$. Then 
    $$\idefect(r, I) \leq \idefect(r, \E_q).$$
\end{theorem}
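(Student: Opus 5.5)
For monomial ideals $J\subseteq L$, the quotient $L/J$ is minimally generated by the images of those minimal monomial generators of $L$ that do not lie in $J$. Hence
$$\idefect(r,I)=\#\big(\mingens(\overline{I^r})\ssm I^r\big) \qand \idefect(r,\E_q)=\#\big(\mingens(\overline{\Erq})\ssm \Erq\big).$$
The plan is to build a surjection from the second set onto a set containing the first.

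The key step uses \cref{thm:integral-closure-extremal}: $\overline{I^r}=\psi_I(\overline{\Erq})R$. So $\overline{I^r}$ is generated by the monomials $\psi_I(\mu)$ with $\mu\in\mingens(\overline{\Erq})$. Every minimal monomial generator of a monomial ideal must occur in any monomial generating set. Thus each $w\in\mingens(\overline{I^r})$ equals $\psi_I(\mu)$ for some $\mu\in\mingens(\overline{\Erq})$.

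Now suppose in addition $w\notin I^r$. We claim the chosen $\mu$ is not in $\Erq$. Indeed, if $\mu\in\Erq$, then $w=\psi_I(\mu)\in\psi_I(\Erq)R=I^r$ by \cref{l:Lr-extremal}\eqref{i:psi-intersection3}, a contradiction. (Here $\psi_I$ sends monomials to monomials and multiples to multiples.) Therefore the assignment $\mu\mapsto\psi_I(\mu)$, restricted to those $\mu\in\mingens(\overline{\Erq})\ssm\Erq$ whose image is a minimal generator of $\overline{I^r}$ not in $I^r$, hits every element of $\mingens(\overline{I^r})\ssm I^r$. So the latter set has cardinality at most $\#(\mingens(\overline{\Erq})\ssm\Erq)$, which gives the inequality.

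The only real point requiring care is the first step: the identification of $\idefect$ with a count of minimal generators of $L$ outside $J$. This is a standard fact. The images in $L/J$ of the generators lying in $J$ are zero. A monomial generator of $L$ outside $J$ cannot be an $R$-combination of the others modulo $J$; to see this, restrict to its multidegree, using that $L/J$ is $\NN^n$-graded with one-dimensional graded pieces, and apply the graded Nakayama lemma. Everything else is immediate from \cref{thm:integral-closure-extremal} and \cref{l:Lr-extremal}, in the same way that \cref{c:integralclosurepsi} is.
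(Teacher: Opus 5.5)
Your proof is correct and follows the paper's argument: you lift each minimal generator of $\overline{I^r}$ outside $I^r$ to a minimal generator of $\overline{\Erq}$ via \cref{thm:integral-closure-extremal}, and use $\psi_I(\Erq)R=I^r$ to see that the lift lies outside $\Erq$. Your added justification that $\idefect(r,I)$ counts the minimal generators of $\overline{I^r}$ not in $I^r$ fills in the step the paper asserts with ``Equivalently''.
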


\begin{proof}
  Set $d=\idefect(r,I)$. Then there exist monomials 
    $$M_1,\dots, M_d\in \overline{I^r}\setminus I^r$$ 
    such that their homomorphic images $\overline{M_1},\dots, \overline{M_d}$ minimally generate $\overline{I^r}/I^r$. Equivalently, $$M_1,\dots, M_d \in \mingens(\overline{I^r})\setminus I^r.$$ Recall from \cref{thm:integral-closure-extremal} that $\psi_I(\overline{\Erq})R=\overline{I^r}$. In particular, this implies that $\{\psi_I(E)\colon E\in \mingens(\overline{\Erq}) \}$ generates $\overline{I^r}$. As $\psi_I(\overline{\Erq})$ is a set of monomials, we can choose a subset that is a minimal monomial generating set of $\overline{I^r}$. In other words, there exist monomials $E_1,\dots, E_d\in \mingens(\overline{\Erq})$ such that $\psi_I(E_i)=M_i$ for any $i\in [d]$. Moreover, recall from \cref{l:Lr-extremal}(3) that $\psi_I(\Erq)R=I^r$. Since $M_i\notin I^r$ for any $i\in [d]$, we must have $E_i\notin \Erq$ for any $i\in [d]$. Therefore, by definition, we have $d\leq \idefect(r,\E_q)$, as~desired. 
\end{proof}

\cref{thm:integral-closure-extremal,thm:integral-defect} show that the study of integral closures of powers of $\E_q$ can be an extremely useful tool for understanding integral closures of powers of arbitrary square-free monomial ideals. Thus we will explore means of identifying elements of the integral closure of $\Erq$  in small cases.  First, for $q=3$, we work towards showing that $\E_3$ is normal (\cref{t:E32}).

We start by translating the criterion (1)$\iff$(4) in \cref{l:integral-closure-basics}. 
 We define  basis vectors for $\NN^{2^q-1}$ by $\be_A$ for $\emptyset \neq A \subseteq [q]$ with 
 \begin{equation}\label{eq:eA}
     \be_A: 2^{[q]}\ssm \{\emptyset\} \to \{0,1\} \qwhere \be_A(B)=\begin{cases}1 \qif B=A\\0 \qif B \neq A.
\end{cases}
 \end{equation}
 This allows us to write any monomial $\bm \in S_{[q]}$ as
\begin{equation}\label{eq:additivenotation} 
\bm = \prod_{\emptyset \ne A\subseteq [q]} y_A^{b_A} = \by^{\bb}, \qwhere \bb 
=\sum_{\emptyset \ne A \subseteq [q]} b_A \be_A.
\end{equation} Additionally, for $\ba=(a_1,\dots, a_q)\in \Nrq$ we define for each $\emptyset \ne A \subseteq [q]$
\begin{equation}
\label{eq:|a|}
|\ba|_A=\sum_{i\in A}a_i.
\end{equation}
 Then
\begin{equation}
\label{eq:ea}
\pmea=\e_1^{a_1}\cdots\e_q^{a_q}=\prod_{\emptyset\ne A\subseteq [q]}y_A^{|\ba|_A}
=\by^{\widetilde{\ba}} \qwhere \widetilde{\ba}=\sum_{\emptyset \ne A \subseteq [q]} |\ba|_A\be_A\,.
\end{equation}

\begin{example}
Let $q=3$. Consider $\ba = (2,1,3)$. Then 
$$\pmea = \e_1^2\e_2\e_3^3 = (y_{_1}^2y_{_{12}}^2y_{_{13}}^2y_{_{123}}^2)(y_{_2}y_{_{12}}y_{_{23}}y_{_{123}})(y_{_3}^3y_{_{13}}^3y_{_{23}}^3y_{_{123}}^3) = y_{_1}^2y_{_2}y_{_3}^3y_{_{12}}^3y_{_{13}}^5y_{_{23}}^4y_{_{123}}^6.$$
Note that $\tilde{\ba}$ corresponds to the exponent vector of the final expression. In general, there is not a fixed order imposed on the $y_A$, so it is useful to view $\tilde{\ba}$ using the summation notation:
$$
\tilde{\ba}=2\be_1+\be_2+3\be_3+3\be_{12}+5\be_{13}+4\be_{23}+6\be_{123}.
$$

\end{example}

Using the notation above, we now prove the aforementioned translation, providing a concrete means of viewing membership in $\overline{\Erq}$ in terms of a system of linear equations and inequalities.

\begin{lemma}\label[lemma]{l:closure-r-alpha}
   Let $\bb=\sum_{\emptyset \ne A\subseteq [q]} b_A\be_A\in \mathbb N^{2^q-1}$  and $r\ge 1$. 
    Then $\by^\bb \in \overline{{\E_q}^r}$  if and only if there exist rational non-negative numbers $\alpha_1,\ldots,\alpha_q$ such that 
    \begin{enumerate}
    \item\label{i:closure-1} $\alpha_1 + \cdots + \alpha_q = r$ and 
    \item\label{i:closure-2} $b_A \geq \sum_{i \in A} \alpha_i \qforall \emptyset \ne A \subseteq [q].$
    \end{enumerate}
\end{lemma}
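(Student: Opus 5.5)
The plan is to apply the criterion (1)$\iff$(4) of \cref{l:integral-closure-basics} to $J=\Erq$, using the explicit exponent vectors of its minimal generators, and then to pass from convex weights on generators to the weights $\alpha_i$.

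By \cref{l:Lr-extremal}(1), $\Erq$ is minimally generated by the monomials $\pmea=\by^{\widetilde{\ba}}$ with $\ba\in\Nrq$. By \cref{eq:ea}, the $A$-coordinate of $\widetilde{\ba}$ is $|\ba|_A$. Condition (4) of \cref{l:integral-closure-basics} therefore says that $\by^\bb\in\overline{\Erq}$ if and only if there are nonnegative rationals $\gamma_\ba$, indexed by $\ba\in\Nrq$, with $\sum_\ba\gamma_\ba=1$ and
\[
b_A\ \ge\ \sum_{\ba\in\Nrq}\gamma_\ba|\ba|_A \qforall \emptyset\ne A\subseteq[q].
\]

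For the forward direction, given such $\gamma_\ba$, I would set $\alpha_i=\sum_{\ba}\gamma_\ba a_i$. These are nonnegative rationals. Their sum is $\sum_\ba\gamma_\ba\sum_i a_i=r\sum_\ba\gamma_\ba=r$. Since $|\ba|_A$ is linear in $\ba$, we get $\sum_\ba\gamma_\ba|\ba|_A=\sum_{i\in A}\alpha_i$, so condition (2) of the lemma follows directly.

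The converse is the only step needing care: we must write a rational point $\boldsymbol\alpha=(\alpha_1,\dots,\alpha_q)$ with $\alpha_i\ge0$ and $\sum\alpha_i=r$ as a rational convex combination of lattice points of $\Nrq$. The explicit choice is $\gamma_{r\be_i}=\alpha_i/r$ for the points $r\be_i\in\Nrq$ (the $i$-th standard vector scaled by $r$), and $\gamma_\ba=0$ otherwise. These weights are nonnegative rationals summing to $1$. Moreover $\sum_i(\alpha_i/r)\,|r\be_i|_A=\sum_{i\in A}\alpha_i\le b_A$, which is exactly condition (4), so $\by^\bb\in\overline{\Erq}$. This is the vertex decomposition of the dilated simplex, so no further rounding or Carathéodory argument is needed.
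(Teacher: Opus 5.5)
Your proposal is correct and follows the paper's proof essentially step for step. You translate membership via criterion (4) of \cref{l:integral-closure-basics} applied to the minimal generators $\pmea$, take $\alpha_i=\sum_{\ba}\gamma_\ba a_i$ for the forward direction, and for the converse put the weights $\gamma_{r\be_i}=\alpha_i/r$ on the points $r\be_i$, exactly as the paper does.
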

\begin{proof}
The minimal generators of $\Erq$ have the form  
$$ \pmea=\e_1^{a_1}\cdots\e_q^{a_q} \qwhere
\ba=(a_1,\dots, a_q)\in \Nrq.$$  
By \cref{l:integral-closure-basics} and using \cref{eq:ea}, we have $\by^\bb \in \overline{{\E_q}^r}$ for $\bb=\sum_{\emptyset\ne A\subseteq [q]}b_A\be_A$ if and only if there exist non-negative rational numbers $\gamma_{\ba}$ with $\ba\in \Nrq$ such that 
\begin{equation}\label{eq:bA-r-gamma}
\sum_{\ba\in \Nrq}\gamma_{\ba} = 1
\qand  
            b_A \geq \sum_{\ba\in \Nrq} \gamma_{\ba}|\ba|_A\qforall \emptyset \ne A \subseteq [q].
    \end{equation}
    
Before explicitly dealing with the two implications in our statement, we record some computations that will help connect \cref{eq:bA-r-gamma} to the inequality in~\eqref{i:closure-2}. Assume there exist non-negative rational numbers $\gamma_{\ba}$ for $\ba \in \Nrq$, and $\alpha_i$ for $i \in [q]$, that satisfy
$$(\alpha_1, \ldots, \alpha_q) = \sum_{\ba\in \Nrq} \gamma_\ba \ba.$$
Viewing this vector equation component-wise yields a system of $q$ equalities
    \begin{equation}\label{eq:bi-r-gamma}
        \alpha_i = \sum_{\ba=(a_1, \dots, a_q) \in \Nrq} \gamma_\ba a_i \qforall i\in [q].
    \end{equation}

    Using \cref{eq:bi-r-gamma}, we compute the right hand side of the inequality in \cref{eq:bA-r-gamma} for a set $\emptyset \ne A \subseteq [q]$ as follows: 
\begin{equation}\label{eq:coef}
\sum_{\ba\in \Nrq} \gamma_{\ba}|\ba|_A=
\sum_{\substack{\ba\in \Nrq\\\ba =  (a_1, \ldots, a_q)}}\sum_{i\in A}\gamma_{\ba}a_i=
\sum_{i\in A}\sum_{\substack{\ba\in \Nrq\\\ba =  (a_1, \ldots, a_q)}}\gamma_{\ba}a_i=\sum_{i\in A}\alpha_i.
\end{equation}
In particular, when $A = [q]$, then $|\ba|_{[q]}=r$ for all $\ba=(a_1, \dots, a_q)\in \Nrq$, so \cref{eq:coef} (in reverse order) simplifies to 
\[
    \sum_{i \in [q]} \alpha_i = r \sum_{\ba\in \Nrq} \gamma_\ba.
\]
Consequently, given $(\gamma_{\ba})_{\ba\in \Nrq}$ and $(\alpha_i)_{i \in [q]}$ satisfying \cref{eq:bi-r-gamma}, we have: 
\begin{align}
\label{eq:gamma-alpha1}
    \sum_{\ba \in \Nrq}\gamma_{\ba} = 1 & \quad\iff \quad\sum_{i\in [q]}\alpha_i=r\\
    \label{eq:gamma-alpha2}
      b_A \geq \sum_{\ba \in \Nrq} \gamma_{\ba}|\ba|_A &\quad \iff\quad  b_A\ge \sum_{i\in A}\alpha_i.
    \end{align}
    
Now assume $\by^\bb \in \overline{{\E_q}^r}$, and hence there exist non-negative rational numbers $\gamma_{\ba}$ satisfying \cref{eq:bA-r-gamma}. Define non-negative rational numbers $\alpha_i$ by \cref{eq:bi-r-gamma}. These integers then satisfy the conditions~\eqref{i:closure-1} and~\eqref{i:closure-2}, in view of \cref{eq:gamma-alpha1,eq:gamma-alpha2}, as desired. 
    
    Conversely, if there exist $\alpha_i$'s satisfying conditions~\eqref{i:closure-1} and~\eqref{i:closure-2}, then we define
    \[
 \gamma_{\ba}=\begin{cases}
        \frac{1}{r}\alpha_j &\text{if $\ba=r\be_j$}\\
        0 &\text{otherwise}
    \end{cases}
    \]
    for $\ba \in \Nrq$ where $\be_i$ is the $i$th standard basis vector in ${\mathbb R}^q$. Then for each $i\in [q]$:
    \[
        \sum_{\ba=(a_1, \ldots,a_q) \in \Nrq} \gamma_{\ba} a_i = \sum_{j \in [q]} \gamma_{r\be_j} (r\be_j)_i = r\gamma_{r\be_i} = \alpha_i,
    \]
    so \cref{eq:bi-r-gamma} holds. Then 
    \cref{eq:gamma-alpha1,eq:gamma-alpha2} show that \cref{eq:bA-r-gamma} holds. Thus $\by^\bb \in \overline{{\E_q}^r}$.
\end{proof}

Consider a generator $\by^\bb$ of $\overline{\Erq}$, where $\bb=\sum_{\emptyset \ne A\subseteq [q]} b_A\be_A$. We next show there is a relationship among the exponents of variables $y_A$, that is, the integers $b_A$, when the sets indexing the variables satisfy a containment relation.

\begin{lemma}\label[lemma]{l:bXY}
    If $\by^\bb$ is a minimal generator of $\overline{{\E_q}^r}$, then $\emptyset\ne S \subseteq T \subset [q]$ implies $b_S \leq b_T$.
\end{lemma}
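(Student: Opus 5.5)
Argue by contradiction, using the linear-programming description of membership in $\overline{\Erq}$ from \cref{l:closure-r-alpha}. Let $\by^\bb$ be a minimal generator of $\overline{\Erq}$, and suppose $\emptyset\ne S\subseteq T\subset [q]$ with $b_S>b_T$. The plan is to produce a monomial in $\overline{\Erq}$ that strictly divides $\by^\bb$, which contradicts minimality.

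First, \cref{l:closure-r-alpha} gives non-negative rationals $\alpha_1,\ldots,\alpha_q$ with $\sum_i\alpha_i=r$ and $b_A\ge\sum_{i\in A}\alpha_i$ for every nonempty $A\subseteq[q]$. Define a new exponent vector $\bb'$ by $b'_S=b_T$ and $b'_A=b_A$ for all $A\ne S$. Since $b_S>b_T$ and the $b$'s are integers, $\bb'\in\NN^{2^q-1}$, and $\by^{\bb'}$ properly divides $\by^\bb$.

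Second, check that $\by^{\bb'}\in\overline{\Erq}$ using the same $\alpha_i$. Condition (1) is unchanged. Condition (2) only needs checking at $A=S$. Because $S\subseteq T$ and all $\alpha_i\ge 0$,
\[
b'_S=b_T\ \ge\ \sum_{i\in T}\alpha_i\ \ge\ \sum_{i\in S}\alpha_i .
\]
So \cref{l:closure-r-alpha} gives $\by^{\bb'}\in\overline{\Erq}$. This contradicts the minimality of $\by^\bb$, and hence $b_S\le b_T$.

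I do not expect a real obstacle. The only point needing care is that every variable $y_A$ can be lowered independently: the constraints in \cref{l:closure-r-alpha} are indexed separately by each $A$, with no coupling between the $b_A$ beyond their lower bounds. The hypothesis $T\subset[q]$ (proper inclusion) is not used by this argument, so the same proof works for $T=[q]$.
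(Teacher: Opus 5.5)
Your proof is correct and follows the paper's argument. Both take the $\alpha_i$ from \cref{l:closure-r-alpha} and use the chain $b_T \ge \sum_{i\in T}\alpha_i \ge \sum_{i\in S}\alpha_i$ to place a proper divisor of $\by^\bb$ in $\overline{{\E_q}^r}$; the paper lowers $b_S$ by one (using $b_S-1\ge b_T$) where you lower it all the way to $b_T$, and your remark that the properness of $T\subset[q]$ is never used applies equally to the paper's proof.
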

\begin{proof}
    Assume $\by^\bb$ is a minimal generator of $\overline{{\E_q}^r}$, and that $S \subseteq T$.  Assume by way of contradiction that $b_S >b_T$.  Since $b_T$ and $b_S$ are integers, $b_S - 1 \geq b_T$. Since  $\by^\bb \in \overline{{\E_q}^r}$, by \cref{l:closure-r-alpha}, there exist non-negative rational numbers $\alpha_1, \ldots, \alpha_q$ satisfying the two conditions of \cref{l:closure-r-alpha}.
    Then
    \[
        b_S - 1 \geq b_T \geq \sum_{i \in T} \alpha_i \geq \sum_{i \in S} \alpha_i,
    \]
     where the third inequality follows from $S \subseteq T$ and each $\alpha_i$ being non-negative.

    As a result, the vector $\bb' = \bb - \be_S$ also satisfies the conditions of \cref{l:closure-r-alpha} with $\alpha_1,\ldots,\alpha_q$, and so $\by^{\bb'} \in \overline{{\E_q}^r}$.  But $\by^{\bb'}$ divides $\by^{\bb}$, which contradicts $\by^\bb$ being a minimal generator of $\overline{{\E_q}^r}$, as desired.
\end{proof}

In order to prove ${\E_3}^r = \overline{{\E_3}^r}$ in \cref{t:E32} below, we will use the \emph{analytic spread}, $\ell(I)$, of a monomial ideal $I$. For a general definition of analytic spread, see~\cite[Definition 5.1.5]{Huneke-Swanson2006}. In the monomial case when the generators are of the same degree, the analytic spread can be computed as the rank of a matrix built from the monomial generators, as described in the proof below.

\begin{lemma}
\label[lemma]{l:analytic spread}
The analytic spread of $\E_q$ is given by $\ell(\E_q) = q$. 
\end{lemma}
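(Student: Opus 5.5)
Since all generators $\e_1,\ldots,\e_q$ of $\E_q$ have the same degree $2^{q-1}$, I will use the standard fact for monomial ideals generated in a single degree: the fiber cone $\mathcal F(\E_q)=\bigoplus_{r\ge 0}\Erq/\m\Erq$ is isomorphic to the toric algebra $\sfk[\e_1,\ldots,\e_q]\subseteq S_{[q]}$. Its Krull dimension, which is the analytic spread, equals the rank of the $q\times(2^q-1)$ matrix $M$ whose $i$-th row is the exponent vector of $\e_i$. Concretely, $M$ has rows indexed by $i\in[q]$, columns indexed by $\emptyset\ne A\subseteq[q]$, and entry $M_{i,A}=1$ if $i\in A$ and $0$ otherwise. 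I will cite \cite{Huneke-Swanson2006} for the equality $\ell(\E_q)=\operatorname{rank}M$.

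It then suffices to show that $\operatorname{rank} M = q$. Since $M$ has only $q$ rows, the rank is at most $q$. For the lower bound, I will restrict to the $q$ columns indexed by the singletons $A=\{j\}$. The entry of this submatrix in row $i$ and column $\{j\}$ is $1$ exactly when $i=j$, so the submatrix is the $q\times q$ identity matrix. This gives rank exactly $q$, hence $\ell(\E_q)=q$.

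As a cross-check, or as an alternative route, note that the monomials $\e_1,\ldots,\e_q$ are algebraically independent, because $\e_i$ is the only generator divisible by $y_{\{i\}}$. Hence $\sfk[\e_1,\ldots,\e_q]$ is a polynomial ring in $q$ variables and has dimension $q$.

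The only nontrivial point is justifying the identification of the analytic spread with this rank, which requires the equigenerated hypothesis. I expect to handle it by citation rather than proof. The rank computation itself is immediate.
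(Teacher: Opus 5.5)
Your proposal is correct and matches the paper's proof. The paper also uses the equigenerated criterion $\ell(\E_q)=\operatorname{rank}$ of the generator--variable incidence matrix, citing Villarreal's book rather than Huneke--Swanson. It likewise gets rank $q$ from the identity minor on the singleton columns $y_{\{1\}},\ldots,y_{\{q\}}$, and your algebraic-independence remark is a valid equivalent way to say the same thing.
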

\begin{proof}
It is well known (see~\cite[Proposition 7.1.7, Exercise 7.4.10]{villarrealbook}) that when $I$ is a monomial ideal generated by elements of the same degree, then the analytic spread $\ell(I)$ can be computed as the rank of a matrix where the columns are indexed by the variables of $R$, and the rows are indexed by generators of $I$, and the $ij$ entry of this matrix is $1$ if the generator $i$ is divisible by variable $j$, and $0$ otherwise. For the extremal ideal $\E_q$, this matrix has rank $q$ since the minor formed by selecting the columns indexed by $y_i$ for $1 \le i \le q$ forms the $q \times q$ identity matrix. Since every $\e_i$ has the same degree, we conclude $\ell(\E_q) = q$ for every $q$.   
\end{proof}

\begin{remark}
\label[remark]{Singla}
Let $I$ be a monomial ideal with analytic spread $\ell$. Singla~\cite[Corollary 5.3]{S2007} showed that if the ideals  $I, I^2, \dots, I^{\ell - 1}$ are integrally closed, then $I$ is normal, that is, $I^r$ is integrally closed for all $r\ge 1$. 
In particular, any ideal $I$ generated by at most two square-free monomials is normal.  Indeed, we have $\ell(I)\le 2$, and thus Singla's result shows that it suffices to argue that $I$ is integrally closed. This is true, since any square-free monomial ideal is radical, and hence integrally closed by \cite[Remark~1.1.3(4)]{Huneke-Swanson2006}. 
\end{remark}

By \cref{Singla}, in order to show that $\E_3$ is normal we only need to check that ${\E_3}^2$ is integrally closed. This could be shown by a Macaulay2~\cite{M2} computation, but we include a proof in \cref{t:E32} in order to show how the technique of focusing on exponents can be used. This provides a relatively small template of how the system of inequalities will be used in later proofs.

\begin{proposition}
\label[proposition]{t:E32}
   For every $r\ge 1$ we have ${\E_3}^r = \overline{{\E_3}^r}$. In other words, $\E_3$ is normal.  
\end{proposition}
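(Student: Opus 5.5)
By \cref{l:analytic spread} we have $\ell(\E_3)=3$. By Singla's result recalled in \cref{Singla}, $\E_3$ is normal as soon as $\E_3$ and ${\E_3}^2$ are integrally closed. $\E_3$ is square-free, hence radical and integrally closed. So the whole task reduces to proving $\overline{{\E_3}^2}={\E_3}^2$.

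Let $\by^\bb$ be a minimal generator of $\overline{{\E_3}^2}$. By \cref{l:closure-r-alpha} there are rationals $\alpha_1,\alpha_2,\alpha_3\ge 0$ with $\alpha_1+\alpha_2+\alpha_3=2$ and $b_A\ge\sum_{i\in A}\alpha_i$ for every nonempty $A\subseteq[3]$. Since the $b_A$ are integers, $b_A\ge\lceil\sum_{i\in A}\alpha_i\rceil$. In particular $b_{123}\ge 2$, and \cref{l:bXY} gives monotonicity of the $b_A$ under inclusion. The goal is to exhibit $\ba\in\N^2_3$ with $|\ba|_A\le b_A$ for all $A$, so that $\pmea$ divides $\by^\bb$ and $\by^\bb\in{\E_3}^2$.

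I will split into cases according to the pair sums $\alpha_i+\alpha_j=2-\alpha_k$.

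\textbf{Case 1: some $\alpha_k\ge 1$, say $\alpha_1\ge1$.} Then $b_1\ge1$, $b_{12},b_{13}\ge1$, $b_{123}\ge 2$ and $b_{23}\ge\lceil\alpha_2+\alpha_3\rceil$.
\begin{itemize}
\item If $\alpha_1=2$, take $\ba=(2,0,0)$: it needs $b_1,b_{12},b_{13}\ge2$, which holds since $b_A\ge\alpha_1=2$ for every $A\ni 1$.
\item If $1\le\alpha_1<2$, then $\alpha_2+\alpha_3>0$, so one of $\alpha_2,\alpha_3$ is positive, say $\alpha_2>0$. Take $\ba=(1,1,0)$. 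It requires $b_1\ge1$, $b_2\ge1$, $b_3\ge0$, $b_{12}\ge2$, $b_{13}\ge1$, $b_{23}\ge1$ and $b_{123}\ge2$.
\end{itemize}
Each of these follows by rounding up: $b_2\ge\lceil\alpha_2\rceil=1$, and $b_{12}\ge\lceil\alpha_1+\alpha_2\rceil\ge2$ because $\alpha_1+\alpha_2>1$.

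\textbf{Case 2: all $\alpha_i<1$.} Then every pair sum exceeds $1$, so $b_{12},b_{13},b_{23}\ge2$ and $b_{123}\ge2$. Moreover at least two of the $\alpha_i$ are positive, since their total is $2$ and each is below $1$. If $\alpha_1,\alpha_2>0$, take $\ba=(1,1,0)$, which needs $b_1,b_2\ge1$ (true by rounding up) and $b_{12}\ge2$ (true). The other pairs are symmetric.

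The main obstacle is making sure this case analysis is exhaustive and that the integrality rounding is applied correctly at each step, in particular the strict inequalities that force $\lceil\alpha_i+\alpha_j\rceil=2$. An alternative that avoids the case analysis is a direct Macaulay2/Normaliz computation of $\overline{{\E_3}^2}$. I would nevertheless present the exponent argument above, since it is short and illustrates the linear-programming viewpoint.
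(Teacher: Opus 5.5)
Your proof is correct and follows the same route as the paper. You reduce to $r=2$ using $\ell(\E_3)=3$ and Singla's theorem (\cref{Singla}), then use \cref{l:closure-r-alpha} to exhibit $\ba\in\N^2_3$ with $\pmea\mid\by^\bb$. The only difference is the case split. The paper cases on whether some $b_i=0$, relying on the monotonicity of \cref{l:bXY} (hence on minimality of $\by^\bb$) and a counting contradiction when $b_{12}=b_{13}=b_{23}=1$. You instead case on the sizes of the $\alpha_i$ and use integer rounding, which works for every monomial in $\overline{{\E_3}^2}$ and makes your appeal to \cref{l:bXY} superfluous.
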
 
\begin{proof}
    By \cref{Singla}, we only need to check that ${\E_3}^2$ is integrally closed.
Let $\by^\bb$ be a minimal generator of $\overline{{\E_3}^2}$ , where
    \[
\by^\bb=y_{_1}^{b_1}y_{_2}^{b_2}y_{_3}^{b_3}y_{_{12}}^{b_{{12}}}y_{_{13}}^{b_{13}}y_{_{23}}^{b_{{23}}}y_{_{123}}^{b_{{123}}}. 
    \]
We show that $\by^\bb \in {\E_3}^2$. 
By \cref{l:closure-r-alpha} there exist non-negative rational numbers $\alpha_1$, $\alpha_2$, $\alpha_3$  such that the following inequalities hold for all distinct $i,j$ with $1\le i, j \le 3$ 
\begin{align*}
b_{123}&\ge \alpha_1+\alpha_2+\alpha_3=2\\
b_{ij}&\ge \alpha_i+\alpha_j \\
b_i&\ge \alpha_i. 
\end{align*}
Assume first $b_i=0$ for some $i$. Without loss of generality, say $b_1=0$. This implies $\alpha_1=0$ and $\alpha_2+\alpha_3=2$, which means that $b_{23}\ge 2$. If $b_2\ge 1$ and $b_3\ge 1$, then $b_{12}\ge 1$ and $b_{13}\ge 1$ by \cref{l:bXY}. It follows that $\e_2\e_3\mid \by^\bb$, implying $\by^\bb \in {\E_3}^2$. Hence we may assume $b_2=0$ or $b_3=0$. Without loss of generality, say $b_2=0$. Then $\alpha_2=0$ and $\alpha_3=2$. This implies $b_3, b_{13}, b_{23}, b_{123} \ge 2$. In this case $\e_3^2\mid \by^\bb$, hence  $\by^\bb \in {\E_3}^2$.

Assume now $b_i\ge 1$ for all $i$. By \cref{l:bXY}, we have then $b_{ij}\ge 1$ for all $i\ne j$.  If $b_{12} = b_{13} = b_{23} = 1$, then we have
\[
3=b_{12}+b_{13}+b_{23}\ge (\alpha_1+\alpha_2)+(\alpha_1+\alpha_3)+(\alpha_2+\alpha_3)=4,
\]
a contradiction.  We conclude that $b_{ij} \geq 2$ for some $i\ne j$. Without loss of generality, assume $b_{12} \geq 2$. Then  $b_{1}, b_{2}, b_{13}, b_{23} \geq 1$ and $b_{12}, b_{123}  \geq 2$ together imply that $\by^\bb$ is a multiple of $\e_1\e_2 \in {\E_3}^2$, and so $\by^\bb \in {\E_3}^2$. Thus ${\E_3}^2$ is integrally closed, as desired, implying that $\E_3$ is normal by~\cref{Singla}.
\end{proof}

We will show that $\E_q$ is not normal when $q\ge 4$. This is accomplished in \cref{p:not normal}, after first noting some useful facts.

\begin{remark}
\label[remark]{rem:in-or-out}
In order to check whether $\by^\bb\in \Erq$, we need to find (or prove it does not exist) $\ba\in \Nrq$ such that $\pmea\mid\by^\bb$. Using the notation in \cref{eq:|a|},  we have
\begin{enumerate}
\item $\by^\bb\in \Erq$ if and only if there exists $\ba\in \Nrq$ such that 
\[
b_A\ge |\ba|_A\qforall \emptyset\ne A\subseteq [q].
\]
\item  $\by^\bb\notin \Erq$ if and only if for every $\ba\in \Nrq$ there exists $\emptyset\ne A\subseteq [q]$ such that 
\[
b_A< |\ba|_A.
\]
\end{enumerate}
\end{remark}

For $r,q \in \mathbb{N}$ with $r \ge 2$ and $q \ge 4$, define 

\begin{equation}\label{e:newfrq}
    g(r,q)\coloneqq 
    \Big( \prod_{\scriptsize \substack{A\subseteq [q]\\ |A|\geq 1}} y_A \Big )
    \Big( \prod_{\scriptsize \substack{A\subseteq [q]\\ |A|\geq 3}} y_A  \Big ) 
    \e_1^{r-2}.
\end{equation}

\begin{proposition}
\label[proposition]{p:not normal}
If $q\ge 4$ and $r\geq 2$, then $g(r,q)$ satisfies the integral dependence equation $$z^q-(g(r,q))^q=0$$ over $\Erq$. In particular,
$g(r,q) \in \overline{{\E_q}^r}\smallsetminus {\E_q}^r$.
\end{proposition}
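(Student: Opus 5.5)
The plan is to prove the two claims separately: integrality by an explicit divisibility, and non-membership by comparing exponents using \cref{rem:in-or-out}. Throughout, write $g=g(r,q)=\by^{\bb}$. From \cref{e:newfrq} the exponents are
\[
b_A \;=\; 1+[\,|A|\ge 3\,]+(r-2)[\,1\in A\,],
\]
where $[\cdot]$ denotes the indicator of a condition.

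\emph{Integrality.} The polynomial $z^q-g^q$ is an equation of integral dependence over $\Erq$ provided $g^q\in(\Erq)^q={\E_q}^{rq}$. Indeed, its only nonzero lower coefficient is $a_q=-g^q$, and this then lies in $(\Erq)^q$. I will show this membership by exhibiting a product of $rq$ generators of $\E_q$ that divides $g^q$. Write
\[
g^q=\e_1^{q(r-2)}\cdot\Big(\prod_{|A|\ge1}y_A\prod_{|A|\ge3}y_A\Big)^{q},
\]
and compare the second factor with $(\e_1\cdots\e_q)^2=\prod_A y_A^{2|A|}$ (using \cref{eq:ea} with $\ba=(1,\dots,1)$). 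The exponent of $y_A$ in the second factor is $q$ when $|A|\le 2$ and $2q$ when $|A|\ge 3$. We need $2|A|\le q$ for $|A|\le 2$, which holds because $q\ge 4$. We also need $2|A|\le 2q$ for all $A$, which is trivial. Hence $\e_1^{q(r-2)}(\e_1\cdots\e_q)^2$ divides $g^q$. This product has $q(r-2)+2q=qr$ factors, so $g^q\in{\E_q}^{rq}$. Consequently $g\in\overline{\Erq}$.

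\emph{Non-membership.} By \cref{rem:in-or-out}, I argue by contradiction: suppose $\ba\in\Nrq$ satisfies $b_A\ge|\ba|_A$ for all $A$. The argument uses only singletons and pairs.
\begin{enumerate}
\item For $i\ne 1$ we have $b_{\{i\}}=1$, so $a_i\le 1$.
\item For distinct $i,j\ne 1$ we have $b_{\{i,j\}}=1$, so at most one index $j\ne 1$ has $a_j=1$.
\item Since $b_{\{1\}}=r-1$, we get $a_1\le r-1$.
\end{enumerate}
Because $\sum_i a_i=r$, items (1)--(3) force exactly one $j\ne1$ with $a_j=1$, and then $a_1=r-1$. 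But $b_{\{1,j\}}=r-1<r=a_1+a_j$, a contradiction. Therefore $g\notin\Erq$.

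Neither step is hard; the only point needing care is choosing the witness $(\e_1\cdots\e_q)^2$, which is exactly where the hypothesis $q\ge4$ enters.
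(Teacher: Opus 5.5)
Your proposal is correct and follows essentially the same route as the paper. For integrality you use the same witness $(\e_1\cdots\e_q)^2$. You fold in $\e_1^{q(r-2)}$ directly instead of first treating $r=2$ and then multiplying by $\e_1^{r-2}$, which has the small advantage of verifying the stated equation $z^q-g(r,q)^q=0$ literally. For non-membership you use the same test sets $\{i\}$, $\{i,j\}$, $\{1\}$, $\{1,j\}$ from \cref{rem:in-or-out}, phrased as a contradiction rather than the paper's case split.
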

\begin{proof}
    We first consider the case $r=2$. To show that $g(2,q)\in \overline{{\E_q}^2}$, it suffices to show $(g(2,q))^q\in {\E_q}^{2q}$ by \cref{l:integral-closure-basics}(3). We use \cref{rem:in-or-out}(1) and choose $\ba=(2,2,\dots, 2)\in \N_q^{2q}$. Set $\by^\bb = g(2,q)$ so, for a set $\emptyset\neq A\subseteq [q]$,  \[
    b_A=\begin{cases}
        1 & \text{if } |A|\leq 2,\\
        2&\text{otherwise.}
    \end{cases}\]
    If $|A|\leq 2$, we have
\[
 (q\bb)_A = qb_A =q \geq 4 \geq 2|A|=|\ba|_A.
\]
    On the other hand, when $|A|\geq 3$, we have
    \[
    (q\bb)_A = qb_A =2q  \geq 2|A|=|\ba|_A.
    \]
    Thus $(g(2,q))^q\in {\E_q}^{2q}$, which implies that $g(2,q)\in \overline{{\E_q}^2}$, as claimed.
    
    We now have $g(r,q)=g(2,q) \e_1^{r-2}\in \overline{\Erq}$ by \cref{l:integral-closure-basics}(3). 
    Next we show that $g(r,q)\notin \Erq$ using \cref{rem:in-or-out}~(2). Abusing notation, we set $\by^\bb=g(r,q)$. We note that for $\emptyset \neq A\subseteq [q]$, we have
    \[
    b_A=\begin{cases}
        1 & \text{if } |A|\leq 2 \text{ and } 1\notin A,\\
        2 & \text{if } |A|\geq 3 \text{ and } 1\notin A,\\
        r-1 & \text{if } |A|\leq 2 \text{ and } 1\in A,\\
        r & \text{if } |A|\geq 3 \text{ and } 1\in A.
    \end{cases}
    \]
    
    Let $\ba\in \Nrq$. If there exist $2 \le i < j \le q$ such that $a_i,a_j >0$, take $A=\{i,j\}$. Then
    \[
    b_{A}=1 < 2\leq a_i+a_j=|\ba|_A.
    \]
    If there exists a unique $i\in [q]$ such that $a_i> 0$, take $A=\{i\}$. If $i\neq 1$, then
    \[
    b_{A}=1 < r= a_i=|\ba|_A.
    \]
    On the other hand, if $i=1$, then 
    \[
    b_{A}=r-1 < r= a_i=|\ba|_A.
    \]
    We can now assume that there exists an integer $i\geq 2$ such that $a_k>0$ if and only if $k\in \{1,i\}$. Take $A=\{1,i\}$. Then
    \[
    b_{A}=r-1 < r=|\ba|_A.
    \]
Thus $g(r,q) \in \overline{{\E_q}^r}\smallsetminus {\E_q}^r$.
\end{proof}

In \cref{e:M2-comps} below we refer to computations of the integral closure of ${\E_q}^r$ that are done using Macaulay2~\cite{M2}. Observe that  the integral closure of a monomial ideal does not depend on the base field, and thus one can run these computations over any field. Such computations are fast when $q$ and $r$ are small, but they become impractical as these parameters increase.

\begin{example}
\label[example]{e:M2-comps}
Using the definition of 
$g(r,q)$  in \cref{e:newfrq},
\begin{equation}\label{eq:f24}
g(2,4)= y_{_1}y_{_2}y_{_3}y_{_4}y_{_{12}} y_{_{13}} y_{_{14}}y_{_{23}} y_{_{24}}y_{_{34}} y_{_{123}}^2 y_{_{124}}^2y_{_{134}}^2y_{_{234}}^2y_{_{1234}}^2.
\end{equation}
Using Macaulay2~\cite{M2} computations, we observe: 
\begin{enumerate}
\item  $\overline{{\E_4}^2}={\E_4}^2 + (g(2,4))$;
\item if $q\in \{4,5\}$, then $\overline{{\E_q}^{q-1}}= \overline{{\E_q}^{q-2}}\E_q$. 
\end{enumerate}
\end{example}

Part (1) of this example in particular shows that $g(2,4)$ is the only monomial in $\mingens(\overline{{\E_4}^2})\setminus \mingens({\E_4}^2)$, and thus \cref{p:not normal} is optimal when $q=4$ and $r=2$. On the other hand, experiments show that even with $q=5$, $\mingens(\overline{{\E_5}^2})\setminus \mingens({\E_5}^2)$ has more than one~monomial.

\cref{t:general-I-ic}  below translates our results about  $\E_q$ into statements that hold for any $I$ generated by $q$ square-free monomials. The map $\psi_I$ is instrumental in this translation. Our statements give concrete criteria for computing $\overline{I^r}$, checking whether $I^r$ is integrally closed and whether $I$ is~normal.

\begin{theorem}\label{t:general-I-ic}
Let $I$ be an  ideal in the polynomial ring $R$ with $q \ge 1$ minimal square-free monomial generators $m_1, \dots, m_q$, and let  $g(r,q)$ be as defined in \cref{e:newfrq}.
The following hold:
\begin{enumerate}
\item\label{i:p-geq-4} if $q \ge 4$, $r\ge 2$ and $\psi_I(g(r,q))\notin I^r$, then $\overline{I^r}\ne I^r$;
\item\label{i:q=3}  if $q\le3$, then $I$ is a normal ideal;
\item\label{i:g(2,4)} if $q=4$, then $\overline{I^2}= I^2+(\psi_I(g(2,4)))$;
\item\label{i:q-4-5}  if $q \in \{4,5\}$, then $I$ is normal if and only if $\overline{I^{r}}=I^r$ for all $2\le r\le q-2$;
\item\label{i:q-4-normal-equiv} if $q=4$, then the following are equivalent:
\begin{enumerate}
\item $I$ is a normal ideal;
\item $\psi_I(g(2,4)) \in I^2$;
\item $\psi_I(y_{ij})=1$ for some $1\leq i<j\leq 4$;
\item $\gcd(m_a,m_b)\mid \lcm(m_c,m_d)$ for some $\{a,b,c,d\}=\{1,2,3,4\}$.  
\end{enumerate}
\end{enumerate}
\end{theorem}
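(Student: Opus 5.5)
Parts (1)--(4) follow by transporting facts about $\E_q$ through $\psi_I$, using \cref{thm:integral-closure-extremal} and \cref{c:integralclosurepsi}. Part (5) needs a small combinatorial translation between $\psi_I(g(2,4))\in I^2$ and a divisibility condition on the $m_i$.

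For (1), \cref{p:not normal} gives $g(r,q)\in\overline{\Erq}$, so \cref{thm:integral-closure-extremal} gives $\psi_I(g(r,q))\in\overline{I^r}$. If this element is not in $I^r$, then $\overline{I^r}\ne I^r$. For (2), note that for $q\le 2$ normality is \cref{Singla}. For $q=3$, \cref{t:E32} gives $\overline{{\E_3}^r}={\E_3}^r$, so \cref{thm:integral-closure-extremal} together with \cref{l:Lr-extremal}(3) gives $\overline{I^r}=\psi_I({\E_3}^r)R=I^r$. For (3), apply \cref{c:integralclosurepsi} with the Macaulay2 computation \cref{e:M2-comps}(1), which says $\overline{{\E_4}^2}={\E_4}^2+(g(2,4))$; then $\overline{I^2}=I^2+(\psi_I(g(2,4)))$.

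For (4), only the backward direction needs work. By \cref{e:M2-comps}(2), $\overline{{\E_q}^{q-1}}=\overline{{\E_q}^{q-2}}\E_q$ for $q\in\{4,5\}$. Applying $\psi_I$ gives $\overline{I^{q-1}}=\overline{I^{q-2}}\,I$, since $\psi_I$ is multiplicative on generators. So if $\overline{I^r}=I^r$ for $2\le r\le q-2$, then $\overline{I^{q-1}}=I^{q-1}$ as well. \cref{l:analytic spread} gives $\ell(I)\le q$; more precisely, $\ell(I)$ is at most the rank bound $q$, since the Rees algebra of $I$ is a quotient-image situation. I would sidestep this by applying \cref{Singla} to $\E_q$ directly: $\overline{{\E_q}^r}={\E_q}^r$ fails, so instead I apply Singla to $I$. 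This needs $\ell(I)\le q$, which I would justify since $I$ is generated by $q$ elements. Then $I,\dots,I^{q-1}$ integrally closed implies normality. Here $r=1$ is automatic because $I$ is radical.

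For (5), (a)$\Leftrightarrow$(b) follows from (4) and (3) with $q=4$, since only $r=2$ matters. For (c)$\Leftrightarrow$(d), \cref{l:gamma} shows that $\psi_I(y_{ab})\ne 1$ means some variable divides exactly $m_a,m_b$. Any variable of $\gcd(m_a,m_b)$ divides $\lcm(m_c,m_d)$ unless it lies in $\unsure_I(\{a,b\})$, so $\gcd(m_a,m_b)\mid\lcm(m_c,m_d)$ iff $\psi_I(y_{ab})=1$. For (c)$\Rightarrow$(b), say $\psi_I(y_{12})=1$. Compare exponents: $\psi_I(g(2,4))$ should be divisible by $\psi_I(\e_3\e_4)$. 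Indeed, $\e_3\e_4$ has exponent $1$ on $y_A$ with $|A\cap\{3,4\}|=1$ and exponent $2$ on $y_A\supseteq\{3,4\}$. Here $g(2,4)$ has exponent $\ge1$ on all $y_A$ and exponent $2$ when $|A|\ge3$; the only deficiency is $A=\{3,4\}$. By symmetry I then choose the complementary pair, i.e.\ use $\e_1\e_2$, whose only deficiency is at $y_{12}$, which maps to $1$. For (b)$\Rightarrow$(c), which I expect to be the main obstacle, suppose all six $\psi_I(y_{ij})\ne1$ and pick variables $x_{ij}\in\unsure_I(\{i,j\})$. Setting all other variables to $1$ (a localization-type specialization that preserves non-membership of monomials), I would check directly that $\prod x_{ij}$ is not divisible by any $m_am_b$, because each $m_am_b$ needs some $x_{ij}$ squared.
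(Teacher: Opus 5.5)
Your proposal is correct and follows the paper's proof essentially step by step. Parts (1)--(4) transport the facts about $\E_q$, including the two Macaulay2 computations, through $\psi_I$ and apply Singla's criterion to $I$ with $\ell(I)\le q$. For (5)(b)$\Rightarrow$(c) you use the same key observation as the paper: every $m_am_b$ is divisible by some $\psi_I(y_{ij})^2$, while $y_{ij}$ occurs only to the first power in $g(2,4)$ and the $\psi_I(y_A)$ have disjoint supports. Your specialization to the variables $x_{ij}$ is a concrete repackaging of that argument.

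Two wording fixes are needed. First, the bound $\ell(I)\le q$ comes from $\ell(I)\le\mu(I)$; the lemma computing $\ell(\E_q)=q$ does not give it. Second, setting variables equal to $1$ preserves membership, not non-membership. The correct logic is that non-membership of the image implies $\psi_I(g(2,4))\notin I^2$.
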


\begin{proof}
Since $g(r,q)\in \overline{\Erq}$ by \cref{p:not normal}, then $\psi_I(g(r,q))\in \overline{I^r}$ by \cref{thm:integral-closure-extremal}. Thus Statement~\eqref{i:p-geq-4} holds. 

By \cref{Singla}, to show Statement~\eqref{i:q=3} it suffices to assume $q=3$. \cref{t:E32} gives that $\E_3$ is normal, that is, $\overline{{\E_3}^r}={\E_3}^r$ for all $r\ge 2$. Then \cref{thm:integral-closure-extremal} and \cref{l:Lr-extremal}(3) imply  $\overline{I^r}=I^r$ for all $r\ge 2$, establishing Statement \eqref{i:q=3}. 

Assuming $q=4$, \cref{e:M2-comps}(1) shows that $\overline{{\E_4}^2}={\E_4}^2+ (g(2,4))$. Statement~\eqref{i:g(2,4)} follows by applying the ring homomorphism $\psi_I$ to this equality and using \cref{thm:integral-closure-extremal} and \cref{l:Lr-extremal}(3). 
  
  To prove Statement \eqref{i:q-4-5}, note that if $I$ is normal then $\overline{I^r}=I^r$ for all $r$, and so in particular the equality holds for the given range of $r$. For the other implication, assume  $q\in \{4,5\}$ and $\overline{I^{r}}=I^r$ for all $2\le r\le q-2$. As noted in \cref{Singla}, a square-free monomial ideal $I$ with $q$ generators is normal if and only if $\overline{I^r}=I^r$ for all $r\leq \ell(I)-1$. Since $\ell(I)\le q$ and $I$ is integrally closed, we thus have: 
\begin{equation}
\label{eq:r for normal}
\text{$I$ is normal}\iff \overline{I^{r}}=I^r \qforall 2\le r\le q-1.
\end{equation}
To show that $I$ is normal, it suffices thus to show that $\overline{I^{q-1}}=I^{q-1}$. We show this using a series of equalities below, where the equalities follow in order from: \cref{thm:integral-closure-extremal};  \cref{e:M2-comps}(2) since $q \in \{4,5\}$; the fact that $\psi_I$ is a ring map; \cref{l:Lr-extremal}(3) and
     \cref{thm:integral-closure-extremal}; the assumption when $r=q-2$; and finally from basic definitions 
\[
\overline{I^{q-1}}=\psi_I(\overline{{\E_q}^{q-1}})R=\psi_I(\overline{{\E_q}^{q-2}}\E_q)R=\!\left(\psi_I(\overline{{\E_q}^{q-2}})R\right)\!\left(\psi_I(\E_q)R\right)\!=\overline{I^{q-2}}I=(I^{q-2})I=I^{q-1}.
\]


To show Statement \eqref{i:q-4-normal-equiv}, assume $q=4$. We first observe (c)$\iff$(d) is a direct consequence of the definition of the map $\psi_I$. The equivalence of (a) and (b) is a consequence of Statements \eqref{i:g(2,4)} and \eqref{i:q-4-5}.  Thus, it suffices to show (b)$\iff$(c). 

Assume (b), i.e.~$\psi_I(g(2,4))\in I^2$, and hence $\psi_I(g(2,4))$ is divisible by a minimal monomial generator of $I^2$. Assume $m_im_k\mid \psi_I(g(2,4))$, where $i,k\in [4]$. Choose $j\in [4]\ssm \{i\}$ such that  if $k\ne i$ then $j=k$. Observe that $y_{ij}^2\mid \e_i\e_k$ and hence $\psi_I(y_{ij})^2\mid \psi_I(\e_i\e_j)$. Since $m_im_j=\psi_I(\e_i\e_j)$, we conclude that $\psi(y_{ij}^2)\mid \psi_I(g(2,4))$ and hence $\psi_I(y_{ij})\mid \psi_I(h)$ where 
\[
h= \frac{g(2,4)}{y_{ij}}.
\]
Assume $\psi_I(y_{ij})\ne 1$ and let $x$ be a variable in the support of $\psi_I(y_{ij})$. Then we must have $x\mid \psi_I(y_A)$ for some $y_A$ is the support of $h$. Observing that $h$ is not divisible by $y_{ij}$, we must have $A\ne \{i,j\}$. However $\psi_I(y_{ij})$ and $\psi_I(y_A)$ have disjoint supports, see \cref{l:Lr-extremal}(4). We must have thus $\psi_I(y_{ij})=1$, so (c) holds.

Conversely, assume $\psi_I(y_{ij})=1$ and $i\ne j$.  Using the formula for $g(2,4)$ in \cref{eq:f24}, observe that 
$\psi_I(\e_i\e_j)\mid \psi_I(g(2,4))$. Therefore  $\psi_I(g(2,4))\in\psi_I({\E_q}^2)R=I^2$. 
\end{proof}

In view of \cref{t:general-I-ic}\eqref{i:q-4-5} above, we ask the following: 
\begin{question}\label[question]{q:integralclosure}
With $I$ as in \cref{t:general-I-ic}, is it true for all $q\ge 4$ 
that \cref{eq:r for normal} can be improved to
\[
\text{$I$ is normal}\iff \overline{I^{r}}=I^r \qforall 2\le r\le q-2\ ?
\]
\end{question}
If $I$ is a square-free monomial ideal minimally generated by $q$ elements with analytic spread $\ell < q$, a positive answer to~\cref{q:integralclosure} follows directly from Singla's result~\cite{Sin08}. The interesting case is when the analytic spread of $I$ is exactly $q$, in which case Singla's result states that one need only check the equality up to power $q-1$ of $I$, but a positive answer to~\cref{q:integralclosure} would show it suffices to check only up to power $q - 2$. Note that~\cref{t:general-I-ic}\eqref{i:q-4-5} gives a positive answer to the question when $q = 4,5$.

\section{\bf Symbolic Powers and $\psi_I$ }\label[section]{sec:symbolic}

In this section we show that to understand primary decompositions of square-free monomial ideals, one only has to study primary decompositions of the extremal ideals $\E_q$. We first study the behavior of primary decompositions under $\psi_I$  (\cref{t:primary-to-primary}), and then provide applications of the result to the study of symbolic powers of square-free monomial ideals (\cref{thm:symbolic-extremal}), symbolic~defects (\cref{c:sdefectbound}), and containment problems including resurgence and asymptotic resurgence (\cref{c:rho}).

As above, $R=\sfk[x_1,\ldots,x_n]$ is a polynomial ring over a field $\sfk$. We recall some basic facts and definitions regarding primary decompositions, and refer the reader to \cite{villarrealbook} for additional details.
An ideal $Q$ of  $R$ is {\bf primary} if $ab \in Q$ and $a \not\in Q$ implies that  $b^r \in Q$ for some power $r$. The ideal  $Q$ is {\bf irreducible} if there are no ideals $I$ and $J$ such that $Q = I \cap J$, unless $I$ or $J$ equals $Q$.
When $Q$ is a monomial ideal, and therefore has a unique minimal monomial generating set, $Q$ is primary if and only if every variable in the support of any minimal generator appears as a pure power in the generating set. In addition, (see, e.g.,~\cite[Proposition~5.1.16]{villarrealbook}) 
 $Q$ is irreducible if and only if 
$$
Q = (x_{i_1}^{a_{i_1}},\ldots,x_{i_s}^{a_{i_s}})
$$
for some integer $s >0$. In particular, an irreducible monomial ideal is primary. Every ideal $I$ of $R$ can be written as a (not necessarily unique)  intersection of primary ideals $Q_i$ 
\begin{equation}\label{eq:pd}
    I = Q_1 \cap \cdots \cap Q_t
\end{equation}
which is called a {\bf primary decomposition} of $I$.
The primary decomposition is {\bf irredundant} if for each $j \in [t]$,  $$\bigcap_{i \in [t] \setminus \{j\}} Q_i \not \subseteq Q_j.$$ When  \cref{eq:pd} is an irreducible primary decomposition of a monomial ideal, then it is straightforward to check that the decomposition is irredundant if and only if 
$$Q_i \not\subseteq Q_j \qfor i \ne j,$$
see also \cite[Definition 3.3.4]{SSW}.

While a primary decomposition of an ideal need not be unique, every monomial ideal has a {\bf unique  irredundant irreducible} primary decomposition, that is, an irredundant decomposition  as in~\cref{eq:pd}
where each $Q_i$ is an irreducible monomial ideal (\cite[Theorem~5.1.17]{villarrealbook}).

If  \cref{eq:pd} is a primary decomposition of $I$, then for each $i\in [t]$ the prime ideal $\sqrt{Q_i}$ is called an {\bf associated prime} of $I$. The set of associated primes of $I$ is denoted by $\ass(R/I)$ or  $\ass(I)$. It is well-known that $\ass(I)$ does not depend on a chosen primary decomposition of $I$ (see, e.g., \cite[page 10]{HH11}). The {\bf minimal} primes of $I$ are the associated primes of $I$ which are minimal with respect to inclusion. We denote this set by $\minimalprimes(I)$.

We now examine how $\psi_I$ behaves on irreducible primary monomial ideals. Below and throughout the paper, for an ideal $J$ of $S_{[q]}$, we often use the notation $\psi_I(J)$  to refer to the {\it ideal generated by} $\psi_I(J)$ in the polynomial ring $R$, or more precisely, $\psi_I(J)R$. In the case where $\psi_I$ is surjective, we automatically have that $\psi_I(J)$ and $\psi_I(J)R$ are equal as sets.

\begin{lemma}\label[lemma]{prop:primary-decomposition-powers-variables}
    Let $I$ be an ideal in the polynomial ring  $R$ minimally generated by $q\ge 1$ square-free monomials.  Let 
    \[
    Q=(y_{A_1}^{r_1},\dots, y_{A_h}^{r_h})
    \]
be an irreducible primary monomial ideal in $S_{[q]}$ for some distinct nonempty sets $A_1,\dots, A_h \subseteq [q]$ and $r_i>0$. If  $\unsure_I(A_i)= \emptyset$ for some $i\in [h]$, then $\psi_I(Q)=R$. Otherwise,
\begin{equation}
 \label{e: psi-Q}
\psi_I(Q)=  \bigcap_{\substack{(k_1, \ldots, k_h)\\x_{k_{i}}\in\unsure_I(A_i), \, i \in [h]}} ( x_{k_1} ^{r_1},\dots, x_{k_h} ^{r_h}  )
\end{equation}  
is an irredundant irreducible primary decomposition of $\psi_I(Q)$.
\end{lemma}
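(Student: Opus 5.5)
The plan is to compute $\psi_I(Q)R$ directly from its generators and then split it into irreducible pieces using the elementary identity $(u v)^r\,R + J = \dots$. First, dispose of the degenerate case: if $\unsure_I(A_i)=\emptyset$ for some $i$, then $\psi_I(y_{A_i})=1$ by \cref{d:psi}. Hence $\psi_I(y_{A_i}^{r_i})=1$ lies in $\psi_I(Q)R$, so $\psi_I(Q)R=R$.

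Otherwise each $\psi_I(y_{A_i})$ is the square-free monomial $\prod_{x_k\in\unsure_I(A_i)}x_k$. Therefore
\[
\psi_I(Q)R=\Big(\prod_{x_k\in\unsure_I(A_1)}x_k^{r_1},\ \dots,\ \prod_{x_k\in\unsure_I(A_h)}x_k^{r_h}\Big).
\]
By \cref{l:Lr-extremal}(4), the supports $\unsure_I(A_1),\dots,\unsure_I(A_h)$ are pairwise disjoint, since the $A_i$ are distinct. The key tool is the standard distributivity fact for monomial ideals: if $u,v$ are monomials with disjoint supports from each other and from the generators of a monomial ideal $J$, then $(uv)+J=((u)+J)\cap((v)+J)$. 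This can be checked by comparing lcm's of generators: the lcm of $u$ and $v$ is $uv$, the lcm of $u$ and $g\in J$ is divisible by $g$, and so on. I would apply it with $u=x_k^{r_i}$, whose support is a single variable, peeling off one variable of $\psi_I(y_{A_i})^{r_i}=\prod_{x_k}x_k^{r_i}$ at a time. Inducting over the variables, and then over $i=1,\dots,h$, yields
\[
\psi_I(Q)R=\bigcap_{(k_1,\dots,k_h)}(x_{k_1}^{r_1},\dots,x_{k_h}^{r_h}),
\]
with the intersection over all choices $x_{k_i}\in\unsure_I(A_i)$. Disjointness of the supports guarantees that the hypotheses of the distributivity fact hold at every stage, and it also guarantees that the $x_{k_i}$ in each component are distinct variables.

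Each component is generated by pure powers of distinct variables, so it is irreducible and in particular primary. For irredundancy it suffices, by the criterion recalled before the lemma, to show that no component is contained in another. Take two different tuples $(k_i)$ and $(k'_i)$, say with $k_j\ne k'_j$. The generator $x_{k_j}^{r_j}$ of the first component cannot lie in the second. It is a pure power of $x_{k_j}$, and the second component's generators are pure powers of $x_{k'_1},\dots,x_{k'_h}$. Here $x_{k_j}\neq x_{k'_j}$, and $x_{k_j}\ne x_{k'_i}$ for $i\ne j$, because $x_{k'_i}\in\unsure_I(A_i)$, which is disjoint from $\unsure_I(A_j)$.

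The main obstacle is stating and verifying the distributivity step cleanly in the presence of the other generators $J$. That is where the disjoint-support property from \cref{l:Lr-extremal}(4) is used essentially. I would verify it via the monomial intersection rule: an intersection of monomial ideals is generated by the pairwise lcm's of their generators.
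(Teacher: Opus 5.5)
Your proposal is correct and follows essentially the same route as the paper: the same degenerate case, the same rewriting of the generators as products of pure powers over the pairwise disjoint sets $\unsure_I(A_i)$, and an irredundancy argument resting on that disjointness (the paper shows that a containment of components forces equality, and you show non-containment for distinct tuples, which is the contrapositive). Your distributivity identity $(uv)+J=((u)+J)\cap((v)+J)$ justifies a step the paper leaves implicit; it actually needs only $\gcd(u,v)=1$, not disjointness from the generators of $J$.
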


\begin{proof}
  If $\unsure_I(A_i)$ is empty for some $i\in [h]$, then $\psi_I(y_{A_i})=1$ and hence $\psi_I(Q)=(1)=R$, and we are done. So assume that $\unsure_I(A_i)$ is non-empty for all $i\in [h]$. 
Observe that $\psi_I(Q)$ is equal to
 $$  \left(\psi_I(y_{A_1})^{r_1},\dots, \psi_I(y_{A_h})^{r_h}\right)
        =  \Big(\!\!\! \prod_{\tiny x_{k}\in \unsure_I(A_1) } x_{k} ^{r_1},\dots, \!\!\prod_{\tiny x_{k} \in \unsure_I(A_h)} x_{k} ^{r_h} \! \Big )
        = \!\!\!\!\!\! \bigcap_{\substack{(k_1, \ldots, k_h)\\x_{k_{i}}\in\unsure_I(A_i), \, i\in [h]}}\!\!\!\!\!\! ( x_{k_{1}} ^{r_1},\dots, x_{k_{h}} ^{r_h}  ).
        $$

   The last expression is an irreducible decomposition of $\psi_I(Q)$. To see that the decomposition is irredundant, assume $Q_1=(x_{k_1}^{r_1},\dots, x_{k_h}^{r_h})$ and $Q_2=(x_{s_1}^{r_1},\dots, x_{s_h}^{r_h})$ are components of this decomposition with $Q_1\subseteq Q_2$. It suffices to prove $Q_1=Q_2$. Since $\unsure_I(A_i)$ are pairwise disjoint, these are minimal monomial generating sets of $Q_1$ and $Q_2$. Consider $j\in [h]$. Since $x_{k_j}^{r_j}\in Q_1$,  it is a multiple of $x_{s_t}^{r_t}$ for some $t \in [h]$ and in particular $x_{k_j}=x_{s_t}$. Since $\unsure_I(A_i)$ are pairwise disjoint, $t=j$ and $ s_t=k_j$. Therefore,
   \[
   Q_1=(x_{k_1}^{r_1},\dots, x_{k_h}^{r_h})= (x_{s_1}^{r_1},\dots, x_{s_h}^{r_h})=Q_2,
   \]
   as desired.
\end{proof}

We apply \cref{prop:primary-decomposition-powers-variables} to show how the irredundant irreducible primary decomposition of $I^r$ can be obtained from $\Erq$. 

\begin{theorem}[{\bf Primary decomposition of $I^r$ via $\psi_I$}]\label{t:primary-to-primary}
   Let $I$ be an ideal in the polynomial ring  $R$ minimally generated by $q\ge 1$ square-free monomials and let $r\ge 1$. If 
    $\Erq$ has an irredundant irreducible  primary decomposition
    \[
    \Erq = Q_1\cap \cdots \cap Q_{t} \qwith  
    Q_i = (y_{A_{1,i}}^{r_{1,i}},\dots, y_{A_{s_i,i}}^{r_{s_i,i}}) \qwhere 
    \emptyset\ne A_{j,i} \subseteq [q]
    \]
   then the irredundant irreducible  primary decomposition of $I^r$ is 
    \[
 I^r= \bigcap_{i\in L} \bigcap_{\substack{(k_{1,i}, \ldots, k_{s_i ,i})\\x_{k_{j,i}}\in\unsure_I(A_{j,i}), \,  j \in [s_i]}}\left( x_{k_{1,i}} ^{r_{1,i}},\dots, x_{k_{s_i,i}} ^{r_{s_i,i}}  \right)
 \qwhere
 L=\{i \in [t] \st \psi_I(Q_i) \neq R\}.
    \]
\end{theorem}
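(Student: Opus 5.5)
The proof has two parts. First I show the displayed intersection equals $I^r$, which is formal. Then I show that this decomposition is irredundant, which is where the real work lies.

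\emph{Equality.} Apply $\psi_I$ to $\Erq = Q_1\cap\cdots\cap Q_t$. By \cref{l:Lr-extremal}(3), the left side becomes $I^r$. Repeated use of \cref{l:Lr-extremal}(6) turns the right side into $\psi_I(Q_1)R\cap\cdots\cap\psi_I(Q_t)R$. The components with $i\notin L$ equal $R$ and can be dropped. For $i\in L$, every $\unsure_I(A_{j,i})$ is nonempty, so \cref{prop:primary-decomposition-powers-variables} rewrites $\psi_I(Q_i)$ as the inner intersection of irreducible monomial ideals $(x_{k_{1,i}}^{r_{1,i}},\dots,x_{k_{s_i,i}}^{r_{s_i,i}})$. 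This gives the displayed irreducible decomposition of $I^r$. By uniqueness of the irredundant irreducible decomposition, it then suffices to check irredundancy. For irreducible monomial components, this means: no component is contained in a different one.

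\emph{Irredundancy.} Within a fixed $i$, \cref{prop:primary-decomposition-powers-variables} already gives this. So take components $P=(x_{k_1}^{r_{1,i}},\dots,x_{k_{s}}^{r_{s,i}})$ coming from $Q_i$ and $P'=(x_{k'_1}^{r_{1,i'}},\dots)$ coming from $Q_{i'}$, with $i\ne i'$, and suppose $P\subseteq P'$. Each generator $x_{k_j}^{r_{j,i}}$ of $P$ must then be a multiple of some generator $x_{k'_l}^{r_{l,i'}}$ of $P'$. That forces $x_{k_j}=x_{k'_l}$ and $r_{j,i}\ge r_{l,i'}$.

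The variable $x_{k_j}$ lies in $\unsure_I(A_{j,i})$ and in $\unsure_I(A_{l,i'})$. Since the sets $\unsure_I(A)$ are pairwise disjoint, $A_{j,i}=A_{l,i'}$. Consequently $y_{A_{j,i}}^{r_{j,i}}$ is a multiple of $y_{A_{l,i'}}^{r_{l,i'}}\in Q_{i'}$. As this holds for every $j$, we get $Q_i\subseteq Q_{i'}$. This contradicts the irredundancy of the given decomposition of $\Erq$, since $Q_i\ne Q_{i'}$ for $i\neq i'$ (distinct components of an irredundant decomposition).

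The main obstacle is exactly this cross-component irredundancy step. The key point is that containment of $P$ in $P'$ is detected generator by generator, and the disjointness of the sets $\unsure_I(A)$ from \cref{l:Lr-extremal}(4) lets each such divisibility be lifted back to $S_{[q]}$.
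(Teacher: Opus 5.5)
Your proof is correct and follows essentially the same route as the paper: push the decomposition of $\Erq$ through $\psi_I$ using \cref{l:Lr-extremal}(3),(6) and \cref{prop:primary-decomposition-powers-variables}, then lift any containment between components coming from different $Q_u, Q_v$ to $Q_u\subseteq Q_v$ via the pairwise disjointness of the sets $\unsure_I(A)$. Your version is slightly cleaner on two details the paper's write-up glosses over: your exponent inequality $r_{j,i}\ge r_{l,i'}$ goes in the right direction, and working with $\subseteq$ rather than $\subsetneq$ also rules out coincident components arising from different $Q_i$.
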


\begin{proof}
 Using \cref{l:Lr-extremal} and \cref{e: psi-Q}, we have
    \[
    I^r=\psi_I(\Erq) = \bigcap_{i\in L} \psi_I(Q_i)=\bigcap_{i\in L} \bigcap_{\substack{(k_{1,i}, \ldots, k_{s_i ,i})\\x_{k_{j,i}}\in\unsure_I(A_{j,i}) \, \forall j \in [s_i]}}\left( x_{k_{1,i}} ^{r_{1,i}},\dots, x_{k_{s_i,i}} ^{r_{s_i,i}}  \right)
    \]
   and the expression on the right is an irreducible primary decomposition. We now show there are no containments among the ideals in this decomposition.     
    Suppose 
    \begin{align*}
        ( x_{k_{1,u}} ^{r_{1,u}},\dots, x_{k_{s_u,u}} ^{r_{s_u,u}}  ) \subsetneq ( x_{k_{1,v}} ^{r_{1,v}},\dots, x_{k_{s_v,v}} ^{r_{s_v,v}}  )
    \end{align*}
    for some  $u,v\in L$. By relabeling, we can assume that $s_u\leq s_v$, $k_{j,u}=k_{j,v}$, and $r_{j,u}\leq r_{j,v}$ for any $j\in [s_u]$. By \cref{prop:primary-decomposition-powers-variables}, we must have $u\neq v$. Since $k_{j,u}=k_{j,v}$ for $j \in [u]$ and the sets $\unsure_I(A_{j,i})$ are pairwise disjoint, it follows  that $A_{j,u}=A_{j,v}$ for $j\in [u]$. Thus
    $Q_u\subseteq Q_v$, a contradiction since the decomposition of $\Erq$ was assumed to be irredundant.
\end{proof}

For additional information regarding how $\psi_I$ interacts with primary decompositions, associated primes, persistence properties, and related algebraic questions, as well as computations of associated primes of $\Erq$, see \cite{AlgII}. Knowing that $\psi_I$ sends a primary decomposition of $\Erq$ to one of $I^r$ strongly suggests that $\psi_I$ will be useful in the study of symbolic powers. 

\begin{definition}[{\protect\cite[Definition 4.3.22 and Proposition 4.3.25]{villarrealbook}}]\label[definition]{d:symbolic-powers}
    Let $I$ be an ideal in a polynomial ring $R=\sfk[x_1,\dots, x_n]$ where $\sfk$ is a field, and let $r$ be a positive integer. Assume that 
    \[
    I^r=\fq_1\cap \cdots \cap \fq_t
    \]
    is a primary decomposition of $I^r$. Order the primary components so that $\sqrt{\fq_i}$ is a minimal prime of $I$ if and only if $i\in [t']$ for some $t' \leq t$. Then the \emph{$r$-th symbolic power} of $I$, denoted by $I^{(r)}$, is defined to be
    \[
    I^{(r)}= \fq_1\cap \cdots \cap \fq_{t'}.
    \]
When $I$ is a square-free monomial ideal and $\fp_1,\ldots,\fp_t$ are the minimal (or equivalently, associated) primes of $I$, then we also have 
$$
    I^{(r)}= \fp_1^r\cap \cdots \cap \fp_t^r.   
    $$ 
\end{definition}

Using this definition, we are able to show that the map $\psi_I$ behaves nicely with regard to symbolic~powers.

\begin{theorem}[{\bf $\psi_I$ preserves symbolic powers}]\label{thm:symbolic-extremal}
        Let $I$ be an ideal in the
  polynomial ring $R=\sfk[x_1,\ldots,x_n]$ minimally generated by $q\ge 1$ square-free monomials. Then for all $r\ge 1$, 
    $$\psi_I({\E_q}^{(r)}) = I^{(r)}.$$ 
\end{theorem}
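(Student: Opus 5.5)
Our plan is to use the description $I^{(r)}=\fp_1^r\cap\cdots\cap\fp_t^r$ over the minimal primes of $I$, together with \cref{l:Lr-extremal}(6), which says that $\psi_I$ commutes with finite intersections of monomial ideals. We handle $\E_q$ first. It is square-free, so its minimal primes are the monomial primes $P_C=(y_A \st A\cap C\neq\emptyset)$, one for each minimal vertex cover, that is, for each minimal nonempty $C\subseteq[q]$ such that every $\e_i$ is divisible by some $y_A$ with $A\cap C\neq \emptyset$. Since $y_{\{i\}}$ divides only $\e_i$, a set of variables covers all $\e_i$ if and only if $C$ meets each $\{i\}$. Concretely, we will first identify the minimal primes of $\E_q$ as the primes generated by minimal variable covers, and then write ${\E_q}^{(r)}=\bigcap_P P^r$.

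Next we apply $\psi_I$. By \cref{l:Lr-extremal}(6),
\[
\psi_I({\E_q}^{(r)})R=\bigcap_{P\in\minimalprimes(\E_q)}\psi_I(P^r)R.
\]
For a monomial prime $P=(y_{A_1},\dots,y_{A_h})$, the ideal $\psi_I(P^r)R$ equals $(\psi_I(y_{A_1}),\dots,\psi_I(y_{A_h}))^r$. Because the $\psi_I(y_{A_j})$ are square-free with pairwise disjoint supports (\cref{l:Lr-extremal}(4)), this $r$-th power should decompose as $\bigcap(x_{k_1},\dots,x_{k_h})^r$ over all choices $x_{k_j}\in\unsure_I(A_j)$. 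If some $\unsure_I(A_j)$ is empty, the ideal is $R$ and drops out. We will prove this identity by an $r$-th power analogue of the argument in \cref{prop:primary-decomposition-powers-variables}, or alternatively by using that products of variables in disjoint sets behave like the variables themselves under the "distribute the intersection" identity, via \cref{l:Lr-extremal}(5)-type lcm arguments. This gives
\[
\psi_I({\E_q}^{(r)})R=\bigcap_{P}\ \bigcap_{(k_j)}(x_{k_1},\dots,x_{k_h})^r .
\]

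The main step, and the one we expect to be the obstacle, is to match the primes $(x_{k_1},\dots,x_{k_h})$ appearing above with the minimal primes of $I$. One direction is immediate: each such prime contains $I=\psi_I(\E_q)R$, since applying $\psi_I$ to the containment $\E_q\subseteq P$ gives $I\subseteq\psi_I(P)R\subseteq(x_{k_1},\dots,x_{k_h})$. Hence each contains a minimal prime $\fp$ of $I$, so $\fp^r\subseteq(x_{k_1},\dots,x_{k_h})^r$, and therefore $I^{(r)}\subseteq\psi_I({\E_q}^{(r)})R$. For the reverse containment we must show that every minimal prime $\fp$ of $I$ occurs among these primes. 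Take a minimal vertex cover $X$ of $I$. Each $x\in X$ lies in a unique $\unsure_I(A_x)$. By minimality, distinct elements of $X$ lie in distinct clusters, because two elements of the same cluster cover the same generators. Then $P=(y_{A_x}\st x\in X)$ contains $\E_q$, since $x\mid m_i$ means $i\in A_x$ and hence $y_{A_x}\mid\e_i$. It is also minimal: if a smaller $P'$ contained $\E_q$, applying $\psi_I$ would give a smaller cover of $I$ inside $X$. Choosing $k$ so that $x_{k_j}$ are exactly the elements of $X$ realizes $\fp$ as one of the components. Finally, any extra components are primes containing some minimal prime of $I$, so they are redundant in the intersection. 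The two intersections therefore agree, which gives $\psi_I({\E_q}^{(r)})R=I^{(r)}$.
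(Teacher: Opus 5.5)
Your proof is correct, but it is organized differently from the paper's.

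\textbf{The paper's route.} It starts from the irredundant irreducible primary decomposition $\Erq=Q_1\cap\cdots\cap Q_t$ and keeps the $Q_i$ whose radicals are minimal. It pushes these through $\psi_I$ using \cref{l:Lr-extremal}(6). It then relies on \cref{t:primary-to-primary} to recognize the result as the minimal-radical part of a primary decomposition of $I^r$. That theorem in turn rests on \cref{prop:primary-decomposition-powers-variables} and an irredundancy argument. The correspondence between minimal primes of $I$ and of $\E_q$ is only asserted there, in one line (``apply the theorem with $r=1$'').

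\textbf{Your route.} You use $J^{(r)}=\bigcap_{P\in\minimalprimes(J)}P^r$ on both sides. All you then need about $\psi_I$ is the image of a prime power, namely $(u_1,\dots,u_h)^r=\bigcap(x_{k_1},\dots,x_{k_h})^r$ for square-free $u_j$ with pairwise disjoint nonempty supports $U_j$. You only sketched this identity, but it is immediate: $\bx^{\bc}$ lies in either side exactly when $\sum_j\min_{k\in U_j}c_k\ge r$. You replace irredundancy by absorption: every component is $\fq^r$ for a prime $\fq\supseteq I$, hence contains some $\fp^r$ with $\fp\in\minimalprimes(I)$. Your construction $P=(y_{A_x}\colon x\in X)$ proves that every minimal prime of $I$ actually occurs, which is exactly the matching the paper leaves implicit.

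\textbf{Comparison.} The paper's route fits its primary-decomposition framework and implicitly produces the irredundant irreducible decomposition of $I^{(r)}$ from that of ${\E_q}^{(r)}$. Yours is more self-contained and does not need \cref{t:primary-to-primary} at all.

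\textbf{One correction.} The explicit description of $\minimalprimes(\E_q)$ in your first paragraph is wrong. Every nonempty $C$ satisfies your condition, since $y_{[q]}$ divides every $\e_i$, so the minimal such $C$ are singletons. For $q\ge 2$ the resulting primes $(y_A\colon c\in A)$ properly contain the minimal prime $(y_{[q]})$. The actual minimal primes are $(y_A\colon A\in\C)$ for $\C\in\CC_q$. Nothing later in your argument uses this description; you only use that the minimal primes of a square-free monomial ideal are generated by minimal sets of variables meeting the support of every generator. So simply delete that sentence.
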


\begin{proof}
Let $\Erq=Q_1\cap \cdots \cap Q_t$ be an irredundant irreducible  primary decomposition. By definition, we have
    \[
    {\E_q}^{(r)}
    = 
    \bigcap_{\sqrt{Q_i}\in \minimalprimes(\Erq)} Q_i
   .
    \]
    Applying the map $\psi_I$ to both sides of this equation, we obtain
    \begin{equation}\label{eq:symb-1}
        \psi_I({\E_q}^{(r)}) =  \bigcap_{\sqrt{Q_i}\in \minimalprimes(\Erq)} \psi_I(Q_i) = \bigcap_{i\in L, \sqrt{Q_i}\in \minimalprimes(\Erq)} \psi_I(Q_i)
    \end{equation}
   where $L$ is the set of indices for which $\psi_I(Q_i) \ne R$. Recall that in \cref{t:primary-to-primary} we have
    \[
    I^r = \bigcap_{i\in L} \psi_I(Q_i).
    \]
    Applying \cref{t:primary-to-primary} with $r=1$ shows that minimal primes of $I^r$ arise from those of $\Erq$, so we have
    \begin{equation}\label{eq:symb-2}
        I^{(r)} = \bigcap_{i\in L, \sqrt{Q_i}\in\minimalprimes(\Erq)} \psi_I(Q_i).
    \end{equation}
    The result now follows from \cref{eq:symb-1,eq:symb-2}.
\end{proof}
    
An immediate consequence of~\cref{thm:symbolic-extremal} is that extremal ideals play a key role in the theory of symbolic powers of square-free monomial ideals. The following corollary shows how $\psi_I$ can be used to check the equality, or lack thereof, of symbolic powers and ordinary powers of a given ideal. Since the statement follows immediately from \cref{thm:symbolic-extremal} using basic facts from \cref{l:Lr-extremal}, we do not include a~proof. 

\begin{corollary}\label[corollary]{c:symbolic=ordinary}
Let $I$ be an ideal in the ring $R$ minimally generated by $q\ge 1$ square-free monomials, and let $r\ge 1$. Then $I^{(r)}=I^r$ if and only if $\psi_I(m)\in I^r$ for all  $m\in \mingens({\E_q}^{(r)}) \setminus \mingens(\Erq)$. In~particular, 
$$I^{(r)}=I^r+        
    \psi_I({\E_q}^{(r)}\smallsetminus  \Erq)R.$$
\end{corollary}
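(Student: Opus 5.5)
The plan is to mirror the proof of \cref{c:integralclosurepsi}, with \cref{thm:symbolic-extremal} playing the role of \cref{thm:integral-closure-extremal}. We take $\mingens({\E_q}^{(r)})$ as a generating set of ${\E_q}^{(r)}$. By \cref{thm:symbolic-extremal}, the monomials $\psi_I(m)$ for $m\in \mingens({\E_q}^{(r)})$ generate $I^{(r)}$ as an ideal of $R$.

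We split this generating set into two parts. The first part consists of those $m$ that lie in $\mingens(\Erq)$, namely the $\pmea$ with $\ba\in\Nrq$. By \cref{l:Lr-extremal}(2), their images are the monomials $\bma$, and these generate $I^r$. The second part consists of the remaining minimal generators $m\in \mingens({\E_q}^{(r)})\setminus\mingens(\Erq)$.

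One subtlety needs checking: a minimal generator of ${\E_q}^{(r)}$ could lie in $\Erq$ without being a minimal generator of $\Erq$. This cannot happen. Since $\Erq\subseteq{\E_q}^{(r)}$, any such $m$ is divisible by some $\pmea$, and $\pmea$ itself lies in ${\E_q}^{(r)}$. Minimality of $m$ in ${\E_q}^{(r)}$ then forces $m=\pmea$. Hence every element of the second part lies outside $\Erq$. Conversely, every element of ${\E_q}^{(r)}\smallsetminus\Erq$ is a multiple of a generator from the second part, because it cannot be a multiple of any $\pmea$.

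Combining the two parts gives
$$I^{(r)}=I^r+\big(\psi_I(m) \st m\in \mingens({\E_q}^{(r)})\setminus\mingens(\Erq)\big)R.$$
By the observation in the previous paragraph, the second summand equals $\psi_I({\E_q}^{(r)}\smallsetminus\Erq)R$. This proves the displayed formula.

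The equivalence follows directly from this formula. We have $I^{(r)}=I^r$ exactly when each added generator $\psi_I(m)$ already lies in $I^r$. No step is a real obstacle; the only point needing care is the divisibility argument identifying the minimal generators of ${\E_q}^{(r)}$ that lie in $\Erq$.
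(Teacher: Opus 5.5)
Your argument is correct and is the deduction the paper has in mind: it omits the proof, saying only that the corollary follows immediately from \cref{thm:symbolic-extremal} and \cref{l:Lr-extremal}. One point is loose: you assert without justification that every $\pmea$ is a minimal generator of ${\E_q}^{(r)}$. This is true, since the minimal covers $\{B,\compl{B}\}$ and $\{[q]\}$ show that $\pmea/y_B\notin{\E_q}^{(r)}$ for every variable $y_B$ dividing $\pmea$, but you do not need it: the images of your first part lie in $I^r\subseteq I^{(r)}$, and that already suffices.
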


When $I$ is generated by $2$ monomials, the situation is particularly nice.

\begin{corollary}\label[corollary]{c:E2-symbolic}
    Let $I$ be an ideal generated by $2$ square-free monomials and let $r$ be an integer. Then  $I^{(r)}=I^r$ for any $r\geq 1$.
\end{corollary}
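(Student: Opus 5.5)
By \cref{thm:symbolic-extremal} and \cref{l:Lr-extremal}(3), it suffices to prove ${\E_2}^{(r)}={\E_2}^r$ for all $r\ge 1$. Then $I^{(r)}=\psi_I({\E_2}^{(r)})R=\psi_I({\E_2}^r)R=I^r$. (If $I$ is principal, i.e.\ $q=1$, the claim is trivial: $I^{(r)}$ and $I^r$ are both generated by $m_1^r$. The same argument also works with $\E_1=(y_1)$.)

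So we work with $\E_2=(y_1y_{12},\,y_2y_{12})=y_{12}\,(y_1,y_2)$ in $\sfk[y_1,y_2,y_{12}]$. Its minimal primes are $(y_{12})$ and $(y_1,y_2)$, giving the primary decomposition $\E_2=(y_{12})\cap(y_1,y_2)$. By the formula at the end of \cref{d:symbolic-powers},
\[
{\E_2}^{(r)}=(y_{12})^r\cap(y_1,y_2)^r .
\]
The variable $y_{12}$ does not occur in $(y_1,y_2)^r$, so the lcm of a generator $y_{12}^r$ with a generator $y_1^iy_2^{r-i}$ of $(y_1,y_2)^r$ is simply their product. Hence the intersection equals
\[
y_{12}^r\,(y_1,y_2)^r=\big(y_{12}(y_1,y_2)\big)^r={\E_2}^r .
\]

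The only point that needs care is this product-equals-intersection step for monomial ideals in disjoint sets of variables, which is standard: for monomial ideals the intersection is generated by pairwise lcms, and with disjoint supports each lcm is the product. There is no real obstacle.
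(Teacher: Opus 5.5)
Your proposal is correct and follows the paper's own proof: reduce to $\E_2$ via \cref{thm:symbolic-extremal}, then use ${\E_2}^r=y_{12}^r(y_1,y_2)^r=(y_{12})^r\cap(y_1,y_2)^r={\E_2}^{(r)}$. Your extra remarks on the principal case and on why the product equals the intersection for disjoint variable sets only make explicit details the paper leaves implicit.
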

\begin{proof}
    It suffices to prove the result for $I=\E_2$, as the general statement then follows from \cref{thm:symbolic-extremal}. We have
    \begin{align*}
        {\E_2}^r=(y_{_1}y_{_{12}},y_{_2}y_{_{12}})^r = y_{_{12}}^r(y_{_1},y_{_2})^r= (y_{_{12}})^r \cap (y_{_1},y_{_2})^r= \E_2^{(r)},
    \end{align*}
    as desired.
\end{proof}

 A key goal in the theory of symbolic powers is to understand which elements are in a symbolic power but not in an ordinary power. In 2018 Galetto, Geramita, Shun and Van Tuyl~\cite{GGSVT2019} introduced the \emph{$r$-th symbolic defect} of an ideal $I$ 
$$
    \sdefect(r, I) = \mu(I^{(r)}/I^r),
$$
where $\mu$ denotes number of minimal generators, as a way to measure how far the $r$-th symbolic power is from the $r$-th ordinary power of $I$. Extremal ideals provide a means of bounding the symbolic defects of all square-free monomial ideals.

\begin{theorem}\label{c:sdefectbound}
    Let $I$ be a square-free monomial ideal minimally generated by $q$ elements. Then for any $r \ge 1$
    $$
        \sdefect(r, I) \leq \sdefect(r, \E_q).
    $$
\end{theorem}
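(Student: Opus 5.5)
The plan is to mirror the proof of \cref{thm:integral-defect} almost verbatim, with \cref{thm:symbolic-extremal} in place of \cref{thm:integral-closure-extremal}.

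Set $d=\sdefect(r,I)$. Since $I^{(r)}$ and $I^r$ are monomial ideals, a minimal generating set of the module $I^{(r)}/I^r$ can be taken to consist of the images of monomials. Concretely, the minimal monomial generators of $I^{(r)}$ that do not lie in $I^r$ give such a set. Thus there are monomials
\[
M_1,\dots,M_d\in \mingens(I^{(r)})\setminus I^r .
\]
To justify that the count $\mu(I^{(r)}/I^r)$ is exactly the number of minimal monomial generators of $I^{(r)}$ outside $I^r$, I would use the standard multigraded Nakayama argument. This is the same fact used implicitly in the integral closure case.

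Next, \cref{thm:symbolic-extremal} gives $\psi_I({\E_q}^{(r)})R=I^{(r)}$. Hence the set of monomials $\{\psi_I(E)\colon E\in\mingens({\E_q}^{(r)})\}$ generates $I^{(r)}$. Any monomial generating set of a monomial ideal contains its unique minimal monomial generating set. So each $M_i$ equals $\psi_I(E_i)$ for some $E_i\in\mingens({\E_q}^{(r)})$. The $E_i$ are pairwise distinct, because the $M_i$ are distinct.

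Then I would show $E_i\notin \Erq$. If $E_i\in\Erq$, then by \cref{l:Lr-extremal}(3) we would have $M_i=\psi_I(E_i)\in\psi_I(\Erq)R=I^r$, a contradiction. So $E_1,\dots,E_d$ are distinct minimal generators of ${\E_q}^{(r)}$ lying outside $\Erq$. Applying the Nakayama fact to $\E_q$, their images are part of a minimal generating set of ${\E_q}^{(r)}/\Erq$. It follows that $d\le \sdefect(r,\E_q)$.

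There is no real obstacle here. The only point needing care is the identification of $\mu$ of the quotient with a count of minimal monomial generators not in the smaller ideal, which holds because everything is $\NN^n$-graded.
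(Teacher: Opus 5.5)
Your proposal is correct and is essentially the paper's proof. The paper simply says the argument of \cref{thm:integral-defect} goes through verbatim, with $\psi_I({\E_q}^{(r)})R=I^{(r)}$ from \cref{thm:symbolic-extremal} replacing \cref{thm:integral-closure-extremal}, which is exactly what you carry out; your remarks on the distinctness of the $E_i$ and the Nakayama identification of $\mu(I^{(r)}/I^r)$ spell out what the paper leaves implicit.
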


\begin{proof}
The proof is identical to the proof of
\cref{thm:integral-defect}, up to replacing integral closures of powers with symbolic powers and using $\psi_I({\E_q}^{(r)})=I^{(r)}$ from \cref{thm:symbolic-extremal}. 
\end{proof}

\begin{example}\label[example]{ex:p3}
  For $q=3, r=2$ a direct computation via Macaulay2~\cite{M2} shows that
    \[
    {\E_3}^{(2)} = {\E_3}^2 + (y_{_1}y_{_2}y_{_3}y_{_{12}}y_{_{13}}y_{_{23}}y_{_{123}}^2).
    \]
     Thus by \cref{c:sdefectbound}, $\sdefect(2, I) \leq 1$ for any ideal
     $I$ generated by three square-free monomials.
     By \cref{c:symbolic=ordinary}, $I^{(2)}=I^2$ if and only if  $\psi_I(y_{_1}y_{_2}y_{_3}y_{_{12}}y_{_{13}}y_{_{23}}y_{_{123}}^2) \in \psi_I({\E_3}^2)=I^2$. 
 \end{example} 

 It follows from~\cref{thm:integral-closure-extremal,thm:symbolic-extremal} that extremal ideals play a key role in the study of containments of different notions of powers of (square-free monomial) ideals. 
 An active area of study for symbolic powers is the \emph{containment problem}, which seeks to identify pairs of integers $a, b$ such that $I^{(b)} \subseteq I^a$.  Examples of invariants that were defined in order to study containment problems of an ideal $I$ are the \emph{resurgence} 
 $$
    \rho(I) = \sup \big \{\frac{s}{r}\  \st s,r \in \NN \qand  I^{(s)} \not \subset I^r \big \} 
 $$  
 defined by Bocci and Harbourne~\cite{BH10}, and the \emph{asymptotic resurgence}  
   $$ \rho_a(I) = \sup \big \{\frac{s}{r} \st s,r \in \NN \qand  I^{(st)} \not \subset I^{rt} \qforall t \gg 0 \big\}
 $$
 defined by Guardo, Harbourne and Van Tuyl~\cite{GHVT2013}.
 It was then shown in~\cite[Section 4]{DFMS2019} that 
 $$
    \rho_a(I) = \sup \big \{\frac{s}{r} : s, r \in \NN \qand I^{(s)} \not \subset \overline{I^r} \big \}.
 $$
Extremal ideals bound both of these invariants, as seen in the next result.

 \begin{corollary}\label[corollary]{c:rho}
     Let $I$ be a square-free monomial ideal minimally generated by $q$ elements.
     Then 
     $$
        \rho(I) \leq \rho(\E_q) \qand \rho_a(I) \leq \rho_a(\E_q).
     $$
 \end{corollary}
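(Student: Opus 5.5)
The argument is a contrapositive: any failed containment for $I$ transfers to the same failed containment for $\E_q$. Then the supremum defining each invariant for $I$ is taken over a subset of the pairs used for $\E_q$, and both inequalities follow at once. For the asymptotic resurgence I will use the reformulation from \cite{DFMS2019} recalled above,
$$\rho_a(I) = \sup \big \{\tfrac{s}{r} : I^{(s)} \not \subset \overline{I^r} \big \},$$
which is also valid for $\E_q$ since it is a square-free monomial ideal.

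\textbf{Resurgence.} Fix $s,r\in\NN$ with $I^{(s)}\not\subseteq I^r$, and suppose instead that ${\E_q}^{(s)}\subseteq {\E_q}^r$. Apply $\psi_I$ and extend to ideals of $R$. Since $\psi_I(J)R\subseteq \psi_I(K)R$ whenever $J\subseteq K$, \cref{thm:symbolic-extremal} and \cref{l:Lr-extremal}(3) give
$$I^{(s)}=\psi_I({\E_q}^{(s)})R\subseteq \psi_I({\E_q}^r)R=I^r.$$
This contradicts the choice of $(s,r)$. Hence every pair $(s,r)$ that enters the supremum for $\rho(I)$ also enters the supremum for $\rho(\E_q)$, and so $\rho(I)\le\rho(\E_q)$.

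\textbf{Asymptotic resurgence.} The argument is the same with $I^r$ replaced by $\overline{I^r}$. Suppose $I^{(s)}\not\subseteq\overline{I^r}$ but ${\E_q}^{(s)}\subseteq\overline{{\E_q}^r}$. Applying $\psi_I$ and using \cref{thm:symbolic-extremal} together with \cref{thm:integral-closure-extremal} gives
$$I^{(s)}=\psi_I({\E_q}^{(s)})R\subseteq\psi_I(\overline{{\E_q}^r})R=\overline{I^r},$$
again a contradiction. So $\rho_a(I)\le\rho_a(\E_q)$.

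\textbf{Possible difficulties.} There is no real obstacle; the only care needed is in the bookkeeping.
\begin{enumerate}
\item The equalities from \cref{thm:symbolic-extremal} are stated with $\psi_I(\cdot)$ meaning the ideal it generates in $R$, so each inclusion above must be read in $R$. Monotonicity of $J\mapsto\psi_I(J)R$ handles this.
\item If one preferred not to use the \cite{DFMS2019} reformulation, one would argue directly from the definition with powers $st, rt$ for $t\gg0$. The same transfer works for each $t$, because failure of containment for $I$ at level $t$ forces failure for $\E_q$ at that same level.
\item If the set of pairs for $I$ is empty, its supremum is at most that for $\E_q$ under any standard convention, so this degenerate case causes no trouble.
\end{enumerate}
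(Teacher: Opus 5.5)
Your proposal is correct and is essentially the paper's own proof. Both arguments apply $\psi_I$ to transfer a failed containment for $I$ to the same failed containment for $\E_q$: \cref{thm:symbolic-extremal} with \cref{l:Lr-extremal}(3) handles $\rho$, and \cref{thm:symbolic-extremal} with \cref{thm:integral-closure-extremal} plus the \cite{DFMS2019} reformulation handles $\rho_a$, so the defining set for $I$ lies inside the one for $\E_q$.
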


 \begin{proof}
     We first prove the result for $\rho_a$. By~\cref{thm:integral-closure-extremal,thm:symbolic-extremal} we know 
     $$
        I^{(s)} \not \subset \overline{I^r} \implies {\E_q}^{(s)} \not \subset \overline{{\E_q}^r}. 
     $$
     In particular, we conclude 
     \begin{equation*}
        \left\{\frac{s}{r}: s, r \in \NN \qand I^{(s)} \not \subset \overline{I^r}\right\} \subset \left\{\frac{s}{r} : s,r \in \NN \qand {\E_q}^{(s)} \not \subset \overline{{\E_q}^r}\right\}. 
     \end{equation*}
     The proof for resurgence is analogous, but instead of~\cref{thm:integral-closure-extremal} we use~\cref{l:Lr-extremal}\eqref{i:psi-intersection3}.
 \end{proof}

 \begin{example}
     Computations using the Normaliz~\cite{NormalizSource} and the SymbolicPowers~\cite{SymbolicPackage} packages from Macaulay2~\cite{M2} show that 
     $$
         {\E_4}^{(7)} \subset \overline{{\E_4}^5}.
     $$
     This computation shows that given any square-free monomial ideal $I$ generated by $4$ elements, we have $I^{(7)} \subseteq \overline{I^5}$.
     See \cref{fig:enter-label} for additional results of these calculations.
     \begin{figure}
    \centering
    \includegraphics[width=1\linewidth]{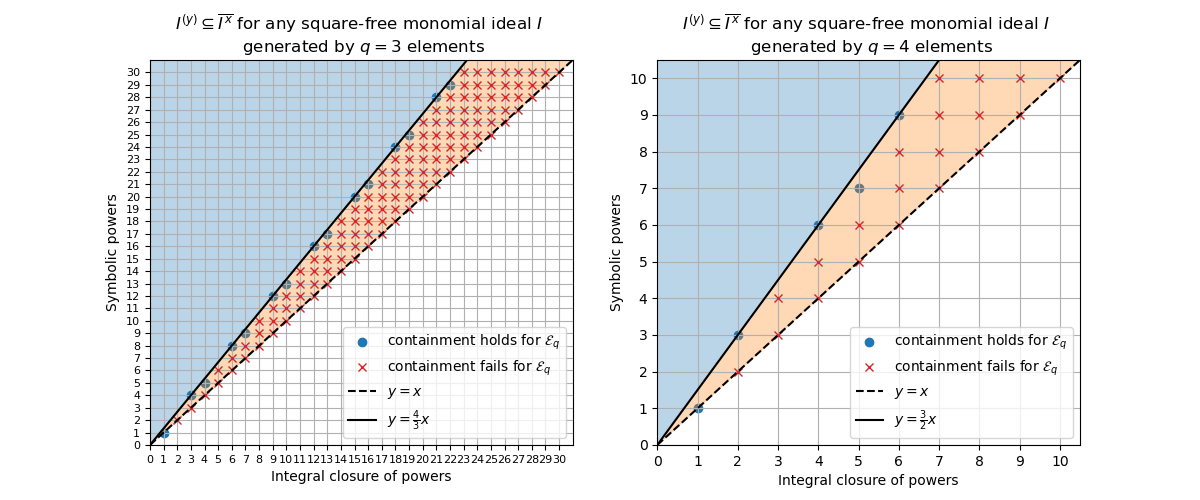}
    \caption{Dots and points above the solid line represent pairs $(x, y)$ such that $I^{(x)} \subseteq \overline{I^y}$ for every square-free monomial ideal generated by $q$ elements, and an $\times$ or point below the dotted line means ${\E_q}^{(x)} \not\subseteq \overline{{\E_q}^y}$.}
    \label{fig:enter-label}
\end{figure}
 \end{example}

    In 2023 Villarreal~\cite{V2023} studied the asymptotic resurgence of square-free monomial ideals from a geometric perspective, and in doing so, provided a concrete method for computing asymptotic resurgence of square-free monomial ideals as the minimum value of certain dot products of vectors (see~\cite[Theorem 3.7]{V2023}). 
Following~\cite[Procedure A.1]{V2023}, and then applying~\cite[Theorem 3.7]{V2023} allows us to compute $\rho_a(\E_q)$ for low values of $q$ where a direct computation is feasible. When $q=3$, using that $\E_3$ is normal by~\cref{t:general-I-ic}\eqref{i:q=3}, we are also able to compute $\rho(\E_3)$. Combining these computations with~\cref{c:rho}, we conclude the following.

\begin{proposition}\label[proposition]{p:resurgence-cases}
 Let $I$ be a square-free monomial ideal minimally generated by $q$ elements.
Then
\begin{eqnarray*}
\rho(I)=\rho_a(I) \leq \rho(\E_3)=\rho_a(\E_3) = 4/3 &\qwhen q=3;\\
\rho_a(I) \leq \rho_a(\E_4) = 3/2 &\qwhen q=4;\\
\rho_a(I) \leq \rho_a(\E_5) =8/5  &\qwhen q=5.
   \end{eqnarray*}
\end{proposition}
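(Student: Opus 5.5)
The plan has two layers. First, the inequalities for $I$ follow from \cref{c:rho}, once the values for $\E_q$ are computed. Second, for $q=3$ we upgrade to $\rho(I)=\rho_a(I)$ using normality.

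For the values, I would follow Villarreal's method~\cite[Theorem 3.7]{V2023}. For a square-free monomial ideal $J$, $\rho_a(J)$ is obtained by optimizing over two finite sets of vectors. The first set is the vertices of the symbolic polyhedron, obtained via \cite[Procedure A.1]{V2023}. The second is the set of facet normals of the Newton polyhedron of $J$. The asymptotic resurgence is the maximum over these pairs of a ratio of dot products.

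For $\E_q$ this is a linear programming problem in $2^q-1$ variables. The key ingredients are as follows.
\begin{itemize}
\item By \cref{l:closure-r-alpha}, the Newton polyhedron of $\E_q$ is described by the inequalities $b_A\ge\sum_{i\in A}\alpha_i$ with $\sum\alpha_i=1$. Eliminating the $\alpha_i$ gives its facets.
\item The minimal primes of $\E_q$ are the ideals $(y_{A_1},\dots,y_{A_s})$ whose sets $A_1,\dots,A_s$ form a minimal cover of $[q]$. These give the symbolic polyhedron $\{b:\sum_{j}b_{A_j}\ge 1\}$.
\end{itemize}
For $q=3,4,5$ the resulting linear programs are small enough to solve by computer and yield $4/3$, $3/2$ and $8/5$.

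As a hand sanity check for $q=3$, take the symbolic generator $h=y_1y_2y_3y_{12}y_{13}y_{23}y_{123}^2\in{\E_3}^{(2)}$ from \cref{ex:p3}, and the facet functional
\[
\textstyle\sum_{i}b_i+\sum_{i<j}b_{ij}\ \ge\ 4r,
\]
valid on $\overline{{\E_3}^r}$ by summing the constraints of \cref{l:closure-r-alpha}. Then $h^{2t}$ lies in ${\E_3}^{(4t)}$ but its functional value is $12t$, so it fails to lie in $\overline{{\E_3}^{3t+1}}$. This gives the lower bound $\rho_a(\E_3)\ge 4/3$. For the upper bound, I would check that every vertex of the symbolic polyhedron satisfies all facet inequalities with ratio at most $4/3$.

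For $q=3$, \cref{t:general-I-ic}\eqref{i:q=3} says that $I$ and $\E_3$ are normal, so $\overline{I^r}=I^r$ for all $r$. Then the two sup-formulas defining $\rho$ and the formula for $\rho_a$ from \cite{DFMS2019} coincide, giving $\rho(I)=\rho_a(I)$ and $\rho(\E_3)=\rho_a(\E_3)$.

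The main obstacle is certifying the upper bounds, especially for $q=5$, where there are $31$ variables and many minimal covers. Here I would rely on the computation, checking optimality by exhibiting a dual certificate for the linear program.
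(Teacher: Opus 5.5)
Your route is the same as the paper's. You compute $\rho_a(\E_q)$ for $q=3,4,5$ by running Villarreal's procedure \cite[Procedure A.1, Theorem 3.7]{V2023} on $\E_q$ by computer, and transfer the bounds to $I$ via \cref{c:rho}. For $q=3$ you use normality of $I$ and $\E_3$ (\cref{t:general-I-ic}\eqref{i:q=3}) to make the sup-formulas for $\rho$ and $\rho_a$ coincide. That part is fine.

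Your hand check of $\rho_a(\E_3)\ge 4/3$ is wrong as written. Summing the constraints of \cref{l:closure-r-alpha} over singletons gives $\sum_i b_i\ge r$, and over pairs gives $\sum_{i<j}b_{ij}\ge 2r$. So the valid inequality is $\sum_i b_i+\sum_{i<j}b_{ij}\ge 3r$, not $4r$. Indeed $\e_1^r\in{\E_3}^r$ has value exactly $3r$, so your inequality is false. With the correct right-hand side $3(3t+1)=9t+3\le 12t$, the value $12t$ of $h^{2t}$ certifies nothing.

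The functional that works is the pair sum alone: $b_{12}+b_{13}+b_{23}\ge 2r$ on $\overline{{\E_3}^r}$. This is in fact a facet inequality of the Newton polyhedron; its face is $\conv(\e_1,\e_2,\e_3)$ plus the cone on $\be_1,\be_2,\be_3,\be_{123}$, which is $6$-dimensional. For $h^{2t}$ this sum is $6t<2(3t+1)$. Hence $h^{2t}\in{\E_3}^{(4t)}\setminus\overline{{\E_3}^{3t+1}}$, giving $\rho_a(\E_3)\ge 4t/(3t+1)\to 4/3$.

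The upper bounds, and all the values for $q=4,5$, still rest on the machine computation, exactly as in the paper.
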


Extremal ideals are a viable theoretical tool for studying uniform containments for arbitrary square-free monomial ideals generated by $q$ elements. By \cref{thm:integral-closure-extremal,thm:symbolic-extremal}, if ${\E_q}^{(r)} \subseteq \overline{\Erq}$ for some $r,q$ then $I^{(r)}\subseteq \overline{I^r}$ for all square-free monomial ideals with $q$ generators. However, if ${\E_q}^{(r)} \not\subseteq \overline{\Erq}$, then containment may or may not hold for an ideal generated by $q$ square-free monomials.

\begin{question}[{\bf Containments of types of powers of square-free monomial ideals}]\label[question]{q:containment}
    Given a fixed $q \ge 1$, for which $r, s$ is it true that $I^{(s)} \subseteq \overline{I^r}$ for {\it every} square-free monomial ideal generated by $q$ elements? In other words, what is the asymptotic resurgence of $\E_q$?
\end{question}

\section{\bf  The Second Symbolic Defect}\label[section]{sec:2nd-symbolic-powers}

This section focuses on the second symbolic power of any square-free monomial ideal $I$ via the map $\psi_I$. In \cref{thm:gens-2nd-symbolic}, we provide a concrete formula for computing the generators of $I^{(2)}$. In~\cref{c:explicit-bound} we give an explicit computation of the second symbolic defect of $\E_q$, which leads to a sharp upper bound for the second symbolic defect of arbitrary square-free monomial ideals.  Recall that in our setting of square-free monomial ideals, $\Ass(I)=\minimalprimes(I)$, allowing us to use both algebraic and geometric tools  to study symbolic powers.

We begin by introducing our main tool, the symbolic polyhedron defined in~\cite{CEHH2017}. 
Given a convex set $A$ and an integer $r \ge 1$, the \emph{dilation} of $A$ by $r$ is the convex set 
$$
    rA = \{\ba_1 + \dots + \ba_r \st \ba_i \in A \qforall i\}.
$$
Note that if $A$ can be written as the intersection of convex sets $A_1, \dots, A_s$, then $rA = \bigcap_{i=1}^s r A_i$.
For a square-free monomial ideal $I$, let 
$$
    \Ll(I) = \{\ba \in \NN^n \st \bx^{\ba} \in I\}.
$$
The \emph{symbolic polyhedron} of $I$ is the convex set 
$$
    \Pp(I) = \bigcap_{P \in \Ass(I)}\conv(\Ll(P)).
$$
Note that since multiplication of monomials corresponds to addition of exponents, taking powers of ideals corresponds to dilations of the convex sets defined by the monomials, as in \cref{d:conv}. In particular, $r\conv(\Ll(I)) = \conv(\Ll(I^r))$ for any monomial ideal $I$. Thus the symbolic polyhedron of $I$ allows us to translate questions about containment of elements in the symbolic powers of $I$ to questions about lattice points of dilations of $\Pp(I)$. 
The precise statement appears in~\cref{l:symbolicpolyhedron}.
One direction of~\cref{l:symbolicpolyhedron} is shown in greater generality in~\cite{CEHH2017} for arbitrary monomial ideals. Since we also need the converse for square-free monomial ideals, and this part of the statement is implicit in~\cite{CEHH2017}, we include a proof below.

\begin{lemma}[{\cite[Theorem 5.4]{CEHH2017}}]\label[lemma]{l:symbolicpolyhedron}
    Let $I \subset \sfk[x_1, \dots, x_n]$ be a square-free monomial ideal, $r \ge 1$ an integer, and $\ba \in \NN^n$. Then $\bx^{\ba} \in I^{(r)}$ if and only if $\ba \in r\Pp(I)$.
\end{lemma}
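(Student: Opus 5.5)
Since $I$ is square-free, \cref{d:symbolic-powers} gives $I^{(r)}=\fp_1^r\cap\cdots\cap\fp_t^r$, where $\fp_1,\dots,\fp_t$ are the associated (equivalently, minimal) primes of $I$, each generated by a subset of the variables. The plan is to reduce the statement to a single prime $P=(x_{i_1},\dots,x_{i_s})$ and then intersect. By definition
$$r\Pp(I)=r\bigcap_{P}\conv(\Ll(P))=\bigcap_P r\conv(\Ll(P))=\bigcap_P\conv(\Ll(P^r)),$$
using the remark that dilation commutes with intersection and the identity $r\conv(\Ll(J))=\conv(\Ll(J^r))$ noted before the lemma. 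Monomial membership in an intersection of monomial ideals is membership in each. Therefore it suffices to show, for each prime $P$ generated by variables, that
$$\bx^\ba\in P^r \iff \ba\in\conv(\Ll(P^r)).$$
Restricting to lattice points $\ba\in\NN^n$ then finishes the argument.

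The forward direction is trivial: $\bx^\ba\in P^r$ means $\ba\in\Ll(P^r)$. For the converse, note that $\conv(\Ll(P^r))\cap\NN^n$ consists exactly of the exponents of monomials in $\overline{P^r}$, by \cref{l:integral-closure-basics}. So the key step is that powers of a prime generated by variables are integrally closed. I would verify this directly. A monomial $\bx^\ba$ lies in $P^r$ if and only if $a_{i_1}+\cdots+a_{i_s}\ge r$. The set of $\ba\in\RR_{\ge0}^n$ satisfying this linear inequality is convex and contains every exponent vector of a monomial in $P^r$. Hence it contains $\conv(\Ll(P^r))$, and any lattice point in that hull satisfies the inequality, so the corresponding monomial lies in $P^r$. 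Alternatively, one can note that $P^r$ is the $r$-th power of a normal ideal (polynomial ring in the variables of $P$), but the linear-inequality argument is self-contained.

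The step most likely to need care is the equality $r\bigcap_P\conv(\Ll(P))=\bigcap_P r\conv(\Ll(P))$. Dilation by $r$ of a convex set $A$ as defined equals the scaling $\{r\bv:\bv\in A\}$, because for convex $A$ the sum $\ba_1+\dots+\ba_r$ equals $r$ times their average, which lies in $A$. Scaling by $r>0$ is a bijection and so commutes with intersections. The paper also states this as a remark. With this in place, the lemma follows by combining the per-prime equivalence over all $P\in\Ass(I)$.
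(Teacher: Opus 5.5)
Your proposal is correct and follows essentially the same route as the paper: you reduce to each minimal prime $P$ via $r\Pp(I)=\bigcap_P \conv(\Ll(P^r))$, and then use that lattice points of $\conv(\Ll(P^r))$ are exactly exponents of monomials in $P^r$. The differences are that you prove the forward direction yourself rather than citing \cite[Theorem 5.4]{CEHH2017}, and you explicitly justify two steps the paper leaves implicit: that lattice points of $\conv(\Ll(P^r))$ give monomials in $P^r$, via the inequality $a_{i_1}+\cdots+a_{i_s}\ge r$, and that dilation commutes with intersection, via scaling.
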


\begin{proof}
    We only need to show that if $\ba \in r \Pp(I)$, then $\bx^{\ba} \in I^{(r)}$, as the other direction is a special case of~\cite[Theorem 5.4]{CEHH2017}.
    If $\ba \in r \Pp(I)$, then $\ba \in r\conv(\Ll(P)) = \conv(\Ll(P^r))$ for every $P \in \Ass(I) = \minimalprimes(I)$. In particular, we have that $\bx^{\ba} \in P^r$ for every minimal prime $P$ of $I$, and hence $\bx^{\ba} \in I^{(r)}$.
\end{proof}

A direct corollary of \cref{l:symbolicpolyhedron} is the following result that allows us to translate the problem of computing the symbolic powers of $\E_q$ to a linear algebra problem. In the context of extremal ideals, the symbolic polyhedron of $\E_q$ is a convex set in $\RR^{2^q - 1}$. When dealing with this polyhedron, we will
use
the notation in \cref{eq:additivenotation}.

Recall that a collection $\C$ of nonempty subsets of $[q]$ is a {\bf minimal set cover} of $[q]$ if 
$$[q] = \bigcup_{A \in \C}A, \qand 
A \not\subseteq \bigcup_{A' \ne A} A' \qforall A \in \C.
$$ 
We use $\CC_q$ to denote the set of all minimal set covers of $[q]$. We will also use the notation $\compl{A}$ to denote the {\bf complement} $[q]\ssm A$ of a set $A\subseteq [q]$.  

\begin{lemma}\label[lemma]{l:symboliclinalg}
    Let $q \ge 1$ and $\bb=\displaystyle \sum_{ \emptyset \neq A\subseteq [q] } b_A\be_A\in \mathbb N^{2^q-1}$.  
    \begin{enumerate}    
    \item\label{i:symbolic-equiv} The following are equivalent:
        \begin{enumerate}
        \item[(a)] $\by^{\bb} \in {\E_q}^{(2)}$;
        \item[(b)] $\bb \in 2\Pp(\E_q)$;
        \item[(c)]\label{i:3} $\sum_{A \in \C} b_{A} \geq 2$ for every $\C \in \CC_q$.
        \end{enumerate}
     \item\label{i:symbolic-min} If $\by^{\bb}$ is a minimal generator of ${\E_q}^{(2)}$, then
     \begin{enumerate}
     \item[(a)] $b_{A} \leq 2$ for all $A$;
     \item[(b)] $b_{[q]} = 2.$
     \end{enumerate}
      \item\label{i:bAT} If $\by^{\bb} \in {\E_q}^{(2)}$ and $b_A = 0$ for some $A$,  then $b_T \geq 2$ whenever $\compl{A} \subseteq T\subseteq [q]$.    
      \end{enumerate}
     \end{lemma}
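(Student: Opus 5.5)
The plan is to first pin down the minimal primes of $\E_q$, since everything reduces to them. A prime $P=(y_{A}\st A\in \mathcal{F})$ contains $\E_q$ exactly when every $\e_i$ is divisible by some $y_A$ with $A\in\mathcal{F}$. By the definition of $\e_i$, this means that for each $i\in[q]$ some $A\in\mathcal{F}$ contains $i$, i.e.\ $\mathcal{F}$ covers $[q]$. Minimal primes then correspond exactly to minimal set covers $\C\in\CC_q$. Indeed, removing an $A$ from a cover keeps a cover if and only if $A\subseteq\bigcup_{A'\ne A}A'$. So
\[
\minimalprimes(\E_q)=\{P_\C=(y_A\st A\in\C)\st \C\in\CC_q\}.
\]

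For part \eqref{i:symbolic-equiv}, (a)$\iff$(b) is \cref{l:symbolicpolyhedron} with $r=2$. For (a)$\iff$(c), use \cref{d:symbolic-powers}: ${\E_q}^{(2)}=\bigcap_{\C\in\CC_q}P_\C^2$. A monomial $\by^\bb$ lies in the square of a monomial prime $P_\C$ if and only if its total degree in the variables of $P_\C$ is at least $2$, that is, $\sum_{A\in\C}b_A\ge 2$.

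For part \eqref{i:symbolic-min}, argue by minimality using criterion (c).
\begin{itemize}
\item For (a): if $b_A\ge3$, lowering $b_A$ to $2$ keeps every cover sum $\ge 2$. Any cover containing $A$ still has $b_A=2$, and covers not containing $A$ are unchanged. This contradicts minimality.
\item For (b): $\{[q]\}$ is itself a minimal set cover, so $b_{[q]}\ge2$, and (a) gives equality.
\end{itemize}

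For part \eqref{i:bAT}, suppose $b_A=0$ and $\compl{A}\subseteq T$. If $T=[q]$, then $\{[q]\}\in\CC_q$ gives $b_T\ge 2$. Otherwise $A\ne[q]$ and $T\ne[q]$, so $\{A,T\}$ covers $[q]$, because $A\cup T\supseteq A\cup\compl{A}$. We also need $A\neq\emptyset$, which holds since $b_A$ is defined only for nonempty $A$. Neither set contains the other's contribution redundantly: $A\not\subseteq T$, since otherwise $T=[q]$, and $T\not\subseteq A$, since $\compl{A}\subseteq T$ is nonempty. Hence $\{A,T\}\in\CC_q$, and (c) gives $b_A+b_T\ge2$, so $b_T\ge2$.

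The only delicate step is this minimality check, together with the edge cases $T=[q]$ and $A=[q]$. The latter is vacuous: if $A=[q]$, then (c) applied to $\{[q]\}$ gives $b_{[q]}\ge 2$, contradicting $b_A=0$. The identification of minimal primes with minimal set covers is the conceptual core, but it is routine.
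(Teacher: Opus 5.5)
Your proposal is correct and follows essentially the same route as the paper. You identify the minimal primes of $\E_q$ with minimal set covers to get (a)$\iff$(c), cite \cref{l:symbolicpolyhedron} for (a)$\iff$(b), and derive parts (2) and (3) from criterion (c) applied to $\{[q]\}$ and $\{A,T\}$; the only difference is that you spell out the prime/cover correspondence and the check that $\{A,T\}\in\CC_q$, which the paper treats as straightforward.
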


\begin{proof}
    To see the equivalence in \eqref{i:symbolic-equiv}, note that 
    \cref{l:symbolicpolyhedron}  gives $(a) \iff (b)$. Observe that for a given $\C \in \CC_q$, the inequality in $(c)$ is equivalent to the existence of an $A\in \C$ such that $b_A \geq 2$ or the existence of distinct $A,A'\in \C$ such that $b_A,b_{A'} \geq 1$. In other words, $(c)$ is equivalent to $\by^{\bb} \in (y_A \st A \in \C)^2$ for every $\C \in \CC_q$.  
    On the other hand, it is straightforward to see that $P$ is a minimal prime of $\E_q$ if and only if $P = (y_A \st A \in \C)$ for some $\C$ in $\CC_q$ (cf.\ also \cite{AlgII}). Hence, by the definition of symbolic powers, we conclude $(a) \iff (c)$.

    To see \eqref{i:symbolic-min}(a), assume $b_A > 2$ for some $A$. Since $\by^{\bb} \in {\E_q}^{(2)}$, by \eqref{i:symbolic-equiv}, we have $\frac{\by^{\bb}}{y_{A}} \in {\E_q}^{(2)}$, and in particular $\by^{\bb}$ is not a minimal generator of ${\E_q}^{(2)}$. To see \eqref{i:symbolic-min}(b), notice that $\{[q]\} \in \CC_q$, so since $\by^\bb \in {\E_q}^{(2)}$, by \eqref{i:symbolic-equiv}, $b_{[q]} \ge 2$ and the result follows from \eqref{i:symbolic-min}(a).

    For  \eqref{i:bAT}, assume $\by^{\bb} \in {\E_q}^{(2)}$ and $b_A=0$.  Since $\{[q]\} \in \CC_q$, by \eqref{i:symbolic-equiv} we have $b_{[q]} \ge 2$. Thus  $b_A=0$ implies $A \ne [q]$ and $T \ne \emptyset$. Similarly, if $T=[q]$, then $b_T \ge 2$ and the inequality holds. For the remaining cases, $\{A, T\}$ forms a minimal set cover of 
    $[q]$, and~\eqref{i:symbolic-equiv} implies
    $b_A + b_T \geq 2$, 
    from which the result follows.
\end{proof}

The main result of this section is an explicit description of the minimal generators of ${\E_q}^{(2)}$ for any $q \ge 1$. As a consequence, we obtain a formula for the second symbolic defect of $\E_q$ for any $q \ge 1$. We now introduce a formula, which we will show uniquely describes every minimal generator of ${\E_q}^{(2)}$.   

\begin{definition}
Set $y_\emptyset=1$ and let $\bj = \sum_{\emptyset \neq A \subseteq [q]}\be_A$ be the vector with all entries equal to $1$. For any $q \ge 1$ and each $\emptyset \neq X\subseteq [q]$, we define
\begin{equation}\label{eq:f-defn}
    f_X = \by^{\bj} \prod_{X \subseteq A } \frac{y_{A}}{y_{\compl{A}}}
        = \by^\bj \frac{\displaystyle\prod_{X\subseteq A}y_A}{\displaystyle\prod_{A\subseteq \compl{X}}y_A}=\prod_{X \subseteq A} {y_A}^2 \prod_{\substack{X \cap A \neq \emptyset\\ X \cap \compl{A} \neq \emptyset}} y_A 
\end{equation}
where all the sets $A$ appearing above are assumed to be nonempty subsets of $[q]$.
Define 
\begin{equation}\label{eq:mingens-symbolic2}
\Omega_q^2\coloneqq \{f_X \colon  \emptyset \neq X\subseteq [q]\}.
\end{equation}
\end{definition}

We note some important features of $f_X$ below.
\begin{lemma}\label[lemma]{e:smallX}
Let $q \ge 1$, $i, j \in [q]$ and $\emptyset \ne X,Y\subseteq [q]$.  Then 
\begin{enumerate}
    \item $f_{\{i\}} = {\e_i}^2$;
    \item $f_{\{i,j\}}= \e_i\e_j$;
    \item $f_{[q]}=\by^{\bj+\be_{[q]}}$;
    \item $\deg(f_X)=2^q$;
   \item $f_X\neq f_Y$ if $X\neq Y$;
   \item $|X| \le 2$ if and only if $f_X \in {\E_q}^2$.
\end{enumerate}
\end{lemma}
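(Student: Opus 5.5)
The plan is to work entirely with exponent vectors, using the last expression in \cref{eq:f-defn}. For a nonempty $A\subseteq[q]$, the exponent of $y_A$ in $f_X$ is
\[
(f_X)_A=\begin{cases}2 & \text{if } X\subseteq A,\\ 1 & \text{if } X\cap A\neq\emptyset \text{ and } X\cap\compl{A}\neq\emptyset,\\ 0 & \text{if } A\subseteq \compl{X}.\end{cases}
\]
Equivalently, $(f_X)_A = [X\subseteq A] + [X\cap A\neq\emptyset]$. All six items will be read off from this formula and compared with \cref{eq:ea}, which says that the exponent of $y_A$ in $\pmea$ is $|\ba|_A$.

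For (1), take $X=\{i\}$. Then $(f_X)_A=2$ when $i\in A$ and $0$ otherwise, which is $|2\be_i|_A$, so $f_{\{i\}}=\e_i^2$. For (2), take $X=\{i,j\}$. The exponent is $2,1,0$ according to whether $A$ contains both, exactly one, or neither of $i,j$, which is $|A\cap\{i,j\}|$, so $f_{\{i,j\}}=\e_i\e_j$. For (3), take $X=[q]$. Then $X\subseteq A$ only for $A=[q]$, and every other nonempty $A$ meets both $X$ and $\compl{A}\neq\emptyset$, so every other exponent equals $1$. Hence $f_{[q]}=\by^{\bj+\be_{[q]}}$.

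For (4), I use the middle expression in \cref{eq:f-defn}, $\deg f_X = (2^q-1) + \#\{A: X\subseteq A\} - \#\{\emptyset\ne A\subseteq \compl{X}\}$. Complementation $A\mapsto\compl{A}$ is a bijection from $\{A : X\subseteq A\}$ onto $\{B : B\subseteq\compl{X}\}$. Under it, $A=[q]$ is the unique set sent to $B=\emptyset$. Hence the first count exceeds the second by exactly $1$, giving $\deg f_X=2^q$. For (5), the sets $A$ with $(f_X)_A=2$ form exactly the up-set $\{A: X\subseteq A\}$, whose unique minimal element is $X$. So $X$ is recovered from $f_X$, and the map $X\mapsto f_X$ is injective.

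For (6), the direction $|X|\le2\Rightarrow f_X\in{\E_q}^2$ is immediate from (1) and (2). The converse is the only step needing an idea. Suppose $f_X\in{\E_q}^2$. Then some minimal generator $\pmea$ with $\ba\in\N^2_q$ divides $f_X$. By (1) and (2), this generator is $f_{Y}$ for some $Y$ with $|Y|\in\{1,2\}$, namely $Y=\{i\}$ when $\ba=2\be_i$ and $Y=\{i,j\}$ when $\ba=\be_i+\be_j$. By (4), $f_Y$ and $f_X$ both have degree $2^q$, so divisibility forces $f_Y=f_X$. Then (5) gives $X=Y$, hence $|X|\le 2$. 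The main care point is the counting in (4), specifically the treatment of the empty set under complementation, since (6) depends on it.
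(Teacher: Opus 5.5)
Your proof is correct and follows essentially the paper's argument: exponent bookkeeping for (1)--(3), the pairing $A\leftrightarrow\compl{A}$ with $[q]\leftrightarrow\emptyset$ as the one unmatched pair for (4), recovering $X$ from the exponent-$2$ variables for (5), and the equal-degree-plus-injectivity argument for (6). The only small addition is that your description of the exponent as $|A\cap\{i,j\}|$ in (2) presumes $i\neq j$; the case $i=j$ is just (1), as the paper notes.
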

\begin{proof}
With the indices $A$ and $A_i$ below representing  nonempty  subsets of $[q]$, we first prove Statements~(1)--(4). For~(1), observe that
$$
     f_{\{i\}} = \by^{\bj} {\displaystyle \prod_{\tiny i \in A } \frac{y_{A}}{y_{\compl{A}}}}
         = \frac{\displaystyle \prod_{\tiny A } y_A \prod_{\tiny i \in A }y_A}{\displaystyle \prod_{\tiny i \not\in A }y_A}
         = \big ({\displaystyle \prod_{\tiny i \in A }}y_A\big)^2 = \e_i^2.
$$
For~(2) we can assume that $i \neq j$, as otherwise the statement is proved in~(1). We have
$$\begin{array}{rl}
f_{\{i,j\}} &=\by^{\bj} {\displaystyle \prod_{\tiny \{i,j\} \subseteq A } \frac{y_{A}}{y_{\compl{A}}}}
    = \frac{\displaystyle \prod_{\tiny  A } y_A \prod_{\tiny \{i,j\} \subseteq A }y_A}{\displaystyle \prod_{\tiny \{i,j\}\cap A = \emptyset}y_A}\\
     &\\
    &=  \displaystyle \prod_{\tiny
    \substack{\{i,j\} \cap A_1 =\{i\}\\
              \{i,j\} \cap A_2 = \{j\}\\
              \{i,j\} \cap A_3=\{i,j\}}}
              y_{A_1}  y_{A_2}  {y_{A_3}}^2 
            =  \big( \prod_{ i\in  A }y_A \big) 
              \big( \prod_{ j\in A}y_A \big) =\e_i\e_j.
\end{array}
$$               
Statement~(3) can be seen from 
$$
    f_{[q]} =\by^{\bj} \displaystyle \prod_{\tiny  [q]\subseteq A} \frac{y_{A}}{y_{\compl{A}}}=\by^{\bj}\frac{y_{[q]}}{y_{\emptyset}} =\by^{\bj+\be_{[q]}}.
$$
For Statement (4), let $X$ be a nonempty subset of $[q]$. Note that $\deg(y_{[q]})=1$, $\deg(y_{\emptyset})=0$, and if $A \notin 
\{[q], \emptyset\}$, then $\deg (y_A) = \deg (y_{\compl{A}}) =1$. Thus
$$\begin{array}{rl}
\deg(f_X)
= &\deg(\by^{\bj}) + \displaystyle \Big ( \sum_{X \subseteq A \subsetneq [q]}\big( \deg(y_A) - \deg(y_{\compl{A}}) \big ) \Big ) + (\deg(y_{[q]})-\deg(y_{\emptyset}))\\
= &(2^q - 1)  + \displaystyle \Big ( \sum_{X \subseteq A \subsetneq [q]}\big( 1 - 1 \big ) \Big )+ (1-0)= 2^q,
\end{array}
$$
as desired.

For Statement (5), assume that $X\neq Y$ for nonempty $X,Y\subseteq [q]$. Without loss of generality, assume that $X\not\subseteq Y$. Then $y_Y^2\mid f_Y$ but $y_Y^2 \nmid f_X$. Thus $f_X \ne f_Y$.

For Statement (6), first note that ${\E_q}^2$ is minimally generated by $\e_i\e_j$ with  $1 \le i \leq j \le q$ (see \cref{l:Lr-extremal}). If $|X| \le 2$, it then follows immediately from Statements (1) and (2) that $f_X \in {\E_q}^2$. If $|X| > 2$, then by (4) a degree argument shows $f_X$ is not a multiple of $f_{\{i\}}$ or $f_{\{i,j\}}$ for any $i,j \in [q]$. Since by (5), $f_X$ is also not equal to these elements, $f_X \not\in {\E_q}^2$ and the result follows.
\end{proof}

Before attacking our main result of the section, we prove a technical lemma. 
%
  %

\begin{lemma}\label[lemma]{l:claim}
Assume 
$$\by^\bb \in \mingens({\E_q}^{(2)}) \qwith \bb = \sum_{\emptyset \ne A \subseteq [q]} b_A\be_A \in \mathbb{N}^{2^q-1} .$$ 
      Set 
      $\DD \coloneqq \{ A\subseteq [q]\colon b_A=0 \}.$
  Then  $\DD = \emptyset$ or $\DD$ has a maximum element with respect to set inclusion. 
\end{lemma}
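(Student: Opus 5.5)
The plan is to show that $\DD$ is closed under unions. Once this holds, if $\DD\neq\emptyset$ then the union of all members of $\DD$ (a finite family) lies in $\DD$ and contains every member of $\DD$, so it is the maximum element. Thus it suffices to fix $A,B\in\DD$, i.e.\ $b_A=b_B=0$, and prove $b_{A\cup B}=0$. Throughout I would work with the combinatorial criterion of \cref{l:symboliclinalg}\eqref{i:symbolic-equiv}(c): $\by^{\bb}\in{\E_q}^{(2)}$ if and only if $\sum_{C\in\C}b_C\ge 2$ for every minimal set cover $\C\in\CC_q$.

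The key step is a cover-replacement argument. Let $\C\in\CC_q$ be any minimal set cover with $A\cup B\in\C$. The collection $(\C\smallsetminus\{A\cup B\})\cup\{A,B\}$ still covers $[q]$, so it contains some minimal set cover $\C'\in\CC_q$. Since $\by^\bb\in{\E_q}^{(2)}$, we have $\sum_{C\in\C'}b_C\ge 2$. Moreover $b_A=b_B=0$, so only the members of $\C'$ lying in $\C\smallsetminus\{A\cup B\}$ contribute, which gives
\[
\sum_{C\in\C\smallsetminus\{A\cup B\}} b_C\;\ge\;\sum_{C\in\C'} b_C\;\ge\;2 .
\]
A special case shows that $A\cup B\neq[q]$. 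If $A\cup B=[q]$, take $\C=\{[q]\}$; then $\C'\subseteq\{A,B\}$ and the sum over $\C'$ is $0$, contradicting membership in ${\E_q}^{(2)}$. This case needs no separate treatment, since the displayed inequality already fails there.

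Now suppose for contradiction that $b_{A\cup B}\ge 1$, and let $\bb'=\bb-b_{A\cup B}\,\be_{A\cup B}$. I would check criterion (c) for $\bb'$. Minimal covers not containing $A\cup B$ have the same sums for $\bb'$ as for $\bb$, so they are fine. Minimal covers containing $A\cup B$ have sum at least $2$ by the inequality above. Hence $\by^{\bb'}\in{\E_q}^{(2)}$ and $\by^{\bb'}$ properly divides $\by^{\bb}$. This contradicts $\by^\bb\in\mingens({\E_q}^{(2)})$, so $b_{A\cup B}=0$ and $A\cup B\in\DD$.

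The only delicate point I anticipate is making sure that passing to a minimal subcover $\C'$ is legitimate and does not lose the sets needed for the bound. This works because $b\ge 0$ throughout: discarding sets from $(\C\smallsetminus\{A\cup B\})\cup\{A,B\}$ only decreases the sum, so the bound for $\C'$ transfers to $\C\smallsetminus\{A\cup B\}$. Everything else is bookkeeping using \cref{l:symboliclinalg}.
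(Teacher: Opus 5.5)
Your proof is correct and follows the paper's argument: you show $\DD$ is closed under unions by deleting $y_{A\cup B}$, and for each minimal cover $\C$ containing $A\cup B$ you pass to a minimal subcover $\C'$ of $(\C\smallsetminus\{A\cup B\})\cup\{A,B\}$ to get $\sum_{C\in\C\smallsetminus\{A\cup B\}} b_C\ge 2$. The only differences are harmless: you use nonnegativity of $\bb$ where the paper checks that every set of $\C\smallsetminus\{A\cup B\}$ survives in $\C'$, and you remove the full power of $y_{A\cup B}$ rather than a single factor.
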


\begin{proof}
It suffices to show if $A_1, A_2 \in \DD$, then $A_1 \cup A_2 \in \DD$.
    Assume to the contrary that there exist $A_1, A_2 \in \DD$ such that $A_1 \cup A_2 \not \in \DD$. Then  $b_{A_1 \cup A_2} > 0$ by the definition of $\DD$. 
    Define $$\by^{\bb'} = \frac{\by^{\bb}}{y_{A_1 \cup A_2}} \qwith \bb' = \bb - \be_{A_1 \cup A_2}.$$
    We will show that $\by^{\bb'} \in {\E_q}^{(2)}$, contradicting the assumption that $\by^{\bb}$ is a minimal generator of ${\E_q}^{(2)}$, by demonstrating that the inequality in \cref{l:symboliclinalg}\eqref{i:symbolic-equiv} holds for every $\C \in \CC_q$.  
    Let $\C \in \CC_q$.  
    
    If $A_1 \cup A_2 \not\in \C$, then 
        $$\sum_{A \in \C} b'_{A} = \sum_{A \in \C } b_{A} \geq 2$$
        by \cref{l:symboliclinalg}\eqref{i:symbolic-equiv} since $\by^{\bb} \in {\E_q}^{(2)}$.
       
    If $A_1\cup A_2\in \C$,  then $\C = \{A_1 \cup A_2\} \cup \K$, for some $\K \subseteq 2^{[q]}$. Since $\C$ is a set cover of $[q]$, 
        $$(A_1 \cup A_2) \cup \bigcup_{K \in \K} K = [q]$$ so $\{A_1, A_2\} \cup \K$ is also set cover of $[q]$, and thus contains a minimal set cover $\C'$. Now $\K$ is not a set cover of $[q]$, so $\C'$ must contain $A_1$ or $A_2$ or both. On the other hand, since $\C$ is minimal,
       $$K'\not\subseteq A_1\cup A_2 \cup \bigcup_{\tiny \substack{K \in \K\\K\neq K'}} K \qforall K' \in \K$$
       which means all elements of $\K$ must remain in $\C'$. It follows that 
        $$  \C'= \quad \{A_1, A_2\} \cup \K \qor   \{A_2\} \cup \K \qor \{A_1\} \cup \K.$$
        Recall that  $\by^{\bb}\in {\E_{q}}^{(2)}$. By a combination of \cref{l:symboliclinalg}\eqref{i:symbolic-equiv} and the fact that $b_{A_1} = b_{A_2} = 0$,  
        $$\sum_{A \in \K} b_{A} = \sum_{A \in \C'} b_A\geq 2.$$
         We then have
        \[
        \sum_{A \in \C} b'_{A}= b'_{A_1\cup A_2}+\sum_{A \in \K} b'_{A} \geq \sum_{A \in \K} b'_{A}= \sum_{A \in \K} b_{A}\geq 2.
        \] 
        Therefore, by \cref{l:symboliclinalg}\eqref{i:symbolic-equiv}, we have 
        $$ \frac{\by^{\bb}}{y_{A_1 \cup A_2}} = \by^{\bb'} \in {\E_q}^{(2)},$$
        contradicting the minimality of $\by^{\bb}$. Thus $A_1 \cup A_2 \in \DD$.
\end{proof}

Next we show that the monomials $f_X$ described above form a minimal generating set of ${\E_q}^{(2)}$. A combination of  \cref{thm:gens-2nd-symbolic,e:smallX} then shows that 
$$\mingens({\E_q}^2) \subseteq \mingens({\E_q}^{(2)})=  \Omega_q^2.$$

It is worth  highlighting that the minimal generators of ${\E_q}^2$ remain minimal generators of ${\E_q}^{(2)}$, which is unusual when dealing with symbolic powers. Often, the symbolic power $I^{(2)}$ has lower degree generators which divide some of the generators of the ordinary power $I^2$. However, the generators of ${\E_q}^{(2)}$ all have the same degree by \cref{e:smallX}, and the degree variance for a general $I$ is addressed by the $\psi_I$ map.

\begin{theorem}\label{thm:gens-2nd-symbolic}
    For any $q \ge 1$, we have
    \[
    \mingens({\E_q}^{(2)}) =  \Omega_q^2. 
    \]
\end{theorem}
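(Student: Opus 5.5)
The plan is to prove two things: every minimal generator of ${\E_q}^{(2)}$ is of the form $f_X$, and every $f_X$ lies in ${\E_q}^{(2)}$. The first step shows $\Omega_q^2\subseteq {\E_q}^{(2)}$, using the criterion \cref{l:symboliclinalg}\eqref{i:symbolic-equiv}(c). From the last expression in \cref{eq:f-defn}, the exponent vector $\bb$ of $f_X$ is
\[
b_A=2 \text{ if } X\subseteq A,\qquad b_A=1 \text{ if } A\cap X\neq\emptyset \text{ and } X\not\subseteq A,\qquad b_A=0 \text{ if } A\cap X=\emptyset.
\]
Fix $\C\in\CC_q$. Since $\C$ covers $[q]\supseteq X\neq\emptyset$, some member of $\C$ meets $X$. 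If some member contains $X$, it already contributes $2$. Otherwise no single member contains $X$, so at least two members meet $X$, and each contributes $1$. In both cases $\sum_{A\in\C}b_A\ge 2$, so $f_X\in{\E_q}^{(2)}$.

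The second step shows that every $\by^\bb\in\mingens({\E_q}^{(2)})$ equals some $f_X$. Apply \cref{l:claim} to $\DD=\{A: b_A=0\}$.
\begin{itemize}
\item If $\DD=\emptyset$, then all $b_A\ge1$, and $b_{[q]}=2$ by \cref{l:symboliclinalg}\eqref{i:symbolic-min}. Hence $f_{[q]}=\by^{\bj+\be_{[q]}}$ divides $\by^\bb$ by \cref{e:smallX}(3). Since $f_{[q]}\in{\E_q}^{(2)}$ by the first step, minimality forces $\by^\bb=f_{[q]}$.
\item Otherwise $\DD$ has a maximum element $D_0$, and $D_0\ne[q]$ because $b_{[q]}=2$. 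Set $X=\compl{D_0}\neq\emptyset$. We check that $f_X$ divides $\by^\bb$ coordinatewise. If $X\subseteq A$, then $\compl{D_0}\subseteq A$, so \cref{l:symboliclinalg}\eqref{i:bAT} gives $b_A\ge 2$. If $A\cap X\neq\emptyset$, then $A\not\subseteq D_0$, so $A\notin\DD$ by maximality of $D_0$, giving $b_A\ge1$. The remaining coordinates of $f_X$ are $0$.
\end{itemize}
So $f_X\mid\by^\bb$ with $f_X\in{\E_q}^{(2)}$, and minimality gives $\by^\bb=f_X$. This yields $\mingens({\E_q}^{(2)})\subseteq\Omega_q^2$.

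The third step shows each $f_X$ is actually a minimal generator. Suppose some element of ${\E_q}^{(2)}$ properly divides $f_X$. Then some minimal generator properly divides $f_X$, and by the second step that generator is $f_Y$ for some $Y$. But $\deg f_Y=\deg f_X=2^q$ by \cref{e:smallX}(4), so $f_Y=f_X$, contradicting properness. Distinctness of the $f_X$ (\cref{e:smallX}(5)) is not needed for the equality of sets, but it confirms $|\Omega_q^2|=2^q-1$.

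The only step with real content is the second one, identifying a minimal generator with $f_X$ for $X=\compl{D_0}$. The main obstacle there, namely that the zero set $\DD$ is closed under unions, is already handled by \cref{l:claim}. What remains is to check that maximality of $D_0$ combined with \cref{l:symboliclinalg}\eqref{i:bAT} gives exactly the lower bounds defining $f_X$, and I expect this to go through as sketched.
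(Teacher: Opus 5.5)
Your proof is correct. Your first two steps are essentially the paper's: membership of $f_X$ via the cover criterion \cref{l:symboliclinalg}\eqref{i:symbolic-equiv}, and the reverse inclusion via the maximum zero set from \cref{l:claim} combined with \cref{l:symboliclinalg}\eqref{i:bAT}. Your coordinatewise check is a cleaner version of the paper's vector-difference computation.

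The genuine difference is how you establish that each $f_X$ is a \emph{minimal} generator.
\begin{itemize}
\item The paper does this directly, before the reverse inclusion. For every $y_A$ in the support of $f_X$ it shows $f_X/y_A\notin{\E_q}^{(2)}$, because a cover inequality becomes violated: the cover $\{[q]\}$ if $A=[q]$, and otherwise the cover $\{A,\compl{A}\}$, on which the exponents of $f_X$ sum to exactly $2$.
\item You observe that the reverse inclusion only needs $f_X\in{\E_q}^{(2)}$. The paper cites minimality of $f_{[q]}$ at that point, but membership suffices. So you prove the reverse inclusion first and then get minimality from $\deg f_X=2^q$ for all $X$ (\cref{e:smallX}(4)): any minimal generator dividing $f_X$ is some $f_Y$ of the same degree, hence equals $f_X$.
\end{itemize}

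Your route is shorter and avoids the case analysis, but it depends on the special fact that all candidate generators share one degree. The paper's argument is independent of both the reverse inclusion and the degree count. It also records exactly which cover inequalities are tight at each $f_X$, a local polyhedral statement that would still apply where generators of different degrees occur.
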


\begin{proof}  
We show each inclusion separately.
\subsubsection*{Proof of $\Omega_q^2 \subseteq \mingens({\E_q}^{(2)})$} We first  show that $f_X$ is in ${\E_q}^{(2)}$ for any nonempty subset $X$ of $[q]$. If $|X|=1$ then by \cref{e:smallX}, $f_X\in {\E_q}^2\subseteq {\E_q}^{(2)}$ as desired. Now assume $|X|\geq 2$, in which case, $q \ge 2$ as well. Set  
     $$f_X=\by^{\bb} \qwith  \bb=\displaystyle \sum_{ \emptyset \neq A\subseteq [q] } b_A\be_A\in \mathbb N^{2^q-1}.$$
    By \cref{eq:f-defn}, we have
    \begin{equation}\label{eq:bA}
       b_{A} = \begin{cases}
            2 & \text{if } X \subseteq A,\\  
            0 & \text{if } A\cap X= \emptyset,\\
            1& \text{otherwise.} 
        \end{cases}
    \end{equation}
    By \cref{l:symboliclinalg}\eqref{i:symbolic-equiv}, it suffices to show that if $\C \in \CC_q$ is a minimal covering set of $[q]$, then
    \begin{equation}\label{eq:polyhedron}
        \sum_{A \in \C} \bb_A \geq 2.
    \end{equation}
    Fix $\C \in \CC_q$. If $X \subseteq A$ for some $A \in \C$, then  \cref{eq:polyhedron} holds by \cref{eq:bA}, so we may assume that $X \not\subseteq A$ for all $A \in \C$.  Therefore, since $|X|\geq 2$ and $\C$ covers $[q]$, at least two different sets, say $A_1, A_2 \in \C$, intersect non-trivially with $X$, i.e., $ A_1\cap X\neq \emptyset$ and $A_2\cap X\neq \emptyset$. Thus $b_{A_1}, b_{A_2} \ge 1$, and the inequality in  \cref{eq:polyhedron} follows, concluding the argument that $f_X \in {\E_q}^{(2)}$.   
    
    Next we show that 
      $f_X$ is a {\it minimal} generator of ${\E_q}^{(2)}$. We do so by showing that 
       $$\frac{f_X}{y_A}\notin {\E_q}^{(2)}
         \qforall y_A \in \supp(f_X).$$
    Fix $y_A \in \supp(f_X)$ and
   let $\by^{\bb'}=\frac{f_X}{y_A}$, so $\bb' = \bb - \be_A$. Using  \cref{eq:bA}, there are two scenarios to consider.
   \begin{itemize}
   \item If $A=[q]$, then $X \subseteq A$ and $b_A'=b_A-1=2-1=1$. 
   Since $\{A\}=\{[q]\} \in \CC_q$, using \cref{l:symboliclinalg}\eqref{i:symbolic-equiv}  we conclude that  $\by^{\bb'}\not\in {\E_q}^{(2)}$.
   
   \item If $A \ne [q]$, since $y_A \in \supp(f_X)$, we have $A \cap X \neq  \emptyset$.  By \cref{eq:bA}, if $X\subseteq A$, then $b_A=2$ and $b_{\compl{A}}=0$, and if $X \not \subseteq A$, then since both $A \cap X$ and $\compl{A} \cap X$ are nonempty, then $b_A=b_{\compl{A}}=1$. Hence  
    $$
    b'_{A} + b'_{\compl{A}} = 
    b_{A} - 1 + b_{\compl{A}} = 2 - 1 < 2.
    $$
    Since $\{A,\compl{A}\} \in \CC_q$, by \cref{l:symboliclinalg}\eqref{i:symbolic-equiv} we must have $\by^{\bb'}\not\in {\E_q}^{(2)}$.
  \end{itemize}
Thus $f_X$ is a minimal generator of ${\E_q}^{(2)}$.

\subsubsection*{Proof of $\mingens({\E_q}^{(2)}) \subseteq  \Omega_q^2$} 
     Let $$\by^{\tilde{\bb}} \in \mingens({\E_q}^{(2)}) \qwith  \tilde{\bb}=\displaystyle \sum_{ \emptyset \neq A\subseteq [q] } \tilde{b}_A\be_A\in \mathbb N^{2^q-1} .$$  
     
     Assume $f_{[q]} \mid \by^{\tilde{\bb}}$. Since $f_{[q]} \in \Omega_q^2$ is a minimal generator of ${\E_q}^{(2)}$ by the inclusion above, then $\by^{\tilde{\bb}} =f_{[q]} \in \Omega_q^2$  as desired. 
    Assume from now on that (see \cref{e:smallX})    
     $$f_{[q]}=\by^{\bj+\be_{[q]}} \nmid \by^{\tilde{\bb}}.$$
      It follows that  either $\tilde{b}_{X'} = 0$ for some nonempty proper subset $X'$ of $[q]$, or $\tilde{b}_{[q]}<2$. However, $\tilde{b}_{[q]}=2$ by \cref{l:symboliclinalg}\eqref{i:symbolic-min},   so there exists an $X' \not\in\{\emptyset, [q]\}$ with 
      $\tilde{b}_{X'} = 0$. By \cref{l:symboliclinalg}\eqref{i:symbolic-min} each $\tilde{b}_A \leq 2$, so we set 
      $$
      \DD \coloneqq \{ A\subseteq [q]\colon \tilde{b}_A=0 \} \qand \DD' \coloneqq \{ A\subsetneq [q] \colon  \tilde{b}_A=2  \},$$
      and write
    $$
        \tilde{\bb} = \bj + \be_{[q]} + \sum_{A \in \DD'}\be_A - \sum_{A \in \DD}\be_A.
    $$
        Note that $X' \in \DD$, so $\DD \neq \emptyset$. Since $\by^{\tilde{\bb}} \in \mingens({\E_q}^{(2)})$ by \cref{l:claim}, $\DD$ has a maximum element $X$ with respect to inclusion. Set $f_{\compl{X}} = \by^{\tilde{\bb}'}$. We then have (see \cref{eq:f-defn})
       \begin{align*}
        \tilde{\bb}-\tilde{\bb}' &= 
        \Big( \bj +\be_{[q]} + \sum_{A\in \DD'} \be_A -\sum_{A\in \DD} \be_A   \Big)- 
        \Big(\bj +\be_{[q]} + \sum_{\compl{X}\subseteq A} \be_A -\sum_{A\subseteq X} \be_A  \Big )\\
        &=\Big( \sum_{A\in \DD'} \be_A-\sum_{\compl{X}\subseteq A} \be_A \Big) + 
        \Big(\sum_{A\subseteq X} \be_A-\sum_{A\in \DD} \be_A\Big). 
    \end{align*}
Note that 
$$
 \Big(\sum_{A\subseteq X} \be_A-\sum_{A\in \DD} \be_A\Big) \geq \mathbf{0}
 $$ 
 because if $A\in \DD$, then $A\subseteq X$ since $X$ is a maximum element of $\DD$. 
 For the other difference of sums, if $\compl{X}\subseteq A$, then  by \cref{l:symboliclinalg}\eqref{i:bAT}, since $\tilde{b}_X=0$  we must have  $\tilde{b}_A\geq 2$, which implies that  $A \in \DD'$. Hence  
 $$
 \Big( \sum_{A\in \DD'} \be_A-\sum_{\compl{X}\subseteq A} \be_A \Big)\geq \mathbf{0}.
 $$
Therefore, $\tilde{\bb}-\tilde{\bb}' \geq \mathbf{0}$, and so $f_{\compl{X}} =\by^{\tilde{\bb}'}\divides \by^{\tilde{\bb}}$. By assumption, $\by^{\tilde{\bb}}$ is a minimal generator of ${\E_q}^{(2)}$, and $f_{\compl{X}} \in {\E_q}^{(2)}$ by the first inclusion, so $\by^{\tilde{\bb}} = f_{\compl{X}} \in \Omega_q^2$ as desired.
\end{proof}

We now illustrate how  \cref{thm:gens-2nd-symbolic} can be used to find generators of the second symbolic power of a square-free monomial ideal $I$ using the edge ideal of a triangle. This example also shows that while the generators of ${\E_q}^{(2)}$ have the same degree, this can change under the map $\psi_I$ and images of minimal generators need not be minimal generators.

\begin{example}
Let $q=3$, $X = \{1,2,3\}$, and $Y=\{1,2\}$.  By definition (see \cref{eq:f-defn}),
    \begin{align*}
        f_X &=f_{\{1,2,3\}} = y_{_1} y_{_2} y_{_3} y_{_{12}} y_{_{13}} y_{_{23}} y_{_{123}}^2, \\
        f_Y &= f_{\{1,2\}} = y_{_1} y_{_2} y_{_{12}}^2 y_{_{13}} y_{_{23}} y_{_{123}}^2 = \e_1\e_2 .
    \end{align*}   
    By \cref{thm:gens-2nd-symbolic}, $f_{\{1,2,3\}} \in {\E_3}^{(2)}$ but $f_{\{1,2,3\}} \not \in {\E_3}^{2}$.

    Consider the monomial ideal $I = (x_1 x_2, x_1 x_3, x_2 x_3)$. Then the map $\psi_I$ is given by 
    $$
    \psi_I(y_A) = \begin{cases}
        x_1 &\text{if } A=\{1,2\} \\
        x_2  &\text{if } A=\{1,3\}\\
        x_3  &\text{if } A=\{2,3\} \\
        1 &\text{otherwise}.
    \end{cases}
    $$
    Then $\psi_I(f_{\{1,2,3\}}) = x_1 x_2 x_3 \in I^{(2)}$, but $\psi_I(f_{\{1,2,3\}}) \not \in I^2$ since $\psi_I(f_{\{1,2,3\}})$ is a monomial of degree $3$. By \cref{thm:gens-2nd-symbolic}, $f_{\{1,2\}}$ and $f_{\{1,2,3\}}$ are both minimal generators of ${\E_3}^{(2)}$, however $\psi_I(f_{\{1,2\}})=x_1^2x_2x_3$, which is a multiple of $\psi_I(f_{\{1,2,3\}})$. So while $\psi_I(f_{\{1,2\}}) \in I^{(2)}$, it is no longer a minimal generator.
\end{example}

Note that an alternate approach to determining when $I^{(2)}$ does not have additional generators appears in \cite{RTY} using generalized triangles.

By \cref{c:sdefectbound}, we know that the second symbolic defect of $\E_q$ provides a bound on the symbolic defect of any square-free monomial ideal with $q$ generators. As a consequence of \cref{thm:gens-2nd-symbolic}, we are able to make this bound precise for the second symbolic defect.

\begin{corollary}\label[corollary]{c:explicit-bound}
    For any $q \ge 1$, we have
    \[
    \sdefect(2,\E_q) = 2^q -1-q-\binom{q}{2}.
    \]
\end{corollary}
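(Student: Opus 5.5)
The plan is to count minimal generators of the quotient ${\E_q}^{(2)}/{\E_q}^2$ using the explicit description in \cref{thm:gens-2nd-symbolic}. Since ${\E_q}^{(2)}$ and ${\E_q}^2$ are monomial ideals with ${\E_q}^2\subseteq {\E_q}^{(2)}$, the number $\mu({\E_q}^{(2)}/{\E_q}^2)$ equals the number of minimal monomial generators of ${\E_q}^{(2)}$ that do not lie in ${\E_q}^2$. First I will justify this standard fact. The images of $\mingens({\E_q}^{(2)})$ generate the quotient. Those lying in ${\E_q}^2$ map to zero. The remaining ones form a minimal generating set of the quotient, because a monomial minimal generator outside ${\E_q}^2$ cannot be written as an $R$-combination of the other such generators modulo ${\E_q}^2$. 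This is seen by comparing the monomial in question in a multigraded relation: it would have to be a multiple of another minimal generator or of an element of ${\E_q}^2$, and the latter is impossible since it is not in ${\E_q}^2$.

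Next, by \cref{thm:gens-2nd-symbolic}, $\mingens({\E_q}^{(2)})=\Omega_q^2=\{f_X\colon \emptyset\ne X\subseteq[q]\}$. By \cref{e:smallX}(5), these are $2^q-1$ distinct monomials. By \cref{e:smallX}(6), $f_X\in{\E_q}^2$ exactly when $|X|\le 2$. Hence the generators outside ${\E_q}^2$ are the $f_X$ with $|X|\ge 3$.

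Finally I count them. There are $2^q-1$ nonempty subsets, of which $q$ have size one and $\binom q2$ have size two. This gives $2^q-1-q-\binom q2$, which is the claimed value, including the degenerate cases $q\le 2$, where the count is $0$.

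The only step requiring real care is the first one, namely that the minimal number of generators of the quotient is exactly the number of minimal generators outside ${\E_q}^2$. I will argue it with the $\mathbb{N}^{2^q-1}$-grading: in each multidegree the quotient is either $0$ or one-dimensional. The rest follows directly from the earlier results.
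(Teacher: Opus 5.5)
Your proposal is correct and follows the paper's proof: you combine \cref{thm:gens-2nd-symbolic} with \cref{e:smallX}(5),(6) to see that ${\E_q}^{(2)}/{\E_q}^2$ is minimally generated by the $f_X$ with $|X|\ge 3$, and then count them. Your extra justification that $\mu({\E_q}^{(2)}/{\E_q}^2)$ equals the number of minimal monomial generators of ${\E_q}^{(2)}$ lying outside ${\E_q}^2$ (via the multigrading and graded Nakayama) is sound, and it is a step the paper leaves implicit.
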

\begin{proof}    
    By~\cref{thm:gens-2nd-symbolic} we know $\mingens({\E_q}^{(2)})=\Omega_q^2$. By \cref{e:smallX}, $f_X \in {\E_q}^2$ if and only if $|X| \leq 2$. In particular, $\frac{{\E_q}^{(2)}}{{\E_q}^2}$ is minimally generated by $\{f_X \st |X| > 2\}$, hence 
    $$
        \sdefect(2, \E_q) = 2^q - 1 - q - \binom{q}{2},
    $$
    as desired.
\end{proof}

Similar to the scenario for integral closures described in \cref{p:not normal}, we identify elements that are in the symbolic power but not the ordinary power for extremal ideals. These elements will be used later to test the equality of ordinary and symbolic powers for square-free monomial ideals via the map $\psi_I$.  

\begin{proposition}
\label[proposition]{p:not normally torsion free}
    Let $q\ge 3$ and $r \ge 2$ be integers. Then 
    \begin{equation}\label{eq:newhrq}
        h(r,q)=\by^{\bj+\be_{[q]}} \e_1^{r-2} \in {\E_q}^{(r)}\smallsetminus {\E_q}^r.
    \end{equation}
\end{proposition}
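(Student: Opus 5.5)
Note first that $\by^{\bj+\be_{[q]}}=f_{[q]}$ by \cref{e:smallX}(3), and that $f_{[q]}\in {\E_q}^{(2)}$ by \cref{thm:gens-2nd-symbolic}. The proof splits into membership in ${\E_q}^{(r)}$ and non-membership in ${\E_q}^r$.

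\emph{Membership.} Every minimal prime of $\E_q$ has the form $P=(y_A \st A\in\C)$ with $\C\in\CC_q$, and ${\E_q}^{(s)}=\bigcap_P P^s$. Since $f_{[q]}\in P^2$ and $\e_1\in \E_q\subseteq P$, we get $f_{[q]}\e_1^{r-2}\in P^2P^{r-2}=P^r$ for every such $P$. Hence $h(r,q)\in{\E_q}^{(r)}$. Equivalently, ${\E_q}^{(2)}{\E_q}^{(1)\,r-2}\subseteq{\E_q}^{(r)}$.

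\emph{Non-membership.} We use \cref{rem:in-or-out}(2): for each $\ba\in\Nrq$ we exhibit a set $A$ with $b_A<|\ba|_A$, where $\by^\bb=h(r,q)$. The exponents are
\[
b_A=\begin{cases}1 & \text{if } 1\notin A \text{ and } A\neq[q],\\ r-1 & \text{if } 1\in A \text{ and } A\ne[q],\\ r & \text{if } A=[q].\end{cases}
\]
This follows since $\e_1^{r-2}$ contributes $r-2$ exactly on sets containing $1$, and $\bj+\be_{[q]}$ contributes $1$ everywhere plus one more at $[q]$. Fix $\ba\in\Nrq$.
\begin{itemize}
\item If $a_i+a_j\ge 2$ for some $i,j\ge 2$ (including $i=j$), take $A=\{i,j\}$, or $A=\{i\}$ when $i=j$. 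Since $q\ge3$, $A\ne[q]$ and $1\notin A$, so $b_A=1<2\le|\ba|_A$.
\item Otherwise $\sum_{i\ge2}a_i\le 1$, so $a_1\ge r-1\ge1$. Take $A=\{1,j\}$, where $j\ge2$ is the index with $a_j=1$ if one exists, and $A=\{1\}$ otherwise. Then $|\ba|_A=r$. Since $q\ge3$, $A\neq[q]$, so $b_A=r-1<r$.
\end{itemize}

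The main obstacle is the last case: we need $A$ to be a proper subset of $[q]$ to obtain the exponent $r-1$ rather than $r$, and this is exactly where the hypothesis $q\ge3$ enters. The remaining steps are routine bookkeeping of exponents.
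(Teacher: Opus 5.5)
Your proof is correct. The membership half is the paper's argument: the paper writes $h(r,q)\in{\E_q}^{(2)}{\E_q}^{r-2}\subseteq{\E_q}^{(r)}$, and your prime-by-prime check unwinds exactly that. The non-membership half takes a different route.

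The paper splits into cases. For $q\ge4$ it notes that $h(r,q)$ divides $g(r,q)$ and invokes $g(r,q)\notin{\E_q}^r$ from \cref{p:not normal}. For $q=3$, where $g(r,q)$ is unavailable, it uses that ${\E_3}^r$ is generated in the single degree $4r=\deg h(r,3)$. So $h(r,3)$ would have to equal some $\e_1^{n_1}\e_2^{n_2}\e_3^{n_3}$, and comparing exponents of $y_{_1},y_{_2},y_{_3}$ forces $n_1+n_2+n_3\ge r+1$.

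You instead run the \cref{rem:in-or-out}(2) argument directly on $h(r,q)$. This is essentially the paper's argument for $g(r,q)$. It transfers because all your test sets have at most two elements, so they are proper subsets of $[q]$ when $q\ge3$, and on such sets the exponents of $h(r,q)$ and $g(r,q)$ coincide. Your version gives a uniform, self-contained proof with no case split or degree count. It also isolates exactly where $q\ge3$ is needed: only in your second case, since in the first case $1\notin A$ already forces $A\ne[q]$. The paper's route is shorter for $q\ge4$ by reusing \cref{p:not normal}, at the cost of a separate ad hoc argument for $q=3$.

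One wording fix. The condition ``$a_i+a_j\ge2$ (including $i=j$)'' literally reads $2a_i\ge2$ when $i=j$; then $a_i=1$ with $A=\{i\}$ gives only $b_A=|\ba|_A$, not a strict inequality. What you mean, and what your ``otherwise'' clause negates, is $\sum_{i\ge2}a_i\ge2$: either $a_i\ge2$ for some $i\ge2$, or $a_i,a_j\ge1$ for distinct $i,j\ge2$. State it that way.
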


\begin{proof}
    By \cref{thm:gens-2nd-symbolic}, $\by^{\bj+\be_{[q]}}=f_{[q]} \in {\E_q}^{(2)}$, and thus for any $r\geq 2$, we have 
    \[
    h(r,q)=\by^{\bj+\be_{[q]}} \e_1^{r-2} \in {\E_q}^{(2)} {\E_q}^{r-2} \subseteq{\E_q}^{(2)} {\E_q}^{(r-2)} \subseteq  {\E_q}^{(r)},\]
    where the containment ${\E_q}^{(a)}{\E_q}^{(b)}\subseteq {\E_q}^{(a+b)}$ is a direct consequence of the definition of symbolic powers of square-free monomial ideals (\cref{d:symbolic-powers}). It remains to show that $h(r,q)\notin {\E_q}^r$. Indeed, if $q\ge 4$, then recall from \cref{p:not normal} that $g(r,q)$, which is a multiple of $h(r,q)$, is not in ${\E_q}^{r}$. It follows  that $h(r,q) \notin {\E_q}^{r}$, as desired. Now assume $q=3$. Suppose  that $h(r,3)\in {\E_3}^r$. Since \[
    \deg h(r,3) = \deg \by^{\bj+\be_{[3]}}+(r-2)\deg \e_1=8+4(r-2)=4r\] 
    and all minimal monomial generators of ${\E_3}^r$ are of degree $4r$, the assumption $h(r,3)\in {\E_3}^r$ in fact implies $h(r,3)\in \mingens({\E_3}^r)$. In other words, we can set $h(r,3)=\e_1^{n_1}\e_2^{n_2}\e_3^{n_3}$ for some non-negative integers $n_1,n_2,n_3$ where $n_1+n_2+n_3=r$. Observe that $h(r,3)=y_{_1}^{n_1}y_{_2}^{n_2}y_{_3}^{n_3}h'$ for some monomial $h'$ that is not divisible by $y_{_1}$, $y_{_2}$, and $y_{_3}$. We then have 
    \[
    y_{_1}^{n_1}y_{_2}^{n_2}y_{_3}^{n_3}h'=h(r,3)= \by^{\bj+\be_{[3]}}\e_1^{r-2}= y_{_1}^{r-1}y_{_2}y_{_3}h''
    \]
    for some monomial $h''$. Since $h'$ is not divisible by $y_{_1}$, $y_{_2}$, and $y_{_3}$, this forces $n_1\geq r-1$ and $n_2,n_3\geq 1$. This implies that $r=n_1+n_2+n_3\geq r+1$, a contradiction. Thus $h(r,3)\not\in {\E_3}^r$.
\end{proof}

We conclude this section by translating the results above regarding properties of $\E_q$ into statements that hold for any ideal $I$ generated by $q$ square-free monomials. Note that this result can be viewed as a symbolic power analog to \cref{t:general-I-ic}. 

\begin{theorem}\label{t:general-I-symbolic}
Let $I$ be an  ideal in the polynomial ring $R$ with $q \ge 1$ minimal square-free monomial generators, let $h(r,q)$ as described in \cref{eq:newhrq}.
The following hold:
\begin{enumerate}
\item\label{i:p-geq-3-symb} if $q \ge 3$, $r\geq 2$, and $\psi_I(h(r,q))\notin I^r$, then $I^{(r)}\ne I^r$;
\item\label{i:q=2-symb}  if $q\le 2$ then $I^{(r)}=I^r$ for all $r \ge 1$;
\item\label{i:h(2,3)-symb} if $q=3$, then $I^{(2)}= I^2+(\psi_I(h(2,3)))$;
\item\label{i:second-symbolic} $I^{(2)}= (\psi_I(f_X)\colon \emptyset\neq X \subseteq [q])$, where $f_X$ is as described in \cref{eq:f-defn};
\item\label{i:second-symb-defect} $\sdefect(2, I) \leq 2^q - 1 - q - \binom{q}{2}$.
\end{enumerate}
\end{theorem}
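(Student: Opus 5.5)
The plan is to derive each item directly from the results on $\E_q$, transported along $\psi_I$ via \cref{thm:symbolic-extremal} and \cref{l:Lr-extremal}(3); the proof mirrors that of \cref{t:general-I-ic}.

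For \eqref{i:p-geq-3-symb}, \cref{p:not normally torsion free} gives $h(r,q)\in{\E_q}^{(r)}$. Hence $\psi_I(h(r,q))\in\psi_I({\E_q}^{(r)})R=I^{(r)}$ by \cref{thm:symbolic-extremal}. If this element is not in $I^r$, then $I^{(r)}\ne I^r$.

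Item \eqref{i:q=2-symb} with $q=2$ is exactly \cref{c:E2-symbolic}. For $q=1$, the ideal $I=(m_1)$ is principal and square-free, so $I=\bigcap_{x\mid m_1}(x)$. Then $I^{(r)}=\bigcap_{x\mid m_1}(x^r)=(m_1^r)=I^r$. Alternatively, $\E_1=(y_1)$ is a prime, and the result transports via $\psi_I$.

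For \eqref{i:second-symbolic}, \cref{thm:gens-2nd-symbolic} gives ${\E_q}^{(2)}=(f_X\colon\emptyset\ne X\subseteq[q])$. Since $\psi_I$ is a ring map, $\psi_I$ of an ideal generates the ideal generated by the images of its generators. So by \cref{thm:symbolic-extremal}, $I^{(2)}=\psi_I({\E_q}^{(2)})R=(\psi_I(f_X)\colon \emptyset\ne X\subseteq[q])$.

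Item \eqref{i:h(2,3)-symb} follows from \eqref{i:second-symbolic} with $q=3$. By \cref{e:smallX}(1),(2), the $f_X$ with $|X|\le2$ are exactly the generators $\e_i\e_j$ and $\e_i^2$ of ${\E_3}^2$, whose images generate $I^2$ by \cref{l:Lr-extremal}(2),(3). The remaining generator is $f_{[3]}=\by^{\bj+\be_{[3]}}=h(2,3)$, so $I^{(2)}=I^2+(\psi_I(h(2,3)))$. This also agrees with \cref{ex:p3}.

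For \eqref{i:second-symb-defect}, combine \cref{c:sdefectbound} with \cref{c:explicit-bound}. This gives $\sdefect(2,I)\le\sdefect(2,\E_q)=2^q-1-q-\binom q2$.

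No step is a real obstacle. The only point needing care is the convention that $\psi_I(J)$ denotes $\psi_I(J)R$, so that images of generating sets generate the image ideal. The $q=1$ case should also be checked separately in \eqref{i:q=2-symb} and \eqref{i:second-symb-defect}; in the latter the bound is $0$ for $q\le 2$, which is consistent with \eqref{i:q=2-symb}.
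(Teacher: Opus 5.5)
Your proposal is correct and follows the paper's proof essentially item by item. Each statement comes from transporting the corresponding fact about $\E_q$ (\cref{p:not normally torsion free}, \cref{c:E2-symbolic}, \cref{thm:gens-2nd-symbolic}, \cref{c:sdefectbound,c:explicit-bound}) along $\psi_I$ via \cref{thm:symbolic-extremal} and \cref{l:Lr-extremal}; the only cosmetic differences are that you deduce (3) from (4) rather than directly from ${\E_3}^{(2)}={\E_3}^2+(f_{[3]})$, and that you check $q=1$ in (2) by explicit computation rather than just noting that $\E_1$ is principal.
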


\begin{proof}
    Since $h(r,q)\in {\E_q}^{(r)}$ by \cref{p:not normally torsion free}, \cref{thm:symbolic-extremal} implies that $\psi_I(h(r,q))\in I^{(r)}$. Therefore Statement (\ref{i:p-geq-3-symb}) follows. 
    
    If $q=2$, Statement (\ref{i:q=2-symb}) is exactly \cref{c:E2-symbolic}. On the other hand, if $q=1$, $\E_1$  is a principal ideal, and thus Statement (2) holds. 
    
    By \cref{eq:mingens-symbolic2}, we have ${\E_3}^{(2)}={\E_3}^2+(f_{[3]})={\E_3}^2+(h(2,3))$. Statement (3) then follows from \cref{thm:symbolic-extremal} and \cref{l:Lr-extremal}(3).

    By \cref{thm:gens-2nd-symbolic}, we have ${\E_q}^{(2)}=  (f_X\colon \emptyset\neq X \subseteq [q])$. Statement (\ref{i:second-symbolic}) then follows by applying the ring homomorphism $\psi_I$ to this equality and using \cref{thm:symbolic-extremal}.
    
    Finally, Statement (\ref{i:second-symb-defect}) follows from \cref{c:sdefectbound,c:explicit-bound}.
\end{proof}

\section{\bf Free resolutions and Betti numbers via $\psi_I$} \label{sec:betti-numbers}
In this section we revisit a result from \cite{Lr} stating that the Betti numbers of $\Erq$ bound the Betti numbers of the powers $I^r$ for any ideal $I$ generated  by $q$ square-free monomials. This result has been further extended in \cite{extremal}, where a multi-graded version is given. The main application is the computation of Betti numbers of $\Erq$, leading to explicit bounds on the Betti numbers of $I^r$, for small values of $q$ or $r$ (\cite{CDFHMS25,extremal}). The results presented here stem from the observation that, in view of the results of this paper, the same methods as in the proof of \cite[Theorem 3.8]{extremal} apply to show that the Betti numbers of $\overline{I^r}$, respectively $I^{(r)}$, are also bounded by the Betti numbers of $\overline{\Erq}$, respectively ${\E_q}^{(r)}$. 

Let $S=\sfk[z_1, \dots, z_s]$ with $\sfk$ a field be a polynomial ring, endowed with a (not necessarily standard) grading. An exact
sequence $F_\bullet$ of free $S$-modules is a {\bf graded free resolution} of a monomial ideal $J$ if it has the form
\begin{equation} \label{graded free}
 0 \to F_d \stackrel{\partial_d}{\longrightarrow} \cdots \to F_i
\stackrel{\partial_i}{\longrightarrow} F_{i-1} \to\cdots \to F_1
\stackrel{\partial_1}{\longrightarrow} F_0
\end{equation}
 where $J \cong
F_0/\im(\partial_1)$, and each map $\partial_i$ is graded, in the
sense that it preserves the degrees of homogeneous elements.
The free resolution in \cref{graded free} is called {\bf minimal} if $\partial_i(F_i) \subseteq (z_1,\ldots,z_s) F_{i-1}$ for every
$i>0$. For each $i\ge 0$, the {\bf Betti number} $\beta_i^S(J)$ is defined as the rank of the free $S$-module $F_i$ when $F_\bullet$ is a minimal resolution of $J$. 

The definition of Betti numbers can be further refined to take into account the grading of $S$. We describe below the notation used in the case when $S$ is endowed with the standard $\mathbb Z^s$-multi-grading. In this case, 
since $\mathbb N^s$ is in one-to-one correspondence with the set of all monomials in $S$, the multi-degree of a monomial $\m$ will be considered to be the monomial itself. Then each $F_i$ can be written as a direct sum of one-dimensional graded free $S$-modules, each of which is denoted $S(\m)$ for some monomial $\m$, indicating the basis is in multi-degree $\m$. Let $\LCM(J)$ denote the lcm-lattice of $J$. Thus, when \cref{graded free} is minimal, 
$$F_i \cong \bigoplus_{\m \in \LCM(J)} S(\m)^{\beta_{i,\m}^S(J)}$$ where the
$\beta_{i,\m}^S(J)$ are the {\bf multigraded Betti numbers} of $J$, which
are invariants of $J$.  

To state our results, we present a general statement that replicates the  idea of \cite[Theorem 3.8]{extremal} and can be specialized to integral closures of powers and symbolic powers. In this more general setting, we consider the ring $R=\sfk[x_1, \dots, x_n]$ and the polynomial ring $S$ as above, and a ring homomorphism  $\psi\colon S\to R$. For our applications, the ring $S$ will be the ring $S_{[q]}$ and the ring map $\psi$ will be the ring homomorphism $\psi_I$, with $I$ a fixed ideal generated by $q$ square-free monomials.

For the remainder of this section, $R$ will be assumed to have the standard $\mathbb Z^n$-multi-grading. We need to equip $S$  with a non-standard multi-grading in order for the map $\psi$ to be homogeneous. We define a non-standard grading on $S$ so that the multi-degree of a monomial $\m$ of $S$ is the same as the standard multi-degree of $\psi(\m)$. An $S$-module $M$ equipped with the standard grading will be regraded as described in \cite{extremal} so that it becomes a graded module with respect to the non-standard multi-grading. This grading can then be induced on graded free resolutions. To distinguish between the two gradings, we write $\widetilde S$, $\widetilde M$, $\widetilde F_\bullet$ for the ring $S$, a module $M$, and a free resolution $F_\bullet$ respectively, considered with the non-standard grading discussed above.

\begin{theorem}[{\bf Extremal ideals bound Betti numbers}]\label{t:upperbound}
 Let $\sfk$ be a field, and let $\psi\colon S\to R$ be a ring homomorphism, where $S$ and $R$ are polynomial rings over $\sfk$. Let $\mathcal J$ denote an ideal in $S$ generated by monomials $u_1, \dots, u_q$, and set $J=\psi(\mathcal J)$. If
 \[
 \psi\left(\lcm(u_{i_1}, \dots, u_{i_t})\right)=\lcm(\psi(u_{i_1}), \dots, \psi(u_{i_t}))
 \]
for all $1\le i_1<i_2<\dots<i_t\le q$, then the following hold: 
  \begin{enumerate}
\item If $F_\bullet$ is a multigraded free resolution of $\mathcal J$ over
  $S$, then $\widetilde F_\bullet\otimes_{\widetilde{S}}R$
  is a multigraded free resolution
  of $J$ over $R$, where in the tensor product $R$ is regarded as
  a graded $\widetilde{S}$-module via the homomorphism $\psi$. 
   
\item  If $\m \in \LCM(J)$, then $$\displaystyle \beta_{i, \m}^R(J) \le
  \sum_{\tiny \begin{array}{ll}\bu\in \LCM(\mathcal J)\\
  \psi(\bu)=\m
  \end{array}}
  \beta_{i,\bu}^S(\mathcal J).$$

  \item If $i\ge 0$ then
      $$\beta_i^R(J)\le \beta_i^S(\mathcal J).$$
  \end{enumerate}
  In particular, if $q$ and $r$ are positive integers, $I$ is an ideal in $R$, and $S=S_{[q]}$, then the statements (1)-(3) hold in each of the following cases
\[
(\mathcal J, J)=(\Erq, I^r)\,; \quad (\mathcal J, J)=(\overline{\Erq}, \overline{I^r})\,;\quad (\mathcal J, J)=({\E_q}^{(r)},I^{(r)})\,.
\]
\end{theorem}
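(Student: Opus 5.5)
The plan is to reduce everything to the lcm-lattice criterion for exactness of a tensored complex, in the spirit of \cite[Theorem 3.8]{extremal}. First I regrade $S$ so that $\psi$ is multigraded: give each monomial $\bu$ of $S$ the multidegree of $\psi(\bu)$ in $\mathbb Z^n$. Then I replace $F_\bullet$ by its minimal multigraded resolution, or, for (1), keep $F_\bullet$ arbitrary but use the Taylor-type structure. The key tool is the Bayer--Peeva--Sturmfels / Gasharov--Peeva--Welker criterion. A multigraded complex of free modules supported on monomials resolves a monomial ideal exactly when, for each monomial $\m$, the subcomplex generated in degrees dividing $\m$ is acyclic. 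Equivalently, resolutions of a monomial ideal depend only on its lcm lattice. The hypothesis $\psi(\lcm(u_{i_1},\dots,u_{i_t}))=\lcm(\psi(u_{i_1}),\dots,\psi(u_{i_t}))$ says that $\psi$ induces a join-preserving surjection $\LCM(\mathcal J)\to \LCM(J)$ together with the bottom element. By \cite[Theorem 3.3]{GPW}-type results, this is exactly what is needed to relabel a resolution of $\mathcal J$ into one of $J$.

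For (1), I write $F_\bullet$ in multigraded form, so each basis element has a degree $\bu\in\LCM(\mathcal J)$ and the differentials have monomial coefficients $\bu/\bu'$. Then $\widetilde F_\bullet\otimes_{\widetilde S}R$ is the complex with the same basis, degrees $\psi(\bu)$, and coefficients $\psi(\bu)/\psi(\bu')$. To show exactness, I fix a monomial $\m$ of $R$ and look at the degree-$\m$ strand. It is the $\sfk$-complex spanned by basis elements with $\psi(\bu)\mid \m$. Using lcm-preservation, the set $\{\bu\in\LCM(\mathcal J): \psi(\bu)\mid\m\}$ has a unique maximum $\bu_\m$, namely the lcm of all $u_i$ with $\psi(u_i)\mid\m$. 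Therefore this strand coincides with the strand of $F_\bullet$ in degree $\bu_\m$ (up to the identification of coefficients), which is exact since $F_\bullet$ resolves $\mathcal J$. The same identification in degree $0$ shows that the cokernel is $J$. This strand identification is the main obstacle. I would check it carefully: $\bu\mid\bu_\m$ must be equivalent to $\psi(\bu)\mid\m$ for $\bu\in\LCM(\mathcal J)$, and this uses both directions of the lcm hypothesis.

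For (2) and (3), I take $F_\bullet$ minimal. Then $\widetilde F_\bullet\otimes R$ is a (possibly nonminimal) multigraded resolution of $J$ with $\beta_{i,\bu}^S(\mathcal J)$ copies of $R(\psi(\bu))$ in homological degree $i$. Since a minimal resolution is a direct summand of any resolution, comparing multidegree $\m$ summands gives (2). Summing (2) over $\m$ gives (3).

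Finally, for the three listed pairs I verify the lcm hypothesis and the equality $J=\psi(\mathcal J)R$. The lcm hypothesis is \cref{l:Lr-extremal}(5), which holds for arbitrary monomials of $S_{[q]}$, so it applies to any generating set. The equality $J=\psi(\mathcal J)R$ is \cref{l:Lr-extremal}(3), \cref{thm:integral-closure-extremal}, and \cref{thm:symbolic-extremal}, respectively.
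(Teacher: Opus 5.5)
Your strand computation is correct whenever every basis multidegree of $F_\bullet$ lies in $\LCM(\mathcal J)$. For such $\bu$, the implication $\bu\mid\bu_\m\Rightarrow\psi(\bu)\mid\m$ is where lcm-preservation enters. The converse holds because $\bu$ is an lcm of generators $u_i$ with $\psi(u_i)\mid\m$. This covers minimal and Taylor resolutions, so (2), (3) and the three applications go through exactly as you write.

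However, (1) is stated for an arbitrary multigraded resolution, and there your sentence ``each basis element has a degree $\bu\in\LCM(\mathcal J)$'' is false. Adding a trivial summand $0\to S(-\bu)\xrightarrow{1}S(-\bu)\to 0$ in homological degrees $2,1$ gives another multigraded resolution for any monomial $\bu$. Take $\mathcal J=\Erq$, $\psi=\psi_I$, $\unsure_I(A)=\emptyset$ and $\bu=y_A^{r+1}$. Then $\psi(\bu)=1\mid\m$ for all $\m$, but $\bu\nmid\bu_\m$, since exponents in $\LCM(\Erq)$ are at most $r$. So the two strands are no longer equal. Exactness is not actually lost, but you need one more step. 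Either write $F_\bullet\cong F^{\min}_\bullet\oplus(\text{trivial complexes})$ and note that tensoring preserves this decomposition. Or read off from the minimal case that $\Tor_i^S(\mathcal J,R)=0$ for $i>0$ and $\mathcal J\otimes_S R\cong J$, which then applies to every resolution.

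Also, the appeal to Gasharov--Peeva--Welker is not what carries your proof. Their relabeling theorem is stated for join-preserving maps that are bijective on atoms, which fails here: for instance $m_1m_2=m_3m_4$ can occur. Your direct strand argument is what does the work.

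With that repair, your route differs from the paper's mainly in the order of steps. The paper, following \cite[Theorem 3.8]{extremal}, proves the single homological fact $\Tor_i^S(S/\mathcal J,R)=0$ for $i>0$. It tensors the Taylor resolution of $S/\mathcal J$ with $R$; by lcm-preservation this becomes the Taylor complex on $\psi(u_1),\ldots,\psi(u_q)$. That complex is acyclic even though these images need not minimally generate $J$. Since Tor does not depend on the chosen resolution, (1) then holds for every $F_\bullet$ at once, and (2), (3) follow as in your last step.

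Your approach avoids Tor and gives an explicit identification of each degree-$\m$ strand of $\widetilde F_\bullet\otimes_{\widetilde S}R$ with the strand of $F_\bullet$ in degree $\bu_\m$. This makes the multigraded Betti comparison very transparent. The cost is that it applies directly only to resolutions supported on $\LCM(\mathcal J)$, whereas the paper's argument is uniform in $F_\bullet$.

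Both arguments rest on the same combinatorial point. Lcm-preservation guarantees that the generators $u_i$ with $\psi(u_i)\mid\m$ have an lcm whose image still divides $\m$.
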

 
\begin{proof}
The proof of (1)-(3) is identical to the proof of \cite[Theorem 3.8]{extremal}, and hence we will not replicate it. The key ingredient in the proof is showing that
\[
\Tor_i^S(S/\mathcal J, R)=0 \qforall i>0.
\]
This is done using the Taylor resolution of a monomial ideal and the fact that the map $\psi$ preserves lcm's. 

Finally, we apply (1)-(3) with $S=S_{[q]}$ and $\psi=\psi_I\colon S_{[q]}\to R$, which satisfy the hypotheses by \cref{l:Lr-extremal}(5). If $\mathcal J=\Erq$ then $J=I^r$ by \cref{l:Lr-extremal}(3). If $\mathcal J=\overline{\Erq}$ then $J=\overline{I^r}$ by \cref{thm:integral-closure-extremal} . If $\mathcal J={\E_q}^{(r)}$ then $J=I^{(r)}$ by \cref{thm:symbolic-extremal}.
\end{proof}

\begin{ack}
The research for this paper was partially supported by the SLMath 2024 Summer Research in Mathematics program. Chau is supported by the Infosys Foundation. Faridi's research is supported by NSERC Discovery Grant 2023-05929. Duval’s research is supported by Simons Collaboration Grant 516801.  The authors are grateful to Karen Smith for useful questions and comments.
\end{ack}

\bibliographystyle{plain}
\bibliography{refs.bib}

\end{document}